\numberwithin{equation}{section}
\newcommand{\1}{\mathbf {1}}
\newcommand{\0}{\mathbf {0}}
\newcommand{\Z}{{\mathbb Z}}
\newcommand{\Q}{{\mathbb Q}}
\newcommand{\C}{{\mathbb C}}
\newcommand{\bsa}{\boldsymbol{a}}
\newcommand{\al}{\alpha}
\newcommand{\be}{\beta}
\newcommand{\dl}{\delta}
\newcommand{\gm}{\gamma}
\newcommand{\lm}{\lambda}
\newcommand{\om}{\omega}
\newcommand{\la}{\langle}
\newcommand{\ra}{\rangle}
\newcommand{\ot}{\otimes}
\newcommand{\mfsl}{\mathfrak{sl}}
\newcommand{\Vaff}{V^{\mathrm{aff}}}
\newcommand{\SC}[1]{\Irr(#1)_{\mathrm{sc}}}
\newcommand{\abs}[1]{\lvert{#1}\rvert}
\DeclareMathOperator{\Aut}{Aut}
\DeclareMathOperator{\ch}{ch}
\DeclareMathOperator{\Com}{Com}
\DeclareMathOperator{\Hom}{Hom}
\DeclareMathOperator{\Irr}{Irr}
\DeclareMathOperator{\spn}{span}
\DeclareMathOperator{\supp}{supp}
\DeclareMathOperator{\wt}{wt}
\newtheorem{theorem}{Theorem}[section]
\newtheorem{proposition}[theorem]{Proposition}
\newtheorem{lemma}[theorem]{Lemma}
\newtheorem{remark}[theorem]{Remark}
\newtheorem{hypothesis}[theorem]{Hypothesis}
\newcommand{\orbit}{\mathscr{O}}
\begin{document}

\title[$\Z_k$-code VOAs]
{$\Z_k$-code vertex operator algebras}

\author[T. Arakawa]{Tomoyuki Arakawa}
\address{Research Institute for Mathematical Sciences,
Kyoto University, Kyoto 606-8502, Japan}
\email{arakawa@kurims.kyoto-u.ac.jp}

\author[H. Yamada]{Hiromichi Yamada}
\address{Department of Mathematics, Hitotsubashi University, Kunitachi,
Tokyo 186-8601, Japan}
\email{yamada.h@r.hit-u.ac.jp}

\author[H. Yamauchi]{Hiroshi Yamauchi}
\address{Department of Mathematics, Tokyo Woman's Christian University, 
Suginami, Tokyo 167-8585, Japan}
\email{yamauchi@lab.twcu.ac.jp}

\keywords{vertex operator algebra, parafermion algebra, simple current, code}

\subjclass[2010]{Primary 17B69; Secondary 17B67}

\begin{abstract}
We introduce a simple, self-dual, rational, and $C_2$-cofinite vertex operator algebra of CFT-type 
associated with a $\Z_k$-code for $k \ge 2$
 based on the $\Z_k$-symmetry among the simple current modules for   
the parafermion vertex operator algebra $K(\mathfrak{sl}_2,k)$. 
We show that it is naturally realized  as the commutant of a certain subalgebra in a lattice vertex operator algebra. 
Furthermore, we construct all the irreducible modules 
inside a module for the lattice vertex operator algebra. 
\end{abstract}

\maketitle

\section{Introduction}\label{sec:intro}

The parafermion vertex operator algebra $K(\mathfrak{g},k)$ associated with 
a finite dimensional simple Lie algebra $\mathfrak{g}$ and a positive integer $k$ 
is by definition the commutant of the Heisenberg vertex operator algebra 
generated by the Cartan subalgebra of $\mathfrak{g}$ in 
$L_{\widehat{\mathfrak{g}}}(k,0)$, where $L_{\widehat{\mathfrak{g}}}(k,0)$ is 
the simple affine vertex operator algebra associated with the affine 
Kac-Moody Lie algebra $\widehat{\mathfrak{g}}$ at level $k$. 
In the case where $\mathfrak{g} = \mathfrak{sl}_2$ and $k \ge 2$, 
$K(\mathfrak{sl}_2,k)$ is  isomorphic to 
a minimal series  principal $W$-algebra of type $A$ 
which is a simple, self-dual, rational, and $C_2$-cofinite vertex operator algebra 
of CFT-type \cite{ALY2019},
and has exactly $k$ simple currents 
$M^j$, $j \in \Z_k$, with $\Z_k$-symmetry. 
That is, those simple currents form a cyclic group of order $k$ with respect to 
the fusion product,  
$M^i \boxtimes_{M^0} M^j = M^{i+j}$ for $i, j \in \Z_k$ with $M^0 = K(\mathfrak{sl}_2,k)$. 

In this article we introduce  a vertex operator algebra $M_D$ 
associated with a $\Z_k$-code $D$ of lenght $\ell$. 
Here, a  $\Z_k$-code $D$ is  an additive subgroup of $(\Z_k)^\ell$. 
For each codeword $\xi = (\xi_1,\ldots,\xi_\ell) \in D$, we associate 
the tensor product $M_\xi = M^{\xi_1} \otimes \cdots \otimes M^{\xi_\ell}$ of 
simple current $K(\mathfrak{sl}_2,k)$-modules $M^{\xi_r}$, $1 \le r \le \ell$. 
Then the direct sum $$M_D = \bigoplus_{\xi \in D} M_\xi$$ has a structure of 
an abelian intertwining algebra \cite[Theorem 4.1]{vEMS2017}.
Furthermore, $M_D$ becomes 
a vertex operator algebra 
if each $M_\xi$ has integral conformal weight \cite[Theorem 4.2]{vEMS2017}. 
Being a $D$-graded simple current exrension of $M_\0=K(\mathfrak{sl}_2,k)^{\otimes \ell}$,
the vertex operator algebra $M_D$ is simple, self-dual, rational, $C_2$-cofinite, 
and of CFT-type with central charge $2\ell (k-1)/(k+2)$
(Theorem \ref{thm:MD}).
Such a construction of $M_D$ was initiated in \cite{Miyamoto1996} for the case $k = 2$, 
and  the properties of the vertex operator algebra $M_D$ for $k = 2$ 
have been studied extensively, 
see \cite{DGH1998, LY2008, Miyamoto1998, Miyamoto2004} and the references therein. 
The vertex operator algebra $M_D$ for $k = 3$ 
was constructed by a slightly different 
method in \cite{KMY2000}, 
and its irreducible modules were studied in \cite{Lam2001b}.

We realize the vertex operator algebra
$M_D$
inside a vertex operator algebra $V_{\Gamma_D}$ associated with 
a certain positive definite even lattice $\Gamma_D$. 
Moreover,
every irreducible $M_D$-module 
is explicitly described
inside a module for the lattice vertex operator algebra $V_{\Gamma_D}$.

More precisely,
consider the
lattice vertex operator algebra 
$V_{\sqrt{2}A_{k-1}}$,
which is an extension of 
the vertex operator algebra
$K(\mathfrak{sl}_2,k)\otimes K(\mathfrak{sl}_k,2)$.
There are cosets $N^{(j)}$, $j \in \Z_k$, of $\sqrt{2}A_{k-1}$ in 
the dual lattice
$(\sqrt{2}A_{k-1})^\circ$ 
such that $N^{(i)} + N^{(j)} = N^{(i+j)}$, and $V_{N^{(j)}}$ contains $M^j$. 
For $\xi = (\xi_1,\ldots,\xi_\ell) \in D$, we consider a coset  
$N(\xi) = N^{(\xi_1)} \times \cdots \times N^{(\xi_\ell)}$ of $(\sqrt{2}A_{k-1})^\ell$ in 
$((\sqrt{2}A_{k-1})^\circ)^\ell$. 
The union $\Gamma_D$ 
of those cosets is a positive definite even lattice if and only if $(\xi | \xi) = 0$ 
for all $\xi \in D$ (Lemma \ref{lem:GD}), 
where $(\,\cdot\, | \,\cdot\,)$ is the standard inner product on $(\Z_k)^\ell$.
Then
 $M_D$ is realized as the commutant of $K(\mathfrak{sl}_k,2)^{\otimes \ell}$ 
in the lattice vertex operator algebra $V_{\Gamma_D}$ (Eq. \eqref{eq:M_D}).

We also consider a necessary and sufficient condition on the code $D$ 
for which $\Gamma_D$ is a positive definite odd lattice, 
and $M_D$ is a vertex operator superalgebra. 

Using the representation theory of simple current extensions 
(Section \ref{subsec:irred_VD-modules}), 
we construct all the irreducible $M_D$-modules inside $V_{(\Gamma_D)^\circ}$, 
where $(\Gamma_D)^\circ$ is the dual lattice of $\Gamma_D$ 
(Theorems \ref{thm:contain_irred}, \ref{thm:stable-module_M0}, 
and \ref{thm:decomp_M_D_X}). 
Any linear character $\chi$ of the finite abelian group $D$ naturally induces 
an automorphism of the vertex operator algebra $M_D$. 
We discuss irreducible $\chi$-twisted $M_D$-modules as well. 
In particular, we obtain the number of inequivalent irreducible $\chi$-twisted 
$M_D$-modules (Theorem \ref{thm:count_irred_twisted_mod}).  
We also study the irreducible $M_D$-modules 
in the case where $M_D$ is a vertex operator superalgebra 
 (Theorem \ref{thm:contain_irred_B}). 

The construction of $M_D$ as a commutant of $K(\mathfrak{sl}_k,2)^{\otimes \ell}$ 
in the lattice vertex operator algebra $V_{\Gamma_D}$ 
was previously discussed in \cite{AYY2016}. 
However, the treatment of the simple current $K(\mathfrak{sl}_2,k)$-modules $M^j$ in $V_{N^{(j)}}$, $j \in \Z_k$,  
was slightly different, and the method there is not suitable for 
all the irreducible $K(\mathfrak{sl}_2,k)$-modules in $V_{(\sqrt{2}A_{k-1})^\circ}$. 
In the present paper, we use decompositions of 
certain irreducible $V_{\sqrt{2}A_{k-1}}$-modules (Proposition \ref{prop:Mij_in_V_Nj0d}), 
from which we know how the irreducible $K(\mathfrak{sl}_2,k)$-modules appear in $V_{(\sqrt{2}A_{k-1})^\circ}$ 
(Proposition \ref{prop:appearance_of_Mij}), 
and it enables us to describe  
the irreducible $M_D$-modules inside $V_{(\Gamma_D)^\circ}$. 

This paper is organized as follows. 
Section \ref{sec:preliminaries} is devoted to preliminaries, where we recall 
the representation theory of simple current extensions. 
In Section \ref{sec:Ksl2k}, we review the properties of 
the parafermion vertex operator algebra $K(\mfsl_2,k)$ for later use. 
In Sections 
\ref{sec:coset-N-in-Ncirc},
\ref{sec:dec-V_Nja}, and 
\ref{sec:irred-mod-in-coset}, 
we describe the cosets of $N=\sqrt{2}A_{k-1}$ in $N^\circ=(\sqrt{2}A_{k-1})^{\circ}$, 
and study how irreducible $K(\mfsl_2,k)$-modules appear in the irreducible 
$V_N$-modules. 
The vertex operator algebra $M_D$  
is defined in Section \ref{sec:gamma_D-M_D}. 
In Section \ref{sec:irred_M_D-modules_A}, 
we study the irreducible twisted and untwisted modules for $M_D$, 
including the classification of irreducible modules, 
and realizations of the irreducible modules in $V_{(N^\circ)^\ell}$. 
In Section \ref{sec:irred_M_D-modules_B}, we discuss the irreducible $M_D$-modules 
in the case where $M_D$ is a vertex operator superalgebra. 
Finally, in Section \ref{sec:examples}, we mention some known examples of $M_D$. 
We calculate the minimal norm of elements in each coset of $N$ in $N^\circ$ 
in Appendix A. 

As to the $P(z)$-tensor product $\boxtimes_{P(z)}$ of \cite{HL1994} 
for a vertex operator algebra $V$, 
we only use it with $z = 1$. 
We write $\boxtimes_V$ for $\boxtimes_{P(1)}$, and call it the fusion product. 
We also use $\otimes$ to denote the tensor product of vertex operator algebras 
and their modules as in \cite{FHL1993}.

\noindent
\textbf{Acknowledgments.}
We would like to thank Ching Hung Lam and Hiroki Shimakura for stimulating discussions and helpful advice.  
The first author was partially supported by JSPS KAKENHI grant No.17H01086 and No.17K18724. 
The third author was partially supported by JSPS KAKENHI grant No.19K03409.

\section{Preliminaries}\label{sec:preliminaries}

In this section, we recall some basic properties of simple current extensions 
of vertex operator algebras and their irreducible modules. 
Our notations for vertex operator algebras and their modules are standard 
\cite{FHL1993, FLM1988, LL2004}.

\subsection{Simple current modules}\label{subsec:simple_current}

Let $V$ be a simple, self-dual, rational, and $C_2$-cofinite 
vertex operator algebra of CFT-type. 
Then a fusion product $M \boxtimes_V N$ over $V$ of any $V$-modules 
$M$ and $N$ exists \cite{HL1995c, Li1998}. 
The fusion product is commutative and associative 
\cite[Theorem 3.7]{Huang2005}. 

We denote by $\Irr(V)$ the set of equivalence classes of irreducible $V$-modules. 
Then 
\begin{equation*}
  M^1 \boxtimes_V M^2 
  = \sum_{M^3 \in \Irr(V)} \dim I_V \binom{M^3}{M^1 \ M^2} M^3
\end{equation*}
for $M^1, M^2 \in \Irr(V)$, where 
$I_V \binom{M^3}{M^1 \ M^2}$ is the set of all intertwining operators of type 
$\binom{M^3}{M^1 \ M^2}$.
An irreducible $V$-module $A$ is called a simple current  
if $A \boxtimes_V X$ is an irreducible $V$-module for any $X \in \Irr(V)$. 
A set $\{ A^\alpha \mid \alpha \in D\}$ of simple current $V$-modules indexed 
by a finite abelian group $D$ is said to be $D$-graded if 
$A^\alpha$, $\alpha \in D$, are inequivalent to each other 
with $A^0 = V$ and 
$A^\alpha \boxtimes_V A^\beta = A^{\alpha + \beta}$, $\alpha, \beta \in D$. 
The set $\SC{V}$ of equivalence classes of simple current $V$-modules is 
graded by a finite abelian group \cite[Corollary 1]{LY2008}. 
The inverse of $A \in \SC{V}$ with respect to the fusion product is 
its contragredient module $A'$.
The fusion product by $A \in \SC{V}$ induces a permutation on $\Irr(V)$. 
\begin{equation}\label{eq:permutation}
  X \mapsto A \boxtimes_V X
\end{equation}

For a $V$-module $X$, we denote its conformal weight by $h(X)$, 
which is a rational number \cite[Theorem 11.3]{DLM2000}. 
We define a map $b_V : \SC{V}\times \Irr(V) \to \Q/\Z$ by
\begin{equation}\label{eq:bilinear_b}
  b_V(A,X) = h(A \boxtimes_V X) - h(A) - h(X) + \Z
\end{equation}
for $A \in \SC{V}$ and $X \in \Irr(V)$. 
The map $b_V$ was introduced in \cite[Section 3]{vEMS2017} 
in the case where $\SC{V}=\Irr(V)$, see also \cite[Section 2]{Moeller2016}. 
A proof of the following lemma can be found in \cite[Section 2]{YY2018}. 

\begin{lemma}\label{lem:bV-bilinear-rev}
  Let $A$, $B\in \SC{V}$, and  $X \in \Irr(V)$.

  \textup{(1)} $b_V(A\boxtimes_V B, X) = b_V(A, X)+b_V(B, X)$.

  \textup{(2)} $b_V(A, B\boxtimes_V X) = b_V(A, B)+b_V(A, X)$.
\end{lemma}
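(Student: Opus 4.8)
The plan is to deduce both identities from the braided tensor (ribbon) category structure on the category $\Rep V$ of $V$-modules. Since $V$ is simple, self-dual, rational, $C_2$-cofinite and of CFT-type, this category carries a natural braiding $c_{M,N}\colon M\boxtimes_V N\to N\boxtimes_V M$ and a compatible ribbon twist $\theta$, with $\theta_X=e^{2\pi i\,h(X)}\id_X$ on every simple module $X$ (the twist being $e^{2\pi i L_0}$). The first step is to reinterpret $b_V$. For $A\in\SC{V}$ and $X\in\Irr(V)$ the module $A\boxtimes_V X$ is irreducible, so $\End(A\boxtimes_V X)=\C$ and, in particular, the double braiding (monodromy) $c_{X,A}\circ c_{A,X}$ is a scalar operator on $A\boxtimes_V X$. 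The balancing axiom $\theta_{A\boxtimes_V X}=(\theta_A\boxtimes_V\theta_X)\circ c_{X,A}\circ c_{A,X}$, together with $\theta_A=e^{2\pi i\,h(A)}\id$ and $\theta_X=e^{2\pi i\,h(X)}\id$, then yields
\[
  c_{X,A}\circ c_{A,X}=e^{2\pi i\,b_V(A,X)}\,\id_{A\boxtimes_V X}.
\]
Thus $b_V(A,X)$ is the monodromy charge of $X$ with respect to the invertible object $A$, and the lemma amounts to the bilinearity of the monodromy charge, which I would prove as follows.

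For part (1), note that the fusion of two simple currents is again a simple current, so $A\boxtimes_V B\in\SC{V}$, the module $(A\boxtimes_V B)\boxtimes_V X$ is irreducible, and its monodromy $c_{X,A\boxtimes_V B}\circ c_{A\boxtimes_V B,X}$ is the scalar $e^{2\pi i\,b_V(A\boxtimes_V B,X)}$. Expanding $c_{A\boxtimes_V B,X}$ and $c_{X,A\boxtimes_V B}$ by the two hexagon axioms, each becomes a composite of $c_{A,X}$, $c_{X,A}$, $c_{B,X}$, $c_{X,B}$ and associativity isomorphisms; composing and using functoriality of $\boxtimes_V$, the two occurrences of the $A$-braiding collapse to $c_{X,A}\circ c_{A,X}=e^{2\pi i\,b_V(A,X)}\id$, after which the two occurrences of the $B$-braiding collapse to $c_{X,B}\circ c_{B,X}=e^{2\pi i\,b_V(B,X)}\id$, while the associativity isomorphisms cancel by coherence. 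Comparing scalars gives $b_V(A\boxtimes_V B,X)=b_V(A,X)+b_V(B,X)$ in $\Q/\Z$. Part (2) is the same computation applied to the irreducible module $A\boxtimes_V(B\boxtimes_V X)$: one factors $c_{A,B\boxtimes_V X}$ and $c_{B\boxtimes_V X,A}$ through the hexagons, collapses the $X$-part to $c_{X,A}\circ c_{A,X}=e^{2\pi i\,b_V(A,X)}\id$ and then the $B$-part to $c_{B,A}\circ c_{A,B}=e^{2\pi i\,b_V(A,B)}\id$ (legitimate since $A\boxtimes_V B$ is a simple current, so this double braiding is scalar), and concludes $b_V(A,B\boxtimes_V X)=b_V(A,B)+b_V(A,X)$.

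I expect the only genuinely technical point to be the bookkeeping of the associativity isomorphisms in the hexagon computations — verifying that, after composing the two hexagon identities, all associators cancel so that the monodromy of a fused object factors as the product of the monodromies of its factors. This is a routine coherence argument (the scalars produced are central and slide freely, and naturality of the associator handles the rest), but it is the step that must be written out with care. A secondary, foundational input is the identification of the ribbon twist of a simple $V$-module with $e^{2\pi i\,h(X)}\id_X$, for which I would cite the construction of the ribbon structure on $\Rep V$; the remaining ingredients — irreducibility of $A\boxtimes_V X$ for $A\in\SC{V}$, Schur's lemma $\End(A\boxtimes_V X)=\C$, and the fact that $\SC{V}$ is closed under fusion — are already available. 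Should one wish to avoid categorical language, the same argument can be carried out directly with intertwining operators, replacing $c$ by the commutativity isomorphisms for iterated intertwining operators and the monodromy scalar by the leading exponent $h(A\boxtimes_V X)-h(A)-h(X)$ modulo $\Z$ of the corresponding $z$-expansion; the logical structure is identical.
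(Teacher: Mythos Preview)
Your argument is correct. The paper itself does not supply a proof of this lemma; it simply refers the reader to \cite[Section~2]{YY2018}. Your approach via the ribbon structure on $\Rep V$---identifying $e^{2\pi i\,b_V(A,X)}$ with the monodromy scalar through the balancing axiom $\theta_{A\boxtimes_V X}=(\theta_A\boxtimes_V\theta_X)\circ c_{X,A}\circ c_{A,X}$ and then deducing multiplicativity from the hexagon identities---is the standard one in the literature on simple currents. The coherence bookkeeping you flag as the delicate point can be sidestepped by Mac Lane strictification: in a strict model the double braiding $c_{X,A\boxtimes_V B}\circ c_{A\boxtimes_V B,X}$ literally equals $(\id_A\boxtimes_V c_{X,B})\circ\bigl((c_{X,A}\circ c_{A,X})\boxtimes_V\id_B\bigr)\circ(\id_A\boxtimes_V c_{B,X})$, and the central scalar $e^{2\pi i\,b_V(A,X)}$ pulls out immediately, leaving $\id_A\boxtimes_V(c_{X,B}\circ c_{B,X})$. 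One small caveat: to invoke the braiding and ribbon twist you need Huang's rigidity and modularity theorems for $\Rep V$, which go beyond the associativity result \cite{Huang2005} already cited in the paper, so you should add the appropriate citation when writing this up.
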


\subsection{Representations of simple current extensions}\label{subsec:irred_VD-modules}

Let $V$ be a simple, self-dual, rational, and $C_2$-cofinite 
vertex operator algebra of CFT-type. 
Let $\{ V^\alpha \mid \alpha \in D \}$ be a $D$-graded set of simple current 
$V$-modules for a finite abelian group $D$ with $V^0 = V$ 
and $h(V^\alpha) \in \frac{1}{2}\Z$ for all $\alpha \in D$. 
Then the direct sum $V_D = \bigoplus_{\alpha \in D}V^\alpha$ 
has a structure of either a simple vertex operator algebra or 
a simple vertex operator superalgebra which extends 
the $V$-module structure on $V_D$ 
\cite[Theorem 3.12]{CKL2015}, see also the references therein. 
Such a simple vertex operator (super)algebra structure on $V_D$ is unique 
\cite[Proposition 5.3]{DM2004}. 
The vertex operator (super)algebra $V_D$ is called 
a $D$-graded simple current extension of $V$. 
In this section, we only consider the case 
in which 
$h(V^\alpha) \in \Z$ for all $\alpha \in D$, and 
$V_D$ is a vertex operator algebra.
It is known that $V_D$ is simple, self-dual, rational, $C_2$-cofinite, 
and of CFT-type \cite[Theorem 2.14]{Yamauchi2004}. 

We recall the representation theory of $V_D$ 
from \cite{Lam2001,Yamauchi2004}.
As to the notion of a $g$-twisted module for a vertex operator algebra 
with respect to its automorphism $g$, 
we adopt the definition in \cite{DLM2000}. 
Thus a $g$-twisted module in \cite{Yamauchi2004} means 
a $g^{-1}$-twisted  module in this paper.

Let $D^* = \Hom(D, \C^\times)$ be the character group of $D$. 
For $\chi \in D^*$, a scalar multiplication by $\chi(\alpha)$ on $V^\alpha$,  
$\alpha \in D$, is an automorphism of the vertex operator algebra $V_D$.  
That is, $D^*$ naturally acts on $V_D$, 
and we can regard $D^*$ as a subgroup of $\Aut V_D$.
Let $M$ be a $\chi$-twisted $V_D$-module for $\chi \in D^*$.
We say $M$ is $D$-graded if there is a decomposition 
$M = \bigoplus_{\alpha\in D} M^\alpha$ as a $V$-module such that 
$0 \ne V^\alpha \cdot M^\beta \subset M^{\alpha+\beta}$ for 
$\alpha$, $\beta\in D$,
where we set 
$V^\alpha \cdot S = \spn\{ a_{(n)} v \mid a \in V^\alpha, v \in S, n \in \Q\}$
for a subset $S$ of $M$.

We consider the action of $D$ on $\Irr(V)$ in \eqref{eq:permutation}. 
Let $\Irr(V) = \bigcup_{i\in I}\orbit_i$ 
be the $D$-orbit decomposition.   
Using the map $b_V$ in \eqref{eq:bilinear_b}, 
we define a map $\chi_X : D\to \C^\times$ by
\begin{equation*}
  \chi_X(\alpha)=\exp(2\pi\sqrt{-1}\, b_V(V^\alpha,X))
\end{equation*}
for $X \in \Irr(V)$. 
The map $\chi_X$ is a linear character of $D$ by (1) of Lemma \ref{lem:bV-bilinear-rev}. 
For a $D$-orbit $\orbit_i$, 
(2) of Lemma \ref{lem:bV-bilinear-rev} implies that $\chi_X$ 
is independent of the choice of $X \in \orbit_i$, 
as $h(V^\alpha) \in \Z$ for all $\alpha \in D$. 
Thus $\chi_X$ is uniquely determined by $\orbit_i$,  
so we can write $\chi_i$ for $\chi_X$. 

We summarize \cite[Theorem 4.4]{Lam2001} and 
\cite[Lemma 2.11, Theorems 2.14, 2.19, 3.2, 3.3]{Yamauchi2004} as follows.

\begin{theorem}\label{thm:induced-module}
Let $V_D$ be a $D$-graded simple current extension of $V$, 
and let $X \in \Irr(V)$. 

\textup{(1)}
There exists a unique structure of a 
$D$-graded $\chi_X$-twisted $V_D$-module on the space 
$V_D\boxtimes_V X = \bigoplus_{\alpha\in D} V^\alpha \boxtimes_V X$ 
which contains $V^0\boxtimes_V X\cong X$ as a $V$-submodule.

\textup{(2)}
If $M = \bigoplus_{\alpha \in D} M^\alpha$ is a $D$-graded $\chi_X$-twisted 
$V_D$-module such that $X \subset M^\alpha$ as a $V$-submodule 
for some $\alpha \in D$, then 
$V_D \cdot X$ is isomorphic to the $D$-graded $\chi_X$-twisted 
$V_D$-module $V_D\boxtimes_V X$ in the assertion \textup{(1)}, where
$V_D \cdot X = \spn\{ a_{(n)} v \mid a \in V_D, v\in X, n \in \Q\}  \subset M$.

\textup{(3)}   
Let $\sigma \in \Aut V_D$ such that $\sigma$ is the identity on $V$. 
Assume that there is a $\sigma$-twisted $V_D$-module containing $X$ 
as a $V$-submodule. 
Then $\sigma = \chi_X$, and 
there exists a surjective $V_D$-homomorphism from $V_D\boxtimes_V X$ onto $V_D \cdot X$.
\end{theorem}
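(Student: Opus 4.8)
The plan is to build the structure of (1) by hand from intertwining operators between $V$-modules, following \cite{Lam2001, Yamauchi2004}, and then to read off (2) and (3) from the universal property of the fusion product. Put $M=\bigoplus_{\alpha\in D}M^\alpha$ with $M^\alpha:=V^\alpha\boxtimes_V X$. Because each $V^\alpha$ is a simple current, associativity and commutativity of the fusion product (\cite[Theorem 3.7]{Huang2005}) give $V^\alpha\boxtimes_V M^\beta\cong(V^\alpha\boxtimes_V V^\beta)\boxtimes_V X=M^{\alpha+\beta}$, so $\dim I_V\binom{M^{\alpha+\beta}}{V^\alpha\ M^\beta}=1$ for every pair $\alpha,\beta$. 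I would pick a nonzero $\mathcal Y^{\alpha,\beta}$ in each of these one-dimensional spaces, normalized so that $\mathcal Y^{0,\beta}=Y_{M^\beta}$ and so that, under the chosen identification $V^0\boxtimes_V X\cong X$, the operators $\mathcal Y^{\alpha,0}$ are the ones induced by the vertex operators of $V_D$; then define $Y_M(a,z)v:=\mathcal Y^{\alpha,\beta}(a,z)v$ for $a\in V^\alpha$, $v\in M^\beta$. A weight computation using \eqref{eq:bilinear_b}, Lemma \ref{lem:bV-bilinear-rev}, and the hypothesis $h(V^\gamma)\in\Z$ (which makes $b_V(V^\alpha,V^\beta)=0$ in $\Q/\Z$) shows $b_V(V^\alpha,M^\beta)=b_V(V^\alpha,X)$ for all $\beta$; hence the $z$-monodromy of $Y_M(a,z)$ acting on all of $M$ is the scalar $\chi_X(\alpha)$, exactly the monodromy required of a $\chi_X$-twisted module.

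The real work is to fix the normalizing scalars of the $\mathcal Y^{\alpha,\beta}$ coherently so that the $\chi_X$-twisted Jacobi identity holds identically. This is a $2$-cocycle problem: the operator product expansion (associativity of intertwining operators) forces these scalars to obey the same associativity constraint as the structure constants of $Y_{V_D}$ itself, so one can transport the normalization already built into $V_D$; commutativity of intertwining operators then supplies the braiding phases, and combined with the fractional powers of $z$ bookkept by $b_V$ these assemble into precisely the twisted Jacobi identity for the automorphism $\chi_X$. The $L(-1)$- and $L(0)$-compatibility, the grading property $0\ne V^\alpha\cdot M^\beta\subset M^{\alpha+\beta}$, and the uniqueness of such a structure containing $V^0\boxtimes_V X$ then follow in a routine way from the basic properties of intertwining operators.

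For (2), suppose $M=\bigoplus_\alpha M^\alpha$ is a $D$-graded $\chi_X$-twisted $V_D$-module with $X\subset M^\gamma$. Restricting the twisted vertex operator to $V^\beta\otimes X$ gives an intertwining operator of $V$-modules of type $\binom{M^{\beta+\gamma}}{V^\beta\ X}$, so by the universal property of the fusion product there is a $V$-module map $V^\beta\boxtimes_V X\to V^\beta\cdot X$. Since $V^\beta$ is a simple current, $V^\beta\boxtimes_V X$ is irreducible, so this map is zero or an isomorphism; it is nonzero because $X$ is faithful as a $V$-module (any nonzero module over a simple vertex operator algebra is faithful) and $a\mapsto Y_{V_D}(a,z)$ is injective, so associativity forces $V^\beta\cdot X\ne0$. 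Hence $V_D\cdot X=\bigoplus_\beta V^\beta\cdot X\cong\bigoplus_\beta V^\beta\boxtimes_V X$ as a $V$-module, and the one-dimensionality of the relevant fusion spaces together with the normalization pinned down by the embedding of $X$ forces the twisted $V_D$-module structure on it to agree with the one constructed in (1). For (3), an automorphism $\sigma$ of $V_D$ that is the identity on $V$ preserves the $V$-isotypic components, hence acts on each $V^\alpha$ by a scalar $\eta(\alpha)$ with $\eta\in D^*$; if a $\sigma$-twisted $V_D$-module contains $X$ as a $V$-submodule, the $z$-monodromy of $Y(a,z)$ for $a\in V^\alpha$ on $V_D\cdot X$ equals both $\eta(\alpha)$ and, by the computation of (1), $\chi_X(\alpha)$, whence $\sigma=\chi_X$; the surjection $V_D\boxtimes_V X\twoheadrightarrow V_D\cdot X$ is then obtained by sending the distinguished copy of $X$ in $V_D\boxtimes_V X$ to $X\subset V_D\cdot X$ and extending by the $V_D$-action, which is well defined since every relation in $V_D\boxtimes_V X$ is a consequence of the $V$-module relations in $X$ and the intertwining relations, all of which hold in the target.

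The main obstacle is the cocycle bookkeeping in (1): choosing the scalars on the $\mathcal Y^{\alpha,\beta}$ so that the twisted Jacobi identity holds exactly, and checking that the accumulated braiding phases reproduce the character $\chi_X$ and nothing else. This is where the integrality $h(V^\gamma)\in\Z$ and the bilinearity of $b_V$ (Lemma \ref{lem:bV-bilinear-rev}) carry the argument; the remaining ingredients are the associativity and commutativity of intertwining operators for simple, rational, $C_2$-cofinite vertex operator algebras of CFT-type, which are available in our setting.
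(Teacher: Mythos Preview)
The paper does not prove this theorem; it is stated as a summary of \cite[Theorem 4.4]{Lam2001} and \cite[Lemma 2.11, Theorems 2.14, 2.19, 3.2, 3.3]{Yamauchi2004}, so there is no in-paper argument to compare against. Your sketch is a faithful outline of the strategy in those references: build the twisted module structure on $\bigoplus_\alpha V^\alpha\boxtimes_V X$ from the one-dimensional intertwining spaces, normalize the scalars using the associativity data already encoded in $V_D$, and read off the monodromy from $b_V$; then deduce (2) and (3) from the universal property of fusion and the uniqueness in (1). The identification of the monodromy with $\chi_X$ via Lemma \ref{lem:bV-bilinear-rev} and the integrality $h(V^\alpha)\in\Z$ is exactly the right mechanism.

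One point deserves more care. In both (2) and (3) you need $V^\beta\cdot X\ne 0$ for every $\beta\in D$, and your justification (``$X$ is faithful and $a\mapsto Y_{V_D}(a,z)$ is injective, so associativity forces $V^\beta\cdot X\ne0$'') is not quite a proof: faithfulness concerns $V^0$, not $V^\beta$. The clean argument is that if $V^\beta\cdot X=0$ then, by the twisted Jacobi identity, for $a\in V^{-\beta}$ and $b\in V^\beta$ every $(a_{(m)}b)_{(n)}v$ with $v\in X$ is a combination of terms $a_{(p)}(b_{(q)}v)=0$ and $b_{(p')}(a_{(q')}v)$; iterating (or using that $\{a_{(m)}b\}$ spans $V^0=V$ since $V_D$ is simple and $V^{-\beta}\boxtimes_V V^\beta=V$) forces $V\cdot X$ to be annihilated, contradicting $X\ne0$. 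With this filled in, your sketch matches the approach of the cited sources; the ``cocycle bookkeeping'' you flag is indeed where the substance lies, and is precisely what \cite{Lam2001,Yamauchi2004} carry out.
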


For a $D$-orbit $\orbit_i$ in $\Irr(V)$, 
the structure of a $D$-graded $\chi_X$-twisted $V_D$-module on the space 
$V_D\boxtimes_V X$ 
in (1) of the above theorem is independent of the choice of $X \in \orbit_i$, 
and it is uniquely determined by $\orbit_i$.
The $\chi_X$-twisted $V_D$-module $V_D\boxtimes_V X$ is not necessarily irreducible. 
The assertion (3) of the above theorem implies that $V_D \cdot X$ is isomorphic to 
a direct summand of $V_D\boxtimes_V X$.

Since any irreducible $\chi$-twisted $V_D$-module for $\chi \in D^*$  
is isomorphic to a direct summand 
of the $\chi_X$-twisted $V_D$-module 
$V_D\boxtimes_V X$ with $\chi = \chi_X$ for some $X \in \Irr(V)$ 
by Theorem \ref{thm:induced-module}, 
the study of $\chi$-twisted $V_D$-modules is reduced to the study of 
the $\chi_X$-twisted $V_D$-module $V_D\boxtimes_V X$. 

Let $D_X = \{ \alpha \in D \mid V^\alpha \boxtimes_V X \cong X \}$ 
be the stabilizer of $X \in \Irr(V)$ for the action of $D$ on $\Irr(V)$ in \eqref{eq:permutation}. 
For a $D$-orbit $\orbit_i$, the stabilizer $D_X$ is independent 
of the choice of $X \in \orbit_i$, and it is uniquely determined by $\orbit_i$. 
Hence we can write $D_i$ for $D_X$.

In the case where $D_X = 0$, the following assertion holds 
\cite[Proposition 3.8]{SY2003}.

\begin{proposition}\label{prop:stable-module}
  If $D_X = 0$, then
  $V_D \boxtimes_V X$ is an irreducible $\chi_X$-twisted $V_D$-module.
\end{proposition}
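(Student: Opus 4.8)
The plan is to use Theorem \ref{thm:induced-module} to reduce the claim to showing that $V_D \boxtimes_V X$ has no proper nonzero $\chi_X$-twisted $V_D$-submodule. Let $M = V_D \boxtimes_V X = \bigoplus_{\alpha \in D} V^\alpha \boxtimes_V X$ with its $D$-graded $\chi_X$-twisted $V_D$-module structure from part (1) of that theorem. Since $D_X = 0$, the hypothesis that $\{V^\alpha \mid \alpha \in D\}$ is $D$-graded and consists of simple currents gives that the summands $V^\alpha \boxtimes_V X$, $\alpha \in D$, are pairwise \emph{inequivalent} irreducible $V$-modules: indeed $V^\alpha \boxtimes_V X \cong V^\beta \boxtimes_V X$ would force $V^{\alpha - \beta} \boxtimes_V X \cong X$, i.e. $\alpha - \beta \in D_X = 0$. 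So as a $V$-module $M$ is a direct sum of $|D|$ distinct irreducibles, each appearing with multiplicity one.

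Now let $0 \ne W \subset M$ be a $\chi_X$-twisted $V_D$-submodule; I want to show $W = M$. First, restricting to $V \subset V_D$, $W$ is a $V$-submodule of $M$, and since $M$ is a multiplicity-one direct sum of distinct irreducible $V$-modules, $W$ must be the sum of a subset of these summands: $W = \bigoplus_{\alpha \in S} (V^\alpha \boxtimes_V X)$ for some nonempty $S \subseteq D$. The key step is then to use the $D$-grading compatibility of the $V_D$-action: by definition of a $D$-graded $\chi_X$-twisted module, $0 \ne V^\beta \cdot (V^\alpha \boxtimes_V X) \subseteq V^{\alpha+\beta} \boxtimes_V X$ for all $\alpha, \beta \in D$. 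Pick any $\alpha \in S$; then for every $\beta \in D$ the submodule $W$ contains the nonzero subspace $V^\beta \cdot (V^\alpha \boxtimes_V X)$, which lies in $V^{\alpha+\beta}\boxtimes_V X$. Since $V^{\alpha+\beta}\boxtimes_V X$ is irreducible as a $V$-module, any nonzero $V$-submodule of it is the whole thing, so $V^{\alpha+\beta}\boxtimes_V X \subseteq W$. As $\beta$ ranges over $D$, the index $\alpha + \beta$ ranges over all of $D$, hence $W \supseteq \bigoplus_{\gamma \in D} V^\gamma \boxtimes_V X = M$, giving $W = M$.

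The one point that needs care — and which I expect to be the main obstacle if one wants to be fully rigorous — is the assertion ``$V^\beta \cdot (V^\alpha \boxtimes_V X) \ne 0$'': a priori the $D$-graded structure only guarantees $0 \ne V^\beta \cdot M^{\gamma} \subseteq M^{\beta + \gamma}$ when the relevant graded piece $M^\gamma$ is itself nonzero, which here it is since $M^\alpha = V^\alpha \boxtimes_V X \ne 0$ for all $\alpha$ (simple currents of a nonzero module are nonzero). So non-vanishing is exactly the content of the ``$0\neq$'' in the definition of $D$-graded twisted module, and there is nothing further to prove. Finally, combining this with Theorem \ref{thm:induced-module}(2)—every $D$-graded $\chi_X$-twisted $V_D$-module containing $X$ in some graded piece is a quotient of $M$, and $M$ being irreducible means any such is either $0$ or $M$—shows that $V_D \boxtimes_V X$ is irreducible as claimed, and in fact is the unique irreducible $\chi_X$-twisted $V_D$-module containing $X$ as a $V$-submodule.
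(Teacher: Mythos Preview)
Your argument is correct and is essentially the standard one: since $D_X=0$ the $V$-module decomposition $V_D\boxtimes_V X=\bigoplus_{\alpha\in D}V^\alpha\boxtimes_V X$ is multiplicity-free, so any $V_D$-submodule is a partial sum over some $S\subseteq D$, and the $D$-grading condition $0\ne V^\beta\cdot M^\alpha\subset M^{\alpha+\beta}$ forces $S=D$. The paper does not give its own proof of this proposition; it simply cites \cite[Proposition 3.8]{SY2003}, and your write-up is precisely the argument that reference contains.

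One minor remark on your closing paragraph: the way you invoke Theorem~\ref{thm:induced-module}(2) is slightly off. That part of the theorem says that if a $D$-graded $\chi_X$-twisted module $M'$ contains $X$, then the submodule $V_D\cdot X\subset M'$ is \emph{isomorphic} to $V_D\boxtimes_V X$; it does not say $M'$ is a quotient of $V_D\boxtimes_V X$. This does yield your uniqueness claim (any irreducible such $M'$ equals $V_D\cdot X\cong V_D\boxtimes_V X$), but via an embedding rather than a surjection. In any case this final sentence is superfluous for the proposition as stated, which only asserts irreducibility, and that you have already established cleanly.
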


If $D_X$ is non-trivial, 
then the $\chi_X$-twisted $V_D$-module $V_D\boxtimes_V X$ is reducible, 
and we need to take some 2-cocycles of $D_X$ 
into account to obtain its irreducible decomposition as discussed in 
\cite{Lam2001,Yamauchi2004}. 
Let $X \in \Irr(V)$, and assume that $D_X \ne 0$. 
We consider the $D_X$-graded simple current 
extension $V_{D_X} = \bigoplus_{\alpha\in D_X} V^\alpha$ of $V$.
Set $V_{\beta+D_X} = \bigoplus_{\alpha \in \beta+D_X} V^\alpha$ 
for a coset $\beta+D_X \in D/D_X$.
Then $V_D = \bigoplus_{\beta+D_X\in D/D_X} V_{\beta+D_X}$ 
is a $D/D_X$-graded simple current extension of $V_{D_X}$.
Note that 
$V_{D_X} \boxtimes_V X \cong X^{\oplus \abs{D_X}}$ as $V$-modules.
Set $Q = \Hom_V(X, V_{D_X} \boxtimes_V X)$.
Then $\dim Q = \abs{D_X}$, and we have a canonical isomorphism
\begin{equation}\label{eq:mult}
  V_{D_X}\boxtimes_V X \cong X\otimes Q.
\end{equation}

It is shown in \cite[Theorem 3.10]{Lam2001} and 
\cite[Theorems 2.14, 2.19]{Yamauchi2004} that there exists a 
2-cocycle $\epsilon \in Z^2(D_X, \C^\times)$ such that the space $Q$ carries 
a structure of a module for a twisted group algebra $\C^\epsilon[D_X]$ 
associated with $\epsilon$ \cite[Chapter 2]{Karpilovsky1993}.
Indeed, $Q$ is isomorphic to the regular representation of 
$\C^\epsilon[D_X]$. 
If $R$ is a $\C^\epsilon[D_X]$-submodule of $Q$, then the subspace $X\otimes R$ 
of $X\otimes Q$ in \eqref{eq:mult} is a $V_{D_X}$-submodule of $V_{D_X}\boxtimes_V X$. 
Thus the irreducible decomposition of $V_{D_X} \boxtimes_V X$ as a $V_{D_X}$-module 
is obtained by the irreducible decomposition 
of $Q$ as a $\C^\epsilon[D_X]$-module.

Let $T$ be an irreducible $V_{D_X}$-submodule of $V_{D_X} \boxtimes_V X$.
Then $T$ is also a direct sum of some copies of $X$ as a $V$-module,  
and $V_{\beta+D_X} \boxtimes_V T$, $\beta+D_X \in D/D_X$, are inequivalent 
irreducible $V_{D_X}$-modules.
Hence the $\chi_X$-twisted $V_D$-module $V_{D }\boxtimes_{V_{D_X}} T$ is irreducible  
by Proposition \ref{prop:stable-module}. 
The $\chi_X$-twisted $V_D$-module structure of  
$V_{D }\boxtimes_{V_{D_X}} T$ is uniquely determined by $T$. 
Therefore, the irreducible decomposition of $V_D\boxtimes_V X$ as a 
$\chi_X$-twisted $V_D$-module is in one-to-one correspondence with 
the irreducible decomposition of $Q$ in \eqref{eq:mult} 
as a $\C^\epsilon[D_X]$-module.

The determination of the 2-cocycle $\epsilon$ requires more information on 
the associativity constraints of the fusion products 
of $V$-modules \cite{Lam2001,Yamauchi2004}. 
However, we will only deal with the case where $D_X$ 
can be regarded as a binary code in this paper. 
So we make the following assumption.

\begin{hypothesis}\label{hypo:binary}
\textup{(1)} 
$M^0$ is a simple, self-dual, rational, and $C_2$-cofinite vertex operator algebra of CFT-type.

\textup{(2)} 
$M^1$ is a self-dual simple current $M^0$-module such that the 
$\Z_2$-graded simple current extension $M^0\oplus M^1$ of $M^0$ 
is either a simple vertex operator algebra with $h(M^1)\in \Z$ 
or a simple vertex operator superalgebra with $h(M^1)\in \Z + 1/2$.

\textup{(3)}
For any irreducible $M^0$-module $P$, the direct sum $P^0 \oplus P^1$ 
with $P^0=P$ and $P^1=M^1 \boxtimes_{M^0} P$ has a unique 
structure of a $\Z_2$-graded either untwisted or $\Z_2$-twisted 
$M^0 \oplus M^1$-module.

\textup{(4)}
$V=(M^0)^{\otimes n}$ for some $n>0$. 

\textup{(5)}
$X \in \Irr(V)$ with $D_X \ne 0$. 
Moreover, $D_X$ has a structure of a binary code of length $n$, and  
$V^\alpha \cong M^{\alpha_1} \otimes \cdots \otimes M^{\alpha_n}$ 
for $\alpha = (\alpha_1,\dots,\alpha_n)\in D_X$. 
In particular, 
\begin{equation*}
  V_{D_X} = \bigoplus_{\alpha=(\alpha_1,\dots,\alpha_n)\in D_X} 
  M^{\alpha_1} \otimes \cdots \otimes M^{\alpha_n}
  \subset (M^0\oplus M^1)^{\otimes n}
\end{equation*}
as an extension of $V = (M^0)^{\otimes n}$.
\end{hypothesis}

Suppose $V_{D_X}$ satisfies Hypothesis \ref{hypo:binary}.
Under this assumption, we can describe the 2-cocycle 
$\epsilon\in Z^2(D_X,\C^\times)$ explicitly.
We divide our argument into two cases.

\medskip

\paragraph{Case 1}
Suppose $M^0\oplus M^1$ is a simple vertex operator algebra with $h(M^1) \in \Z$. 
By (3) of Hypothesis \ref{hypo:binary}, the 2-cocycle 
$\epsilon \in Z^2(D_X,\C^\times)$ is cohomologous to a 2-coboundary by 
\cite[Chapter 2, Corollary 2.5]{Karpilovsky1993}. 
Hence $Q$ is the regular representation of an ordinary group algebra 
$\C[D_X]$, so that $Q$ is a direct sum of $\abs{D_X}$ 
inequivalent irreducible $\C[D_X]$-modules.
Therefore, $V_{D_X} \boxtimes_V X$ decomposes into a direct sum of 
$\abs{D_X}$ inequivalent irreducible $V_{D_X}$-submodules. 
By considering $V_D$ as a $D/D_X$-graded simple current extension of $V_{D_X}$, 
we see that the irreducible decomposition of $V_D\boxtimes_V X$ 
as a $\chi_X$-twisted $V_D$-module is as follows.

\begin{proposition}\label{prop:decomp1}
Suppose $D_X \ne 0$ and $V_{D_X}$ satisfies Hypothesis \ref{hypo:binary}.
Suppose further that $M^0\oplus M^1$ in (2) of Hypothesis \ref{hypo:binary} 
is a simple vertex operator algebra with $h(M^1) \in \Z$.
Then 
the irreducible decomposition of the $\chi_X$-twisted $V_D$-module 
$V_D\boxtimes_V X$ is given as 
\[
  V_D \boxtimes_V X = \bigoplus_{j = 1}^{\abs{D_X}} U^j,
\]
where $U^j$, $1 \leq j \leq \abs{D_X}$, are inequivalent irreducible 
$\chi_X$-twisted $V_D$-modules. 
Furthermore, $U^j \cong \bigoplus_{W \in \orbit_i} W$ as $V$-modules, 
where $\orbit_i$ is the $D$-orbit in $\Irr(V)$ containing $X$.
\end{proposition}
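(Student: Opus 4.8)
The plan is to build the irreducible decomposition in two stages, matching the two-step tower $V \subset V_{D_X} \subset V_D$, and to leverage everything already set up. First I would recall from the discussion preceding the statement that, under Hypothesis \ref{hypo:binary} and in Case 1, the twisted group algebra $\C^\epsilon[D_X]$ acting on $Q = \Hom_V(X, V_{D_X}\boxtimes_V X)$ is (by \cite[Chapter 2, Corollary 2.5]{Karpilovsky1993}) cohomologically trivial, hence isomorphic to the ordinary group algebra $\C[D_X]$; since $D_X$ is a binary code and in particular a finite abelian group, $\C[D_X] \cong \C^{\abs{D_X}}$ as an algebra. Therefore the regular representation $Q$ decomposes as $Q = \bigoplus_{j=1}^{\abs{D_X}} R_j$ into $\abs{D_X}$ pairwise inequivalent one-dimensional $\C^\epsilon[D_X]$-modules. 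Via the canonical isomorphism \eqref{eq:mult}, $V_{D_X}\boxtimes_V X \cong X \otimes Q$, and each $X \otimes R_j$ is an irreducible $V_{D_X}$-submodule; these are pairwise inequivalent because the $R_j$ are, so this gives the irreducible decomposition of $V_{D_X}\boxtimes_V X$ as a $V_{D_X}$-module, with $\abs{D_X}$ inequivalent irreducible summands $T^j := X \otimes R_j$, each of which is a direct sum of $\abs{D_X}$ copies of $X$ as a $V$-module.

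Second, I would promote this to the full extension $V_D$. Recall that $V_D = \bigoplus_{\beta + D_X \in D/D_X} V_{\beta + D_X}$ is a $D/D_X$-graded simple current extension of $V_{D_X}$, and that $D/D_X$ acts freely on each $T^j$ in the sense that the stabilizer $(D/D_X)_{T^j}$ is trivial: indeed if $V_{\beta+D_X}\boxtimes_{V_{D_X}} T^j \cong T^j$ then restricting to $V$ would force $\beta \in D_X$. Hence Proposition \ref{prop:stable-module}, applied with base algebra $V_{D_X}$ and extension $V_D$, shows that each $U^j := V_D \boxtimes_{V_{D_X}} T^j$ is an irreducible $\chi_X$-twisted $V_D$-module, and by the remarks after Theorem \ref{thm:induced-module} its $\chi_X$-twisted $V_D$-module structure is uniquely determined by $T^j$. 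By associativity of the fusion product, $V_D \boxtimes_{V_{D_X}} (V_{D_X}\boxtimes_V X) \cong V_D \boxtimes_V X$, and since $V_{D_X}\boxtimes_V X = \bigoplus_j T^j$ we obtain $V_D \boxtimes_V X = \bigoplus_{j=1}^{\abs{D_X}} U^j$.

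It remains to check that the $U^j$ are pairwise inequivalent and to identify $U^j$ as a $V$-module. For inequivalence I would restrict to $V_{D_X}$: as a $V_{D_X}$-module, $U^j = \bigoplus_{\beta+D_X} V_{\beta+D_X}\boxtimes_{V_{D_X}} T^j$, and the summand containing $T^j$ itself (the $\beta \in D_X$ coset) already distinguishes the $U^j$, since the $T^j$ are inequivalent irreducible $V_{D_X}$-modules and no $V_{\beta+D_X}\boxtimes_{V_{D_X}} T^{j'}$ with $\beta \notin D_X$ can be isomorphic to $T^j$ (again by restriction to $V$, using that the $D$-orbit structure on $\Irr(V)$ separates the $D_X$-isotypic behavior). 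For the $V$-module structure: each $T^j$ is $\abs{D_X}$ copies of $X$, and $V_{\beta+D_X}\boxtimes_{V_{D_X}} T^j$ restricted to $V$ is $\abs{D_X}$ copies of any chosen $W \in \orbit_i$ lying over that coset; since the cosets $\beta + D_X$ are in natural bijection with $\orbit_i$ (because $D_X$ is exactly the stabilizer of $X$), summing over $\beta+D_X \in D/D_X$ gives $U^j \cong \bigoplus_{W \in \orbit_i} (W^{\oplus \abs{D_X}})/\abs{D_X}$; a small bookkeeping argument with dimensions of graded pieces, or a direct count against $V_D\boxtimes_V X \cong \bigoplus_{W\in\orbit_i} W^{\oplus \abs{D_X}}$ split into $\abs{D_X}$ equal pieces, yields $U^j \cong \bigoplus_{W\in\orbit_i} W$ as claimed. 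The main obstacle I anticipate is precisely this last multiplicity bookkeeping — making sure the copies of each $W$ are distributed one-per-$U^j$ rather than clumped — which is handled cleanly by invoking Theorem \ref{thm:induced-module}(1) to see that $V_D\boxtimes_V X$ is $D$-graded with each graded piece $V^\alpha\boxtimes_V X$ irreducible over $V$, together with the fact that each $U^j$ is itself $D$-graded (being an induced module) and must therefore meet every $D$-orbit component.
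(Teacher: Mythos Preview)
Your approach is essentially the same as the paper's: trivialize the cocycle via Hypothesis~\ref{hypo:binary}(3) and \cite[Chapter 2, Corollary 2.5]{Karpilovsky1993}, decompose the regular representation $Q$ of $\C[D_X]$ into $\abs{D_X}$ one-dimensional pieces $R_j$, obtain the irreducible $V_{D_X}$-submodules $T^j = X\otimes R_j$, and then induce up to $V_D$ using Proposition~\ref{prop:stable-module} applied to the $D/D_X$-graded extension $V_D$ of $V_{D_X}$.

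There is one genuine bookkeeping slip. Since each $R_j$ is one-dimensional, $T^j = X\otimes R_j \cong X$ as a $V$-module --- \emph{one} copy of $X$, not $\abs{D_X}$ copies. With this correction, $V_{\beta+D_X}\boxtimes_{V_{D_X}} T^j$ is, as a $V$-module, a \emph{single} copy of the element $W\in\orbit_i$ corresponding to the coset $\beta+D_X$ (the cosets of $D_X$ in $D$ are in bijection with $\orbit_i$ precisely because $D_X$ is the stabilizer of $X$). Summing over $\beta+D_X\in D/D_X$ then gives $U^j \cong \bigoplus_{W\in\orbit_i} W$ immediately, with no need for the awkward ``$(W^{\oplus \abs{D_X}})/\abs{D_X}$'' step. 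Your fallback ``direct count'' argument --- comparing against $V_D\boxtimes_V X \cong \bigoplus_{W\in\orbit_i} W^{\oplus \abs{D_X}}$ split into $\abs{D_X}$ isomorphic pieces --- is correct and gives the same conclusion, but it is unnecessary once the multiplicity of $X$ in $T^j$ is set right.
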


\paragraph{Case 2}
Suppose $M^0\oplus M^1$ is a simple vertex operator 
superalgebra with $h(M^1)\in \Z+1/2$. 
In this case, $D_X$ is an even binary code, as the conformal weight of 
$V^\alpha \cong M^{\alpha_1} \otimes \cdots \otimes M^{\alpha_n}$ 
is an integer for $\alpha = (\alpha_1,\dots,\alpha_n) \in D_X$.
By (3) of Hypothesis \ref{hypo:binary}, 
we can find the 2-cocycle $\epsilon$ inside 
$Z^2(D_X,\{\pm 1\})$ 
which satisfies
\begin{equation}\label{eq:2-cocycle}
  \epsilon(\alpha,\alpha) = (-1)^{\wt(\alpha)/2}, \quad
  \epsilon(\alpha,\beta) \epsilon(\beta,\alpha) = (-1)^{(\alpha|\beta)}
\end{equation}
for $\alpha$, $\beta \in D_X$, 
where $\wt(\alpha)$ is the Hamming weight of $\alpha$,  
and $(\, \cdot\, | \, \cdot \,)$ is the standard inner product on $(\Z_2)^n$ 
\cite[Section 4.1]{LY2008}, see also \cite{Miyamoto1996, Miyamoto1998}.
The conditions above uniquely determine the class of $\epsilon$ in 
$H^2(D_X,\{\pm 1\})$ \cite[Proposition 5.3.3]{FLM1988}.

It is shown in \cite[Theorem 5.5.1]{FLM1988} that each irreducible representation
of $\C^\epsilon[D_X]$ is induced from an irreducible representation of 
its maximal commutative subalgebra, and the equivalence classes 
of irreducible $\C^\epsilon[D_X]$-modules are distinguished by their central characters.
Let $D_X^\perp = \{ \alpha \in (\Z_2)^n \mid (\alpha | D_X) = 0\}$ be the 
dual code of the binary code $D_X$, 
and let $E$ be a maximal self-orthogonal subcode of $D_X$.
It follows from \eqref{eq:2-cocycle} that the center of $\C^\epsilon[D_X]$ 
is $\C^\epsilon[D_X \cap D_X^\perp]$, 
and $\C^\epsilon[E]$ is a 
maximal commutative subalgebra of $\C^\epsilon[D_X]$.
Since $\C^\epsilon[D_X \cap D_X^\perp] \cong \C[D_X \cap D_X^\perp]$ 
is an ordinary group algebra, 
the number of inequivalent irreducible representations of 
$\C^\epsilon[D_X]$ 
is equal to that of $\C[D_X \cap D_X^\perp]$, which coincides with 
the order $\abs{D_X \cap D_X^\perp}$ of $D_X \cap D_X^\perp$.
Each irreducible $\C^\epsilon[D_X]$-module has dimension 
$[D_X:E]=[E:D_X \cap D_X^\perp]$, 
namely, $[D_X : D_X \cap D_X^\perp]^{1/2}$ \cite[Theorem 5.5.1]{FLM1988}.
Since the space $Q$ in \eqref{eq:mult} is isomorphic to 
the regular representation of $\C^\epsilon[D_X]$, 
the irreducible decomposition of $V_D\boxtimes_V X$ 
as a $\chi_X$-twisted $V_D$-module is as follows.

\begin{proposition}\label{prop:decomp2}
Suppose $D_X \ne 0$ and $V_{D_X}$ satisfies Hypothesis \ref{hypo:binary}.
Suppose further that $M^0 \oplus M^1$ in (2) of Hypothesis \ref{hypo:binary} 
is a simple vertex operator superalgebra with $h(M^1)\in \Z+1/2$. 
Then 
the irreducible decomposition of the $\chi_X$-twisted $V_D$-module 
$V_D\boxtimes_V X$ is given as 
\[
  V_D\boxtimes_V X 
  = \bigoplus_{j=1}^{\abs{D_X \cap D_X^\perp}} (U^j)^{\oplus m}, 
\]
where $m = [D_X : D_X \cap D_X^\perp]^{1/2}$, and 
$U^j$, $1 \leq j \leq \abs{D_X \cap D_X^\perp}$, are inequivalent irreducible 
$\chi_X$-twisted $V_D$-modules. 
Furthermore, $U^j \cong \bigoplus_{W \in \orbit_i} W^{\oplus m}$ 
as $V$-modules, where $\orbit_i$ is the $D$-orbit in $\Irr(V)$ containing $X$.
\end{proposition}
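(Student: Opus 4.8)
The plan is to combine the structural facts already assembled about $\C^\epsilon[D_X]$-modules with the representation theory of simple current extensions from Section \ref{subsec:irred_VD-modules}. First I would invoke the canonical isomorphism \eqref{eq:mult}, $V_{D_X}\boxtimes_V X\cong X\otimes Q$, together with the fact that $Q$ is the regular representation of $\C^\epsilon[D_X]$. In the present superalgebra case the cocycle $\epsilon$ lies in $Z^2(D_X,\{\pm1\})$ and satisfies \eqref{eq:2-cocycle}; as recalled just before the statement, this forces the center of $\C^\epsilon[D_X]$ to be $\C^\epsilon[D_X\cap D_X^\perp]\cong\C[D_X\cap D_X^\perp]$, so there are exactly $\abs{D_X\cap D_X^\perp}$ inequivalent irreducible $\C^\epsilon[D_X]$-modules, each of dimension $m=[D_X:D_X\cap D_X^\perp]^{1/2}$. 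Decomposing the regular representation accordingly gives $Q\cong\bigoplus_{j=1}^{\abs{D_X\cap D_X^\perp}}(Q^j)^{\oplus m}$ with $Q^j$ the distinct irreducibles, whence $V_{D_X}\boxtimes_V X\cong\bigoplus_j (X\otimes Q^j)^{\oplus m}$ as $V_{D_X}$-modules, where $T^j:=X\otimes Q^j$ is an irreducible $V_{D_X}$-module (each a direct sum of $m$ copies of $X$ as a $V$-module).

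Next I would pass from $V_{D_X}$-modules to $\chi_X$-twisted $V_D$-modules. Viewing $V_D$ as a $D/D_X$-graded simple current extension of $V_{D_X}$, and noting that the $T^j$ have trivial stabilizer in $D/D_X$ (the modules $V_{\beta+D_X}\boxtimes_{V_{D_X}}T^j$ for $\beta+D_X\in D/D_X$ are pairwise inequivalent, being built from distinct $D$-orbit pieces in $\Irr(V)$), Proposition \ref{prop:stable-module} applies: $U^j:=V_D\boxtimes_{V_{D_X}}T^j$ is an irreducible $\chi_X$-twisted $V_D$-module, and its structure is uniquely determined by $T^j$. Since the fusion product by the simple currents $V^\alpha$ is exact and commutes with the direct sum decomposition, tensoring the decomposition of $V_{D_X}\boxtimes_V X$ up to $V_D$ yields
\[
  V_D\boxtimes_V X \;=\; V_D\boxtimes_{V_{D_X}}(V_{D_X}\boxtimes_V X)
  \;=\;\bigoplus_{j=1}^{\abs{D_X\cap D_X^\perp}}(U^j)^{\oplus m}.
\]
Inequivalence of the $U^j$ follows from the one-to-one correspondence (stated in Section \ref{subsec:irred_VD-modules}) between irreducible summands of $V_D\boxtimes_V X$ as a $\chi_X$-twisted $V_D$-module and irreducible summands of $Q$ as a $\C^\epsilon[D_X]$-module.

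For the $V$-module structure of $U^j$, I would trace through the two extension steps. As a $V_{D_X}$-module, $U^j=\bigoplus_{\beta+D_X\in D/D_X}V_{\beta+D_X}\boxtimes_{V_{D_X}}T^j$; restricting each summand to $V$ and using $T^j\cong X^{\oplus m}$ as a $V$-module together with $V^\alpha\boxtimes_V X$ running over the $D$-orbit $\orbit_i$ as $\alpha$ ranges over $D$, one gets $U^j\cong\bigoplus_{W\in\orbit_i}W^{\oplus m}$ as $V$-modules. A small point to check here is that the multiplicity stays exactly $m$ and does not collapse or inflate on different orbit elements; this is where the simple-current hypothesis is used, since $V^\alpha\boxtimes_V(-)$ is an equivalence on $\Irr(V)$ and hence multiplicity-preserving.

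The main obstacle is the input about $\C^\epsilon[D_X]$ — specifically, that the cocycle class is pinned down by \eqref{eq:2-cocycle} and hence that $Q$ really is the regular representation with the stated irreducible structure. That is precisely the content assembled in the paragraph preceding the proposition (via \cite[Theorem 5.5.1, Proposition 5.3.3]{FLM1988} and Hypothesis \ref{hypo:binary}), so in the proof of this proposition itself the argument is mostly bookkeeping: reconcile the regular-representation decomposition of $Q$ with the simple current extension machinery of Theorem \ref{thm:induced-module} and Proposition \ref{prop:stable-module}, and keep careful track of the multiplicity $m$ through both the $\C^\epsilon[D_X]$-module count and the $D/D_X$-extension step.
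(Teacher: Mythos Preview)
Your proposal is correct and follows essentially the same approach as the paper: the paper's argument is laid out in the discussion immediately preceding the proposition (the decomposition of the regular representation $Q$ of $\C^\epsilon[D_X]$ via \cite[Theorem 5.5.1]{FLM1988}, passage to irreducible $V_{D_X}$-submodules $T^j$, and then induction to $V_D$ as a $D/D_X$-graded simple current extension via Proposition \ref{prop:stable-module}), and the proposition itself is stated without a separate proof. Your write-up simply makes explicit the bookkeeping of multiplicities and the $V$-module structure of $U^j$, exactly along the lines the paper sketches.
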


\section{Parafermion vertex operator algebra $K(\mfsl_2,k)$}
\label{sec:Ksl2k}

In this section, we recall the properties of the parafermion vertex operator algebra $K(\mfsl_2,k)$ for $2 \le k \in \Z$. 
If $k=2$, then $K(\mfsl_2,2)$ is isomorphic 
to the Virasoro vertex operator algebra $L(1/2,0)$ of central charge $1/2$. 
So we assume that $k \ge 3$ for the rest of this section. 

Let $\{ h, e, f\}$ be a standard Chevalley basis of the Lie algebra $\mfsl_2$.   
Let $L_{\widehat{\mfsl}_2}(k,0)$ be the simple affine vertex operator algebra 
associated with $\widehat{\mfsl}_2$ and level $k$.
Then $K(\mfsl_2,k)$ is defined to be the commutant 
of the Heisenberg vertex operator algebra generated by $h(-1)\1$ 
in $L_{\widehat{\mfsl}_2}(k,0)$ \cite{DLWY2010, DLY2009, DL1993}.

We follow the notaions in \cite[Section 4]{DLY2009}. 
Let $L = \Z\al_1 + \cdots + \Z\al_k$ with
$\la \al_i,\al_j \ra = 2\dl_{i,j}$ and $\gm = \al_1 + \cdots + \al_k$. 
Let $H$, $E$, and $F \in V_L$ be as in \cite[Section 4]{DLY2009}. 
Then the component operators $H_{(n)}$, $E_{(n)}$, $F_{(n)}$, $n \in \Z$, 
give a level $k$ representation of $\widehat{\mfsl}_2$ under the
correspondence 
$h(n) \leftrightarrow H_{(n)}$, $e(n) \leftrightarrow E_{(n)}$, $f(n) \leftrightarrow F_{(n)}$,
and the subalgebra $\Vaff$ of the vertex operator algebra 
$V_L \cong L_{\widehat{\mfsl}_2}(1,0)^{\otimes k}$ 
generated by $H$, $E$, and $F$ is isomorphic to $L_{\widehat{\mfsl}_2}(k,0)$. 
We identify $\Vaff$ with $L_{\widehat{\mfsl}_2}(k,0)$.  
We also identify $H_{(n)}$, $E_{(n)}$, and $F_{(n)}$ with $h(n)$, $e(n)$, and $f(n)$, 
respectively.
Let
\begin{equation*}%\label{def:Mj}
  M^j = \{ v \in L_{\widehat{\mfsl}_2}(k,0) \mid H_{(n)}v = -2j \dl_{n,0} v 
\text{ for } n \ge 0 \}.
\end{equation*}
Then $M^0=K(\mfsl_2,k)$, and 
$L_{\widehat{\mfsl}_2}(k,0) = \bigoplus_{j=0}^{k-1} M^j \ot V_{\Z\gm - j\gm/k}$ 
as $M^0 \otimes V_{\Z\gm}$-modules \cite[Lemma 4.2]{DLY2009}. 
The index $j$ of $M^j$ can be considered to be modulo $k$. 

Let $L^\circ = \frac{1}{2}L$ be the dual lattice of $L$, and  
let $v^i$, $0 \le i \le k$, and $v^{i,j}$, $ 0 \le j \le i$, be as in \cite[Section 4]{DLY2009}. 
Then the $\Vaff$-submodule $\Vaff\cdot v^i$ of $V_{L^\circ}$ generated by $v^i$ is
isomorphic to an irreducible $ L_{\widehat{\mfsl}_2}(k,0)$-module 
$L_{\widehat{\mfsl}_2}(k,i)$ with top level $\spn\{ v^{i,j} \mid 0 \le j \le i \}$ 
of conformal weight $i(i+2)/4(k+2)$ \cite{FZ1992}, \cite[Section 6.2]{LL2004}. 
Let
\begin{equation*}
M^{i,j} = \{ v \in \Vaff \cdot v^i \mid H_{(n)} v = (i-2j)\delta_{n,0}v \text{ for } n \ge 0\}
\end{equation*}
for $0 \le i \le k$, $0 \le j \le k - 1$. 
Then 
\begin{equation}\label{eq:dec-LUi}
  L_{\widehat{\mfsl}_2}(k,i) = \bigoplus_{j=0}^{k-1} M^{i,j} \otimes V_{\Z\gm + (i-2j)\gm/2k}
\end{equation}
as $M^0 \otimes V_{\Z\gm}$-modules \cite[Lemma 4.3]{DLY2009}. 
The index $j$ of $M^{i,j}$ can be considered to be modulo $k$.

The $-1$ isometry of the lattice $L$ lifts to an automorphism $\theta$ of 
the vertex operator algebra $V_L$ of order $2$. 
Actually, $\theta(H) = -H$, $\theta(E) = F$, and $\theta(F) = E$. 

We summarize the properties of $M^0 = K(\mathfrak{sl}_2,k)$  
\cite{ALY2014, ALY2019, DLWY2010, DLY2009, DW2016}.
%see also \cite{ADJR2018, DR2017, DW2010, DW2011}.

(1) $M^0$ is a simple, self-dual, rational, and $C_2$-cofinite 
vertex operator algebra of CFT-type with central charge $2(k-1)/(k+2)$. 

(2) $\ch M^0 = 1 + q^2 + 2q^3 + \cdots$.

(3) $M^0$ is generated by its conformal vector $\omega$ and 
a primary vector $W^3$ of weight $3$.

(4) The automorphism group $\Aut M^0$ of $M^0$ is generated by $\theta$, 
and $\theta(W^3) = -W^3$.

(5) The irreducible $M^0$-modules $M^{i,j}$'s are not always inequivalent. In fact, 
\begin{equation}\label{eq:equiv-Mij}
M^{i,j} \cong M^{k-i,j-i}, \quad 0 \le i \le k, 0 \le j \le k-1.
\end{equation}

(6) $M^{i,j}$, $0 \le j < i \le k$, form a complete set of representatives of 
the equivalence classes of irreducible $M^0$-modules. 

(7) The top level of $M^{i,j}$ is a one dimensional space $\C v^{i,j}$, and its weight is 
\begin{equation}\label{eq:top-weight-Mij}
h(M^{i,j}) = \frac{1}{2k(k+2)}\Big( k(i-2j) - (i-2j)^2 + 2k(i-j+1)j \Big)
\end{equation}
for $0 \le j \le i \le k$. 
Note that \eqref{eq:top-weight-Mij} is valid even when $j = i$.
Any irreducible $M^0$-module except for $M^0$ itself has positive conformal weight.

(8) The automorphism $\theta$ of $M^0$ induces a permutation 
$M^{i,j} \mapsto M^{i,j} \circ \theta \cong M^{i,i-j}$ 
on the irreducible $M^0$-modules for $0 \le i \le k$, $0 \le j \le k-1$.

(9) $M^j$, $0 \le j \le k-1$, are the simple currents with $h(M^j) = j(k-j)/k$, and
\begin{equation}\label{eq:fusion_rule_for_M0}
M^{j'} \boxtimes_{M^0} M^{i,j} = M^{i,j+j'}, \quad 0 \le i \le k, 0 \le j, j' \le k-1.
\end{equation}

The following lemma is a consequence of \eqref{eq:equiv-Mij} and \eqref{eq:fusion_rule_for_M0}.

\begin{lemma}\label{eq:exceptional-id}
$M^{j'} \boxtimes_{M^0} M^{i,j} \cong M^{i,j}$ 
if and only if $j' = 0$, or $k$ is even and $j' = i = k/2$.
\end{lemma}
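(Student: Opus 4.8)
The plan is to reduce everything to the fusion rule \eqref{eq:fusion_rule_for_M0} and the list of coincidences \eqref{eq:equiv-Mij}, exploiting the fact (item (6)) that the modules $M^{i,j}$ with $0\le j<i\le k$ are pairwise inequivalent. By \eqref{eq:fusion_rule_for_M0} we have $M^{j'}\boxtimes_{M^0}M^{i,j}=M^{i,j+j'}$, so the statement to prove becomes: $M^{i,j+j'}\cong M^{i,j}$ if and only if $j'=0$, or $k$ is even and $j'=i=k/2$.

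For the ``if'' direction, the case $j'=0$ is trivial, and when $k$ is even with $j'=i=k/2$ I would apply \eqref{eq:equiv-Mij} to the index pair $(k/2,\,j+k/2)$, obtaining $M^{k/2,\,j+k/2}\cong M^{k-k/2,\,(j+k/2)-k/2}=M^{k/2,j}$.

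For the ``only if'' direction, the key step is to pin down, for a fixed first index $i$ with $0\le i\le k$, exactly when $M^{i,a}\cong M^{i,b}$; I claim this happens precisely when $a\equiv b\pmod k$, or $k$ is even, $i=k/2$, and $a\equiv b+k/2\pmod k$. To establish this I would normalize $a,b$ to lie in $\{0,\dots,k-1\}$ and bring each module to its canonical representative $M^{i',j'}$ with $0\le j'<i'\le k$ by a single use of \eqref{eq:equiv-Mij}: if $i=0$ one rewrites $M^{0,a}\cong M^{k,a}$; if $1\le i\le k-1$ and the normalized second index is $\ge i$ one rewrites $M^{i,a}\cong M^{k-i,\,a-i}$; otherwise the pair already lies in the canonical range. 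Since the canonical representatives are pairwise inequivalent, $M^{i,a}\cong M^{i,b}$ forces the two canonical forms to coincide, and inspecting the (at most three) subcases yields exactly the stated dichotomy — the ``mixed'' subcase, in which $M^{i,a}$ is already canonical while $M^{i,b}$ is rewritten as $M^{k-i,\,b-i}$, forces $i=k-i$ and is the sole source of the exceptional identification. Applying the claim with $a=j+j'$, $b=j$, and using $0\le j'\le k-1$, then gives $j'=0$ or ($k$ even and $j'=k/2=i$), as desired.

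The only genuinely delicate point is the normalization bookkeeping in the ``only if'' direction: one has to check that a single application of \eqref{eq:equiv-Mij} always lands inside the range $0\le j'<i'\le k$, and to organize the comparison of canonical forms according to whether the first index is $0$, lies in $\{1,\dots,k-1\}$, or equals $k$. This is elementary, but it is the place where the exceptional case $k$ even, $i=k/2$ must be carefully isolated.
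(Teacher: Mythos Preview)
Your argument is correct and is precisely the approach the paper has in mind: the paper states only that the lemma ``is a consequence of \eqref{eq:equiv-Mij} and \eqref{eq:fusion_rule_for_M0}'', and what you have written is exactly the straightforward unpacking of that claim using the classification in item (6). The case analysis you outline for the ``only if'' direction is routine and goes through without difficulty.
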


Let
\[
  N = \{\al \in L \mid \la \al,\gm \ra = 0 \}. 
\]
Then 
$M^0 = \Com_{\Vaff}(V_{\Z\gamma}) \subset \Com_{V_L}(V_{\Z\gamma}) = V_N$.
The commutant of $\Vaff$ in $V_L$ is isomorphic to 
the parafermion vertex operator algebra $K(\mfsl_k,2)$ \cite{Lam2014}. 
We denote it by $T$. 
Thus $T = \Com_{V_L}(\Vaff) = \Com_{V_N}(M^0) \cong K(\mfsl_k,2)$.

\section{Cosets $N(j,\bsa)$ of $N$ in $N^\circ$}
\label{sec:coset-N-in-Ncirc}

We keep the notations in Section \ref{sec:Ksl2k}. 
In this section, we describe the cosets of $N$ in its dual lattice $N^\circ$.
For $\bsa = (a_1,\ldots,a_k) \in \{ 0,1\}^k$, set 
$\delta_{\bsa} = \frac{1}{2} \sum_{p=1}^k a_p \al_p$. 
Then $L^\circ = \bigcup_{\bsa \in \{ 0,1\}^k} (L + \delta_{\bsa})$ 
is the coset decomposition of $L^\circ$ by $L$. 
Let $\beta_p = \al_p - \al_{p+1}$, $1 \le p \le k-1$,  
so $\{\beta_1, \ldots, \beta_{k-1}\}$ is a $\Z$-basis of $N$. 
Set $R = N \oplus \Z\gamma$. 
Then $R \subset L \subset L^\circ \subset R^\circ$ 
with $R^\circ = N^\circ \oplus (\Z\gm)^\circ$ and 
$(\Z\gm)^\circ = \Z\frac{1}{2k}\gm$.
Let 
\begin{equation*}
\lm_k
= \frac{1}{2k} (\beta_1 + 2\beta_2+ \cdots + (k-1)\beta_{k-1})\\
= \frac{1}{2k}\gm - \frac{1}{2}\al_k.
\end{equation*}
Then $\la \beta_p, \lm_k \ra = \delta_{p,k-1}$, $1 \le p \le k-1$, and 
$\la \lm_k, \lm_k \ra = \frac{1}{2} - \frac{1}{2k}$. 
The following lemma holds.

\begin{lemma}\label{lem:dual_N}
\textup{(1)} 
$\{ \beta_2/2, \ldots, \beta_{k-1}/2, \lm_k \}$
is a $\Z$-basis of $N^\circ$.

\textup{(2)} 
The coset decomposition of $N^\circ$ by $N$ is given as
\begin{equation*}
  N^\circ = \bigcup_{\substack{0 \le i \le 2k-1\\d_2, \ldots, d_{k-1} \in \{0,1\}}}
  ( N + d_2\beta_2/2 + \cdots + d_{k-1}\beta_{k-1}/2 + i \lm_k ).
\end{equation*}

\textup{(3)} $N^\circ/N \cong \Z_2^{k-2} \times \Z_{2k}$.
\end{lemma}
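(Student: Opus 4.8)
The plan is to establish part (1) first, then derive (2) and (3) as consequences. For (1), I would check two things: that the proposed vectors $\beta_2/2, \ldots, \beta_{k-1}/2, \lm_k$ all lie in $N^\circ$, and that they generate $N^\circ$. Membership follows by pairing against the basis $\{\beta_1,\ldots,\beta_{k-1}\}$ of $N$: from $\la \beta_p,\beta_q\ra \in 2\Z$ we get $\la \beta_p/2,\beta_q\ra \in \Z$, and from $\la \beta_p,\lm_k\ra = \dl_{p,k-1}\in\Z$ we get $\lm_k \in N^\circ$. For the spanning claim, since $\lm_k = \frac{1}{2k}(\beta_1 + 2\beta_2 + \cdots + (k-1)\beta_{k-1})$, the change-of-basis matrix from $\{\beta_1,\ldots,\beta_{k-1}\}$ to $\{\beta_2/2,\ldots,\beta_{k-1}/2,\lm_k\}$ (viewed inside $N\otimes\Q$) is upper triangular after reordering, with diagonal entries $\tfrac12,\ldots,\tfrac12,\tfrac{1}{2k}$; hence its determinant is $1/2^{k-2}\cdot 2k = 1/(2^{k-3}\cdot 2^2 k)$, wait — I would recompute carefully: the index $[N^\circ : N]$ equals $1/\det$ of this matrix, and should come out to $2^{k-2}\cdot 2k / 2 = 2^{k-2}k$ once the correct triangular form is used. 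The cleaner route is to compute the discriminant of $N$ directly: $N$ is the root lattice $\sqrt2 A_{k-1}$, whose Gram determinant is $2^{k-1}\cdot k$, so $[N^\circ:N]^2 = 2^{k-1}k$ — but this need not be a perfect square, so instead I would simply verify that $\{\beta_2/2,\ldots,\beta_{k-1}/2,\lm_k\}$ spans a lattice containing $N$ with the correct index by row-reducing, and that it is contained in $N^\circ$; since a lattice strictly between them with the right index and the right dual-pairing property is forced to be all of $N^\circ$, this suffices. The main obstacle is exactly this index bookkeeping — making sure the triangular determinant is evaluated correctly and matches $|N^\circ/N|$.

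For part (2), I would use (1): every element of $N^\circ$ is an integer combination $\sum_{p=2}^{k-1} m_p (\beta_p/2) + n\,\lm_k$ with $m_p, n \in \Z$. Reducing each $m_p$ modulo $2$ (absorbing $m_p\beta_p \in N$) and $n$ modulo $2k$ — here I must check that $2k\,\lm_k \in N$, which holds since $2k\,\lm_k = \beta_1 + 2\beta_2 + \cdots + (k-1)\beta_{k-1} \in N$ — gives a set of coset representatives indexed by $(d_2,\ldots,d_{k-1}) \in \{0,1\}^{k-2}$ and $i \in \{0,\ldots,2k-1\}$. To see these $2^{k-2}\cdot 2k$ representatives are pairwise inequivalent modulo $N$, I would pair a difference against suitable test vectors in $N^\circ$ (or against $\lm_k$ and the $\beta_p/2$), or simply note that the count $2^{k-2}\cdot 2k$ matches $|N^\circ/N|$ computed in the proof of (1), so no collisions are possible.

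Part (3) is then immediate from the representative set in (2): the assignment $(N + \sum d_p\beta_p/2 + i\lm_k) \mapsto ((d_2,\ldots,d_{k-1}), i \bmod 2k)$ is a bijection, and one checks it is a group homomorphism $N^\circ/N \to \Z_2^{k-2}\times\Z_{2k}$ by verifying additivity on representatives — the only subtlety being that adding two representatives may require re-reducing a coefficient $m_p$ from $2$ down to $0$ or $i+i'$ from $\ge 2k$ down, which is precisely the group law on $\Z_2^{k-2}\times\Z_{2k}$. I expect (1) to carry essentially all the difficulty; (2) and (3) are formal once the basis and the index are in hand.
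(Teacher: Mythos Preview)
The paper states this lemma without proof, so there is no argument to compare against; your overall plan is the natural one and would give a complete proof once the index bookkeeping is straightened out.

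The one point that needs correcting is the relation between the discriminant and the index: for any integral lattice $L$ with Gram matrix $G$ one has $[L^\circ:L] = \lvert\det G\rvert$, not $[L^\circ:L]^2 = \lvert\det G\rvert$. Here $N \cong \sqrt{2}A_{k-1}$ has Gram matrix $2\,C_{A_{k-1}}$, whose determinant is $2^{k-1}k$, so $[N^\circ:N] = 2^{k-1}k$ directly (and there is no perfect-square issue). On the other side, writing $\{\lm_k,\beta_2/2,\ldots,\beta_{k-1}/2\}$ in the basis $\{\beta_1,\ldots,\beta_{k-1}\}$ gives a triangular matrix with diagonal entries $\tfrac{1}{2k},\tfrac12,\ldots,\tfrac12$ and hence determinant $\tfrac{1}{2^{k-1}k}$; since you have already checked these vectors lie in $N^\circ$ and that $N$ lies in their $\Z$-span (via $\beta_p = 2(\beta_p/2)$ and $\beta_1 = 2k\lm_k - \sum_{p=2}^{k-1} 2p(\beta_p/2)$), the index matches and the span is exactly $N^\circ$. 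With (1) in hand, your arguments for (2) and (3) go through as written: the count $2^{k-2}\cdot 2k = 2^{k-1}k$ equals $[N^\circ:N]$, so the listed coset representatives are distinct, and the map to $\Z_2^{k-2}\times\Z_{2k}$ is a bijective homomorphism because the reductions $\beta_p \in N$ and $2k\lm_k \in N$ introduce no cross-terms.
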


We consider another $\Z$-basis of $N^\circ$. 
Let
\begin{equation*}
  \lambda_p 
  = \lambda_k - \frac{1}{2}\beta_p - \cdots - \frac{1}{2}\beta_{k-1} 
  = \frac{1}{2k} \gamma - \frac{1}{2} \alpha_p,
  \quad 1 \le p \le k-1.
\end{equation*}
Then $\lambda_p \in N^\circ$  
and $2\lambda_p  \equiv  2\lambda_k \pmod{N}$. 
Note that
\begin{equation}\label{eq:relation_lambda_p_1}
  \lambda_1 + \cdots + \lambda_k = 0.
\end{equation}

Lemma \ref{lem:dual_N} implies the next lemma.

\begin{lemma}\label{lem:dual_N_2}
\textup{(1)} 
$\{ \lm_2, \ldots, \lm_{k-1}, \lm_k \}$ is a $\Z$-basis of $N^\circ$. 

\textup{(2)} 
The coset decomposition of $N^\circ$ by $N$ is given as
\begin{equation*}
N^\circ = \bigcup_{\substack{0 \le i \le 2k-1\\d_2, \ldots, d_{k-1} \in \{0,1\}}}
\big( N + d_2 \lm_2 + \cdots + d_{k-1} \lm_{k-1}  + i \lm_k \big).
\end{equation*}
\end{lemma}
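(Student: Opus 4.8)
\textbf{Proof proposal for Lemma \ref{lem:dual_N_2}.}

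The plan is to deduce Lemma \ref{lem:dual_N_2} from Lemma \ref{lem:dual_N} by a change-of-basis argument. First I would record the relation between the two spanning sets: by the very definition of $\lambda_p$, namely $\lambda_p = \lambda_k - \tfrac12\beta_p - \cdots - \tfrac12\beta_{k-1}$, we have, for $2 \le p \le k-1$,
\begin{equation*}
  \lambda_p - \lambda_{p+1} = -\tfrac12\beta_p, \qquad \lambda_{k-1} = \lambda_k - \tfrac12\beta_{k-1},
\end{equation*}
so that each $\beta_p/2$ with $2 \le p \le k-1$ lies in $\Z\lambda_2 + \cdots + \Z\lambda_{k-1} + \Z\lambda_k$, and conversely each $\lambda_p$ with $2 \le p \le k-1$ lies in $\Z(\beta_2/2) + \cdots + \Z(\beta_{k-1}/2) + \Z\lambda_k$. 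Hence the two sets $\{\beta_2/2,\ldots,\beta_{k-1}/2,\lambda_k\}$ and $\{\lambda_2,\ldots,\lambda_{k-1},\lambda_k\}$ generate the same subgroup of $N^\circ$, which by part (1) of Lemma \ref{lem:dual_N} is all of $N^\circ$. Since both sets have $k-1$ elements and $N^\circ$ has rank $k-1$ (it is the dual of the rank $k-1$ lattice $N$), any generating set of that cardinality is automatically a $\Z$-basis; this proves (1).

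For part (2), I would observe that the transition matrix between the two bases is unipotent (upper/lower triangular with $1$'s on the diagonal, up to reordering), so it has determinant $\pm 1$ and in particular the coefficient of $\lambda_k$ is unaffected. Concretely, writing a general element of $N^\circ$ in the basis of Lemma \ref{lem:dual_N} as $N + d_2\beta_2/2 + \cdots + d_{k-1}\beta_{k-1}/2 + i\lambda_k$ and substituting $\beta_p/2 = \lambda_{p+1} - \lambda_p$ for $2 \le p \le k-2$ and $\beta_{k-1}/2 = \lambda_k - \lambda_{k-1}$, one collects the coefficients and sees that modulo $N$ the element equals $N + d_2'\lambda_2 + \cdots + d_{k-1}'\lambda_{k-1} + i'\lambda_k$ for suitable integers; since $2\lambda_p \equiv 2\lambda_k \pmod N$ and the coset group is $\Z_2^{k-2}\times\Z_{2k}$ by part (3) of Lemma \ref{lem:dual_N}, the $d_p'$ may be taken in $\{0,1\}$ and $i'$ in $\{0,\ldots,2k-1\}$, and the resulting $2^{k-2}\cdot 2k$ cosets are exactly the cosets of $N$ in $N^\circ$ (they are distinct because their number matches $|N^\circ/N|$, and they exhaust $N^\circ$ by the change of basis). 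This gives the stated decomposition.

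The argument is essentially bookkeeping, so there is no serious obstacle; the only point requiring a little care is confirming that after the substitution the $\lambda_k$-coefficient $i$ is genuinely unchanged (so that the $\Z_{2k}$-factor is carried over correctly) and that the $d_p$ can still be reduced to $\{0,1\}$ after the triangular change of variables — both follow from unipotence of the transition matrix together with the relation $2\lambda_p \equiv 2\lambda_k \pmod N$. One could alternatively give a direct proof: check $\langle \beta_p, \lambda_q\rangle \in \Z$ to see $\lambda_q \in N^\circ$ (already noted in the text), compute the Gram matrix of $\{\lambda_2,\ldots,\lambda_k\}$ against $\{\beta_1,\ldots,\beta_{k-1}\}$ and verify its determinant has absolute value $[N^\circ : N]^{-1} = (2^{k-2}\cdot 2k)^{-1}$, forcing the $\lambda$'s to be a basis; but the change-of-basis route is shorter since Lemma \ref{lem:dual_N} is already available.
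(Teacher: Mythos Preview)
Your approach is exactly the paper's: the text simply states that Lemma~\ref{lem:dual_N} implies Lemma~\ref{lem:dual_N_2}, leaving the change-of-basis bookkeeping to the reader, and your outline fills in precisely that bookkeeping.

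Two small corrections to the details, though neither breaks the argument. First, the transition matrix from $\{\beta_2/2,\ldots,\beta_{k-1}/2,\lambda_k\}$ to $\{\lambda_2,\ldots,\lambda_k\}$ is not unipotent: since $\beta_p/2=\lambda_{p+1}-\lambda_p$, the diagonal entries are $-1,\ldots,-1,1$. It is, however, unimodular, which is all you need for part~(1). Second, after substituting into $d_2\beta_2/2+\cdots+d_{k-1}\beta_{k-1}/2+i\lambda_k$ the coefficient of $\lambda_k$ becomes $i+d_{k-1}$, not $i$; so the claim that it is ``unaffected'' is false. This does not matter, because your real argument for~(2) is the correct one: by part~(1) every coset has a representative $\sum c_p\lambda_p$, the relation $2\lambda_p\equiv 2\lambda_k\pmod N$ lets you reduce each $c_p$ ($2\le p\le k-1$) to $\{0,1\}$ at the cost of shifting the $\lambda_k$-coefficient, then $2k\lambda_k\in N$ reduces that coefficient to $\{0,\ldots,2k-1\}$, and counting against $\lvert N^\circ/N\rvert=2^{k-2}\cdot 2k$ forces distinctness.
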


The coset decomposition of $L$ by $R$ is given as
\begin{equation}\label{eq:dec_L_R}
  L = \bigcup_{j=0}^{k-1} \big( R - j \al_k \big) 
  = \bigcup_{j=0}^{k-1} \big( R + 2 j\lm_k - \frac{j}{k}\gm \big),
\end{equation}
and $L/R \cong \Z_k$.
Moreover, the coset decomposition of $R^\circ$ by $L^\circ$ is given as
\begin{equation*}%\label{eq:dec_R_circ_L_circ}
  R^\circ = \bigcup_{j=0}^{k-1} \big( L^\circ - \frac{j}{2k}\gm \big),
\end{equation*}
and $R^\circ/L^\circ \cong \Z_k$. 

For $\bsa = (a_1, \ldots, a_k) \in \{0,1\}^k$, the support $\supp (\bsa)$ is the set of 
$p$, $1 \le p \le k$, for which $a_p \ne 0$, and the Hamming weight $\wt (\bsa)$ 
is the number of nonzero entries $a_p$. 
Then
\begin{equation*}%\label{eq:delta_alpha_p_2}
  \delta_{\bsa} 
  = -\sum_{p = 1}^k a_p \lm_p + \frac{\wt (\bsa)}{2k} \gm.
\end{equation*}

For $\bsa = (a_1, \ldots, a_k) \in \{0,1\}^k$, let 
\begin{equation}\label{def:N_j_a}
  N(j,\bsa) = N - \sum_{p = 1}^k a_p \lm_p + 2 j \lm_k, \quad 0 \le j \le k-1.
\end{equation}
Since $2k \lambda_k \in N$, we can consider $j$ to be modulo $k$. 
We have
\begin{equation*}
  N(j, \bsa) + N(j', \bsa') 
  = N(j + j' - (\wt(\bsa) + \wt(\bsa') - \wt(\bsa + \bsa'))/2, \bsa + \bsa'),
\end{equation*}
where $\bsa + \bsa'$ is the sum of $\bsa$ and $\bsa'$ as elements of $(\Z_2)^k$, 
that is, the symmetric difference as subsets of $\{0,1\}^k$.
By the definition of $\lambda_p$, we also have
\begin{equation}\label{eq:N_j_a-bis}
N(j,\bsa) = N + \frac{1}{2} \sum_{p=1}^k a_p \al_p - j \al_k + \frac{2j -  \wt(\bsa)}{2k} \gamma.
\end{equation}
Since 
$2 \lambda_k - \gamma/k = - \al_k$, 
this equation implies that
\begin{equation*}%\label{eq:R-coset}
R + \delta_{\bsa} + 2 j \lm_k - \frac{j}{k} \gm 
= N(j, \bsa) + \Big( \Z\gm + \frac{\wt(\bsa) - 2j}{2k} \gm \Big)
\end{equation*}
as subsets of $R^\circ = N^\circ \oplus (\Z\gm)^\circ$. 
Hence it follows from \eqref{eq:dec_L_R} that
\begin{equation}\label{eq:dec_L_coset_in_dual_R}
L + \delta_{\bsa} = \bigcup_{j=0}^{k-1} 
\Big( N(j,\bsa) + \Big( \Z\gm + \frac{\wt(\bsa) - 2j}{2k} \gm \Big) \Big).
\end{equation}

\begin{lemma}\label{lem:dual_N_3}
\textup{(1)} For $ 0 \le j, j' \le k-1$ and 
$\bsa, \bsa' \in \{0,1\}^k$, 
we have 
$N(j,\bsa) = N(j',\bsa')$ if and only if one of the following conditions holds.

\quad \textup{(i)} $j \equiv j' \pmod{k}$ and $\bsa = \bsa'$.

\quad \textup{(ii)} $j' \equiv j - \wt (\bsa) \pmod{k}$ and $\bsa + \bsa'= (1, \ldots, 1)$.

\textup{(2)} $N(j,\bsa)$, $0 \le j \le k-1$, $\bsa \in \{0,1\}^k$ with $j < \wt(\bsa)$, are 
the distinct cosets of $N$ in $N^\circ$. 
\end{lemma}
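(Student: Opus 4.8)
The plan is to prove both parts of Lemma~\ref{lem:dual_N_3} by reducing the equality $N(j,\bsa)=N(j',\bsa')$ to a congruence in the quotient $N^\circ/N$, and then matching it against the coset decomposition of Lemma~\ref{lem:dual_N_2}. First I would translate the defining formula \eqref{def:N_j_a} into the statement that $N(j,\bsa)=N(j',\bsa')$ holds precisely when
\[
  -\sum_{p=1}^k a_p\lm_p + 2j\lm_k \equiv -\sum_{p=1}^k a'_p\lm_p + 2j'\lm_k \pmod{N}.
\]
Using $\lm_1+\cdots+\lm_k=0$ from \eqref{eq:relation_lambda_p_1} and $2\lm_p\equiv 2\lm_k\pmod N$, I can rewrite $\sum a_p\lm_p$ modulo $N$ in terms of the basis $\{\lm_2,\ldots,\lm_{k-1},\lm_k\}$, being careful about the $\Z_2$ versus $\Z_{2k}$ behavior: the coordinates on $\lm_2,\ldots,\lm_{k-1}$ only matter mod $2$ (so $\bsa$ is detected up to adding $(1,\ldots,1)$, since $\sum\lm_p=0$ kills the all-ones vector), while the $\lm_k$-coordinate lives in $\Z_{2k}$ and records the ``$j$'' information together with a correction coming from how many of the $a_p$ were reduced. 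The bookkeeping of that correction term is exactly what produces the shift $j'\equiv j-\wt(\bsa)$ in case~(ii).

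For part~(1), once the congruence is in standard form I would split into the two possibilities forced by the $\Z_2^{k-2}$-part: either the $\lm_p$-coordinates ($2\le p\le k-1$) of $\bsa$ and $\bsa'$ agree, or they differ in every slot. In the first case I show $\bsa$ and $\bsa'$ actually agree as $\{0,1\}$-vectors (the only ambiguity $\bsa\mapsto(1,\ldots,1)-\bsa$ would change the parity of the middle coordinates unless $k\le 2$, which is excluded), and then comparing the $\lm_k$-coordinates in $\Z_{2k}$ forces $j\equiv j'\pmod k$ after accounting for the even shift $2k\lm_k\in N$; this gives~(i). In the second case I have $\bsa+\bsa'=(1,\ldots,1)$, and substituting $a'_p=1-a_p$ into the $\lm_k$-coordinate comparison, using $\sum\lm_p=0$ to collect the contribution of $\bsa'$, yields $2j'\equiv 2j-2\wt(\bsa)\pmod{2k}$, i.e.\ $j'\equiv j-\wt(\bsa)\pmod k$; this gives~(ii). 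Conversely, both (i) and (ii) are immediately seen to produce equal cosets by running the same identities backward, using \eqref{eq:N_j_a-bis} if a direct lattice computation is cleaner.

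For part~(2), I want a set of representatives, one per coset. By part~(1), the cosets $N(j,\bsa)$ with $\bsa\ne(1,\ldots,1)$ come in pairs $\{N(j,\bsa),\,N(j-\wt(\bsa),(1,\ldots,1)-\bsa)\}$, and $N(j,(1,\ldots,1))$ is paired with $N(j-k,\0)=N(j,\0)$; so within each such pair exactly one representative satisfies $j<\wt(\bsa)$ (replacing $\bsa$ by its complement swaps $\wt(\bsa)$ with $k-\wt(\bsa)$ and $j$ with $j-\wt(\bsa)\bmod k$, and a short case check on the residue shows the condition ``$j<\wt$'' picks out a unique member — with the degenerate pair $\bsa=\0$ versus $\bsa=(1,\ldots,1)$ handled since $j<0$ is impossible and $j<k$ always holds). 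Counting then confirms we hit each coset once: the number of pairs $(j,\bsa)$ with $0\le j<\wt(\bsa)$ is $\sum_{\bsa}\wt(\bsa)=\sum_{w=0}^k\binom{k}{w}w=k\cdot 2^{k-1}$, which equals $\abs{N^\circ/N}=2^{k-2}\cdot 2k$ from Lemma~\ref{lem:dual_N}(3). The main obstacle I anticipate is getting the correction term in the $\lm_k$-coordinate exactly right — the interplay between reducing $\sum a_p\lm_p$ via $2\lm_p\equiv2\lm_k$ and via $\sum\lm_p=0$ has to be tracked carefully so that the $\wt(\bsa)$ shift appears with the correct sign and the parity of $\wt(\bsa)$ lands in the right place modulo $2k$; everything else is bookkeeping against the two explicit basis descriptions already established.
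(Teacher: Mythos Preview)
Your approach is correct but differs from the paper's in the necessity direction of part~(1). The paper does not solve the congruence in $N^\circ/N$ directly; instead it proves only the easy sufficiency of~(i) and~(ii), observes that the relation~(ii) is a fixed-point-free involution on the set of $2^k k$ pairs $(j,\bsa)$, and then invokes $\abs{N^\circ/N}=2^{k-1}k$ from Lemma~\ref{lem:dual_N}(3) as a pigeonhole: since the pairs already collapse $2$-to-$1$ under~(ii) and the target has exactly $2^{k-1}k$ elements, no further identifications are possible. Parts~(1) and~(2) then fall out together in a couple of lines.

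Your route---expressing $N(j,\bsa)$ in the coordinates of Lemma~\ref{lem:dual_N_2}, reading off the $\Z_2^{k-2}\times\Z_{2k}$ components, and solving---is more computational but entirely self-contained; in particular it does not rely on the (implicit) surjectivity of $(j,\bsa)\mapsto N(j,\bsa)$ that the paper's counting argument needs. One small caution: the $\Z_2^{k-2}$ component only pins down $a_p-a_p'$ relative to $a_1-a_1'$ for $2\le p\le k-1$; the entries $a_1$ and $a_k$ must be handled separately, the first via \eqref{eq:relation_lambda_p_1} and the second by parity in the $\Z_{2k}$-coordinate. That is exactly the ``correction term'' bookkeeping you anticipate, so there is no real gap---just be sure the case analysis treats those two coordinates explicitly.
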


\begin{proof}
Clearly, $N(j,\bsa) = N(j',\bsa')$ if the condition (i) holds. 
Suppose the condition (ii) holds. 
Then $N(j,\bsa) = N(j',\bsa')$ 
by \eqref{eq:relation_lambda_p_1} and \eqref{def:N_j_a}. 
Set $i = \wt (\bsa)$ and $i' = \wt (\bsa')$, 
and assume that $j < i$. 
Then $0 \le j < i \le k$ and $0 \le i' \le j' < k$. 
The number of pairs $(j, \bsa)$ with $0 \le j \le k-1$ and $\bsa \in \{0,1\}^{k}$ is $2^{k}k$. 
Since $|N^\circ/N| = 2^{k-1}k$, we see that $N(j,\bsa) = N(j',\bsa')$ only if 
$j$, $j'$, $\bsa$, and $\bsa'$ satisfy the conditions (i) or (ii). 
Hence the assertions (1) and (2) hold.
\end{proof}

\begin{remark}
In Case \textup{(ii)} of Lemma \ref{lem:dual_N_3} \textup{(1)}, we have 
$(\wt(\bsa') - 2j') - (\wt(\bsa) - 2j) \equiv k \pmod{2k}$. 
This agrees with the fact that 
$N(j,\bsa) + ( \Z\gm + \frac{\wt(\bsa) - 2j}{2k} \gm )$, 
$0 \le j \le k-1$, $\bsa \in \{ 0,1 \}^{k}$, 
in \eqref{eq:dec_L_coset_in_dual_R} are the distinct cosets of $R$ in $L^\circ$.
\end{remark}

The next lemma also holds.

\begin{lemma}\label{lem:theta_on_coset_of_N}
The $-1$ isometry $N^\circ \to N^\circ$; $\al \mapsto -\al$ transforms  
$N(j, \bsa)$ into $N(\wt(\bsa)-j, \bsa)$. 
\end{lemma}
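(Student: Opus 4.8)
The plan is to unwind the definition of $N(j,\bsa)$ in \eqref{def:N_j_a} and apply the $-1$ map directly, then re-express the result in the standard form. Starting from $N(j,\bsa) = N - \sum_{p=1}^k a_p \lm_p + 2j\lm_k$, and using that $N$ is stable under $\al \mapsto -\al$, we get
\[
  -N(j,\bsa) = N + \sum_{p=1}^k a_p \lm_p - 2j\lm_k.
\]
First I would rewrite $\sum_{p=1}^k a_p \lm_p$ using the relation \eqref{eq:relation_lambda_p_1}, namely $\lm_1 + \cdots + \lm_k = 0$. Since the entries $a_p$ lie in $\{0,1\}$, writing $\bsa = (a_1,\dots,a_k)$ and letting $\mathbf{1} = (1,\dots,1)$, we have $\sum_p a_p \lm_p = \sum_p (1 - a_p)\lm_p \cdot(-1)$... more precisely $\sum_p a_p \lm_p = -\sum_p (1-a_p)\lm_p$, but this introduces the complementary codeword rather than $\bsa$ itself; instead I would keep $\sum_p a_p\lm_p$ as is and simply read off the sign.

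The key step is to match $-N(j,\bsa) = N + \sum_{p=1}^k a_p\lm_p - 2j\lm_k$ against the form $N(j',\bsa) = N - \sum_{p=1}^k a_p\lm_p + 2j'\lm_k$. Adding $2\sum_p a_p\lm_p$ and using $\sum_p a_p \lm_p = \wt(\bsa)\,\lm_k - \frac12\sum_p a_p(\beta_p + \cdots + \beta_{k-1})$, hence $2\sum_p a_p\lm_p \equiv 2\wt(\bsa)\lm_k \pmod{N}$ (because $\beta_p \in N$), we obtain
\[
  -N(j,\bsa) = N - \sum_{p=1}^k a_p\lm_p + \bigl(2\wt(\bsa) - 2j\bigr)\lm_k = N(\wt(\bsa) - j,\, \bsa),
\]
where in the last equality $j$ is read modulo $k$ as permitted since $2k\lm_k \in N$. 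This is exactly the claim.

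The only genuine point requiring care — and the step I expect to be the main (minor) obstacle — is the congruence $2\lm_p \equiv 2\lm_k \pmod{N}$, which is already recorded in the text just before Lemma \ref{lem:dual_N_2}; granting that, the computation $2\sum_p a_p\lm_p \equiv 2\wt(\bsa)\lm_k$ is immediate, and everything else is bookkeeping with the identity \eqref{eq:relation_lambda_p_1} and the fact that indices of $\lm_k$ may be taken mod $k$. Alternatively, one could argue via \eqref{eq:N_j_a-bis}, applying $-1$ to $N + \frac12\sum_p a_p\al_p - j\al_k + \frac{2j-\wt(\bsa)}{2k}\gm$ and comparing $\gamma$-components, but the $\lm_p$-route above is cleanest.
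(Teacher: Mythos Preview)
Your argument is correct: the key point is precisely the congruence $2\lm_p \equiv 2\lm_k \pmod{N}$, from which $2\sum_p a_p\lm_p \equiv 2\wt(\bsa)\lm_k \pmod{N}$ follows immediately and yields $-N(j,\bsa) = N(\wt(\bsa)-j,\bsa)$. The paper states the lemma without proof, regarding it as a direct consequence of the definitions, so your computation is exactly the kind of verification intended; the aside about \eqref{eq:relation_lambda_p_1} and the complementary codeword is unnecessary and can be dropped.
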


\section{Decomposition of $V_{N(j,\bsa)}$}\label{sec:dec-V_Nja}

We keep the notations in Sections \ref{sec:Ksl2k} and 
\ref{sec:coset-N-in-Ncirc}. 
In this section, we study a decomposition of the irreducible $V_N$-module $V_{N(j,\bsa)}$ 
as a direct sum of irreducible modules for a tensor product of $k-1$ Virasoro vertex operator 
algebras and $M^0$. 
Let
\begin{equation*}
c_m = 1 - \frac{6}{(m+2)(m+3)}
\end{equation*}
for $m = 1,2, \ldots,$ and let
\begin{equation*}
h^m_{r,s} = \frac{\big( r(m+3) - s(m+2) \big)^2 - 1}{4(m+2)(m+3)}
\end{equation*}
for $1 \le r \le m+1$, $1 \le s \le m+2$. 
Then $h^m_{r,s} = h^m_{m+2-r, m+3-s}$, and
$L(c_m, h^m_{r, s})$, $1 \le s \le r \le m+1$, 
form a complete set of representatives of the equivalence classes of irreducible modules 
for the Virasoro vertex operator algebra $L(c_m, 0)$ \cite{Wang1993}. 
We denote the conformal vector of $L(c_m, 0)$ by $\omega^m$. 

Recall that 
$\omega$ is the conformal vector of $M^0$.  
Let $\omega_T$ be the conformal vector of $T = \Com_{V_N}(M^0)$. 
Then the conformal vector $\omega_N = \omega_T + \omega$ of $V_N$ is a sum 
of mutually orthogonal Virasoro vectors $\omega^1, \ldots, \omega^{k-1}$, and $\omega$ 
\cite{DLMN1998, LY2004} 
with $\omega_T = \omega^1 + \cdots + \omega^{k-1}$. 
The vector $\omega^m$ generates $L(c_m,0)$, 
so $T \supset L(c_1, 0) \otimes \cdots \otimes L(c_{k-1},0)$. 
The following decomposition is known 
\cite{KR2013, LLY2003, Wakimoto2001}.

\begin{lemma}\label{lem:dec_V_L_coset_LY}
For $\bsa = (a_1,\ldots,a_{k}) \in \{ 0,1\}^{k}$, 
\begin{equation*}
V_{L + \delta_{\bsa}} = 
\bigoplus_{\substack{0 \le i_s \le s\\i_s \equiv b_s \pmod{2}\\1 \le s \le k}}
L(c_1, h^1_{i_1+1, i_2+1}) \otimes \cdots \otimes L(c_{k-1},h^{k-1}_{i_{k-1}+1, i_{k}+1}) 
\otimes L_{\widehat{\mfsl}_2}(k,i_{k})
\end{equation*}
as $L(c_1,0) \otimes \cdots \otimes L(c_{k-1}, 0) \otimes L_{\widehat{\mfsl}_2}(k,0)$-modules, where $b_s = \sum_{p=1}^s a_p$. 
\end{lemma}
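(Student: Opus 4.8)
The plan is to build the decomposition in two stages, iterating the known branching rule for a coset of a rank-one sublattice. The starting point is the identification $V_L \cong L_{\widehat{\mfsl}_2}(1,0)^{\otimes k}$ together with the chain of conformal vectors $\omega_N = \omega^1 + \cdots + \omega^{k-1} + \omega$, where each $\omega^m$ generates a copy of $L(c_m,0)$ inside $V_L$; this gives an embedding of $L(c_1,0)\otimes\cdots\otimes L(c_{k-1},0)\otimes L_{\widehat{\mfsl}_2}(k,0)$ into $V_L$ realized concretely via the coset construction of \cite{DLMN1998, LY2004} (equivalently the iterated GKO construction). The combinatorial engine is the classical level-rank / GKO branching identity
\begin{equation*}
  L_{\widehat{\mfsl}_2}(m,i_m) \otimes L_{\widehat{\mfsl}_2}(1,\epsilon)
  \;\cong\; \bigoplus_{\substack{0 \le i_{m+1} \le m+1\\ i_{m+1}\equiv i_m+\epsilon \pmod 2}}
  L\bigl(c_m, h^m_{i_m+1,\, i_{m+1}+1}\bigr) \otimes L_{\widehat{\mfsl}_2}(m+1, i_{m+1})
\end{equation*}
as modules for $L(c_m,0)\otimes L_{\widehat{\mfsl}_2}(m+1,0)$, valid for the vacuum modules ($i_1 = \epsilon$) and, more generally, for modules; this is exactly the content cited from \cite{KR2013, LLY2003, Wakimoto2001}.

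The key steps, in order. First I would record the coset decomposition $V_{L+\delta_{\bsa}} = \bigoplus_{\epsilon_1,\dots,\epsilon_k} V_{\Z\al_1 + (\epsilon_1/2)\al_1}\otimes\cdots$, i.e. write $V_{L+\delta_{\bsa}} = V_{\Z\al_1 + \delta_1}\otimes\cdots\otimes V_{\Z\al_k + \delta_k}$ with $\delta_p = (a_p/2)\al_p$, so that each tensor factor is the $L_{\widehat{\mfsl}_2}(1,0)$-module $L_{\widehat{\mfsl}_2}(1,a_p)$ (vacuum when $a_p=0$, the other irreducible when $a_p=1$). Second, I would peel off the Virasoro factors one at a time from the left: apply the branching identity above with $m=1$ to $L_{\widehat{\mfsl}_2}(1,a_1)\otimes L_{\widehat{\mfsl}_2}(1,a_2)$ to produce a sum over $i_2$ with $i_2\equiv a_1+a_2 = b_2 \pmod 2$ of $L(c_1,h^1_{i_1+1,i_2+1})\otimes L_{\widehat{\mfsl}_2}(2,i_2)$ where $i_1 = a_1 = b_1$; then fuse in the next factor $L_{\widehat{\mfsl}_2}(1,a_3)$, apply the identity with $m=2$, and continue. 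After $k-1$ steps the residual affine factor is $L_{\widehat{\mfsl}_2}(k, i_k)$ and the accumulated parity constraints read $i_s \equiv i_{s-1} + a_s \equiv b_s \pmod 2$ for $2 \le s \le k$, with $i_1 = b_1$; this is precisely the index set in the statement. Third, I would check that the commuting Virasoro vectors produced step by step by the iterated GKO construction coincide with the fixed $\omega^1,\dots,\omega^{k-1}$ of $V_N$ (so that the right-hand side really is a decomposition as $L(c_1,0)\otimes\cdots\otimes L(c_{k-1},0)\otimes L_{\widehat{\mfsl}_2}(k,0)$-modules), using the uniqueness of the decomposition of $\omega_N$ into mutually orthogonal Virasoro vectors from \cite{DLMN1998, LY2004}.

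The main obstacle is bookkeeping rather than conceptual: one must verify that the branching identity applies not only to vacuum-to-vacuum but at every stage to the intermediate modules $L_{\widehat{\mfsl}_2}(m,i_m)$ appearing in the partial decomposition (this is why I invoke the module-level form of the GKO branching from \cite{KR2013, LLY2003, Wakimoto2001} rather than just the vacuum case), and that the labels $h^m_{r,s}$ match — here the symmetry $h^m_{r,s}=h^m_{m+2-r,m+3-s}$ and the reflection $L_{\widehat{\mfsl}_2}(m,i)\leftrightarrow L_{\widehat{\mfsl}_2}(m,m-i)$ must be tracked so that the ranges $0\le i_s\le s$ (rather than $0 \le i_s \le s$ after some reflection) come out as stated. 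A secondary point is that the multiplicities are all one, which follows because each application of the branching rule is multiplicity-free; I would note this to justify writing $\bigoplus$ with no multiplicity coefficients. Everything else is a routine induction on $k$.
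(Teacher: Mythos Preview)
Your proposal is correct: the iterated GKO branching argument you outline is precisely the standard proof of this decomposition, and the bookkeeping of the parity constraints $i_s \equiv b_s \pmod 2$ and the range $0 \le i_s \le s$ is accurate. The paper itself does not give a proof of this lemma at all; it simply records the decomposition as ``known'' and cites \cite{KR2013, LLY2003, Wakimoto2001}, where the argument you sketch (tensor one copy of $L_{\widehat{\mfsl}_2}(1,a_p)$ at a time and apply the GKO coset branching) is carried out. So there is nothing to compare beyond noting that you have supplied the proof the paper delegates to the literature.
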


Combining the decomposition \eqref{eq:dec-LUi} with Lemma \ref{lem:dec_V_L_coset_LY},
we have
\begin{multline}\label{eq:dec_V_L_coset_1}
V_{L + \delta_{\bsa}} =
\bigoplus_{j = 0}^{k-1} \Big(
\bigoplus_{\substack{
0 \le i_s \le s\\
i_s \equiv b_s \pmod{2}\\
1 \le s \le k}}
L(c_1, h^1_{i_1+1, i_2+1}) \otimes \cdots \otimes L(c_{k-1},h^{k-1}_{i_{k-1}+1, i_{k}+1})\\
\otimes M^{i_{k},j} \otimes V_{\Z\gm + (i_{k}-2 j)\gm/2k} \Big)
\end{multline}
as $L(c_1,0) \otimes \cdots \otimes L(c_{k-1},0) \otimes M^0 \otimes V_{\Z\gm}$-modules. 

Since $b_{k} = \wt (\bsa)$, \eqref{eq:dec_L_coset_in_dual_R} implies that 
\begin{equation}\label{eq:dec_V_L_coset_2}
V_{L + \delta_{\bsa}} =
\bigoplus_{j=0}^{k-1} V_{N(j,\bsa)} \otimes V_{\Z\gm +(b_{k} - 2j)\gm/2k}
\end{equation}
as $V_N \otimes V_{\Z\gm}$-modules.

As $V_{\Z\gm}$-modules, 
$V_{\Z\gm +(b_{k} - 2 j)\gm/2k} \cong  V_{\Z\gm + (i_{k}-2 q)\gm/2k}$ 
if and only if $q \equiv j + (i_{k} - b_{k})/2 \pmod{k}$. 
Here, note that $i_{k}$ on the right hand side of \eqref{eq:dec_V_L_coset_1} 
satisfies $i_{k} \equiv b_{k} \pmod{2}$. 
Comparing \eqref{eq:dec_V_L_coset_1} and \eqref{eq:dec_V_L_coset_2}, we have 
the following theorem, see \cite[Proposition 3.4]{LS2008}. 

\begin{theorem}\label{thm:dec_V_Nja}
For $0 \le j \le k-1$ and $\bsa = (a_1,\ldots,a_{k}) \in \{0,1\}^{k}$, 
the irreducible $V_N$-module $V_{N(j,\bsa)}$ decomposes as a direct sum
\begin{equation}\label{eq:dec_V_Nja}
V_{N(j,\bsa)} = 
\bigoplus_{\substack{
0 \le i_s \le s\\
i_s \equiv b_s \pmod{2}\\
1 \le s \le k}}
L(c_1, h^1_{i_1+1, i_2+1}) \otimes \cdots \otimes L(c_{k-1},h^{k-1}_{i_{k-1}+1, i_{k}+1})
\otimes M^{i_{k}, j + (i_{k} - b_{k})/2}
\end{equation}
of irreducible $L(c_1,0) \otimes \cdots \otimes L(c_{k-1},0) \otimes M^0$-modules, 
where $b_s = \sum_{p=1}^s a_p$.
\end{theorem}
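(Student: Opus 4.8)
The plan is to read off \eqref{eq:dec_V_Nja} by comparing the two descriptions \eqref{eq:dec_V_L_coset_1} and \eqref{eq:dec_V_L_coset_2} of the irreducible $V_{L^\circ}$-module $V_{L+\delta_{\bsa}}$ and then cancelling a common tensor factor. Put $A = L(c_1,0) \otimes \cdots \otimes L(c_{k-1},0) \otimes M^0$, which is a rational, $C_2$-cofinite vertex operator algebra of CFT-type (a tensor product of such) sitting inside $V_N$. Then $A \otimes V_{\Z\gamma}$ is again rational and $C_2$-cofinite, and by \cite{FHL1993} its irreducible modules are exactly the products $X \otimes Y$ with $X \in \Irr(A)$ and $Y \in \Irr(V_{\Z\gamma})$, these being pairwise inequivalent; in particular every $A \otimes V_{\Z\gamma}$-module occurring below is completely reducible with a unique decomposition into irreducibles. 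Restricting the $V_N$-action in \eqref{eq:dec_V_L_coset_2} to $A$, both \eqref{eq:dec_V_L_coset_1} and \eqref{eq:dec_V_L_coset_2} now present $V_{L+\delta_{\bsa}}$ as an $A \otimes V_{\Z\gamma}$-module.

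First I would fix $j$ with $0 \le j \le k-1$ and consider the isotypic part of $V_{L+\delta_{\bsa}}$ on which $V_{\Z\gamma}$ acts through the irreducible module $Z_j := V_{\Z\gamma + (b_k - 2j)\gamma/2k}$. For $0 \le j, j' \le k-1$ one has $Z_j \cong Z_{j'}$ as $V_{\Z\gamma}$-modules if and only if $j = j'$, so \eqref{eq:dec_V_L_coset_2} identifies this isotypic part with $V_{N(j,\bsa)} \otimes Z_j$. On the other hand, in \eqref{eq:dec_V_L_coset_1} a summand indexed by a tuple $(i_1,\dots,i_k)$ together with the outer index appearing there lies in this isotypic part exactly when its $V_{\Z\gamma}$-factor is isomorphic to $Z_j$, i.e., as recorded just before the theorem, when the outer index is congruent to $j + (i_k - b_k)/2$ modulo $k$; since $i_k \equiv b_k \pmod{2}$, this determines the outer index uniquely from $(i_1,\dots,i_k)$, and the corresponding parafermion factor is $M^{i_k,\, j + (i_k - b_k)/2}$. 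Equating the two expressions for this isotypic part gives
\[
  V_{N(j,\bsa)} \otimes Z_j \cong \Biggl( \bigoplus_{\substack{0 \le i_s \le s\\ i_s \equiv b_s \pmod{2}\\ 1 \le s \le k}} L(c_1,h^1_{i_1+1,i_2+1}) \otimes \cdots \otimes L(c_{k-1},h^{k-1}_{i_{k-1}+1,i_k+1}) \otimes M^{i_k,\, j + (i_k - b_k)/2} \Biggr) \otimes Z_j
\]
as $A \otimes V_{\Z\gamma}$-modules, noting that each factor $L(c_m,h^m_{r,s})$ and $M^{i_k,\,\cdot}$ occurring is an irreducible module.

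It remains to cancel the common factor $Z_j$. Decomposing the $A$-module $V_{N(j,\bsa)}$ into irreducibles and tensoring with $Z_j$ gives a decomposition of the left-hand side above into irreducible $A \otimes V_{\Z\gamma}$-modules, while the right-hand side is already so decomposed; by uniqueness of the decomposition into irreducibles together with the fact that the irreducibles of $A \otimes V_{\Z\gamma}$ are the distinct products $X \otimes Y$, the $A$-irreducible content of $V_{N(j,\bsa)}$, counted with multiplicity, must coincide with that of the displayed direct sum, which is \eqref{eq:dec_V_Nja}. (Equivalently one may apply the multiplicity-space functor $W \mapsto \Hom_{V_{\Z\gamma}}(Z_j, W)$, which carries $P \otimes Z_j$ to $P$ for any $A$-module $P$, or compare normalized graded characters and use the linear independence of the characters of the irreducible $A$-modules.) The step that needs care is exactly this cancellation: since $Z_j$ is infinite-dimensional it cannot be divided out naively, and one must invoke complete reducibility and the classification of irreducible modules over a tensor product of rational vertex operator algebras; isolating the correct $V_{\Z\gamma}$-isotypic component is then routine, resting only on the observation that distinct $j \in \{0,\dots,k-1\}$ yield inequivalent modules $Z_j$ and that the congruence recorded before the theorem is precisely the matching condition.
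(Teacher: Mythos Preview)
Your argument is correct and follows essentially the same route as the paper: compare the two decompositions \eqref{eq:dec_V_L_coset_1} and \eqref{eq:dec_V_L_coset_2} of $V_{L+\delta_{\bsa}}$, match the $V_{\Z\gamma}$-factors via the congruence $q \equiv j + (i_k - b_k)/2 \pmod{k}$, and strip off the common Heisenberg factor. The only difference is that you spell out the justification for the cancellation step (rationality of $A\otimes V_{\Z\gamma}$ and the classification of its irreducibles), which the paper leaves implicit.
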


The next remark is a restatement of \cite[Proposition 3.5]{LS2008}.
\begin{remark}
$N(j,\bsa) = N(j',\bsa')$ for $j'$, $\bsa'$ 
in Case \textup{(ii)} of Lemma \ref{lem:dual_N_3} \textup{(1)} corresponds to the following properties of 
the highest weights $h^m_{p,q}$ for 
$L(c_m,0)$ 
and the irreducible modules $M^{i,j}$ for 
$K(sl_2, k)$.

\textup{(1)} $h^m_{p,q} = h^m_{m+2-p, m+3-q}$ 
for $1 \le p \le m+1$, $1 \le q \le m+2$.

\textup{(2)} $M^{i,j} \cong M^{k-i, j-i}$ as $K(sl_2, k)$-modules 
for $0 \le i \le k$, $j \in \Z_{k}$.
\end{remark}

We note that for a given $\bsa \in \{ 0, 1\}^{k}$, 
the $L(c_1,0) \otimes \cdots \otimes L(c_{k-1},0)$-modules 
\begin{equation*}
L(c_1, h^1_{i_1+1, i_2+1}) \otimes \cdots \otimes L(c_{k-1}, h^{k-1}_{i_{k-1}+1, i_{k}+1}), 
\end{equation*}
$0 \le i_s \le s$, $i_s \equiv b_s \pmod{2}$, $1 \le s \le k$, 
in \eqref{eq:dec_V_Nja} are inequivalent to each other.

\section{Irreducible $K(\mfsl_2,k)$-modules in $V_{N(j,\bsa)}$}
\label{sec:irred-mod-in-coset}

In this section, we discuss how irreducible $K(\mfsl_2,k)$-modules $M^{i,j}$ 
appear on the right hand side of \eqref{eq:dec_V_Nja}. 
Since $h^s_{p,q} = 0$ if and only if $(p,q) = (1,1)$ or $(s+1,s+2)$, 
the following lemma holds.

\begin{lemma}\label{lem:L_cm_0}
Let $1 \le m < k$. 
Then for $a_1, \ldots, a_{m+1} \in \{ 0,1\}$ and $0 \le i_s \le s$, $1 \le s \le m+1$, 
the two conditions 
$i_s \equiv b_s \pmod{2}$, $1 \le s \le m+1$, and 
$h^s_{i_s + 1, i_{s+1} + 1} = 0$, $1 \le s \le m$, 
hold only if 
\textup{(i)} $a_s = 0$ and $i_s = 0$, $1 \le s \le m+1$, or 
\textup{(ii)} $a_s = 1$ and $i_s = s$, $1 \le s \le m+1$.
\end{lemma}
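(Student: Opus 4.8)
The plan is to argue by induction on $m$, using the basic fact that $h^s_{p,q}=0$ precisely when $(p,q)=(1,1)$ or $(p,q)=(s+1,s+2)$. First I would set up the base case $m=1$: here the hypotheses are $i_1\equiv a_1\pmod 2$, $i_2\equiv a_1+a_2\pmod 2$, and $h^1_{i_1+1,i_2+1}=0$ with $0\le i_1\le 1$, $0\le i_2\le 2$. The vanishing condition forces $(i_1+1,i_2+1)=(1,1)$ or $(2,3)$, i.e. $(i_1,i_2)=(0,0)$ or $(1,2)$. In the first case $i_1=0$ gives $a_1=0$ from $i_1\equiv b_1$, and then $i_2=0$ gives $a_2=0$; in the second case $i_1=1$ gives $a_1=1$ and $i_2=2$ gives $a_2=1$. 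So the conclusion holds for $m=1$, and moreover the parity of $i_{m+1}$ matches the ``all zero'' versus ``all one'' alternative.

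Next I would run the inductive step. Suppose the statement holds for $m-1$ (with $m\ge 2$). Given $a_1,\dots,a_{m+1}$ and $i_1,\dots,i_{m+1}$ satisfying $i_s\equiv b_s\pmod 2$ for $1\le s\le m+1$ and $h^s_{i_s+1,i_{s+1}+1}=0$ for $1\le s\le m$, the first $m-1$ of the vanishing conditions together with the first $m$ parity conditions are exactly the hypotheses of the lemma for $m-1$. Hence either (i) $a_s=0$ and $i_s=0$ for $1\le s\le m$, or (ii) $a_s=1$ and $i_s=s$ for $1\le s\le m$. It remains to pin down $a_{m+1}$ and $i_{m+1}$ from the last vanishing condition $h^m_{i_m+1,i_{m+1}+1}=0$. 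In case (i) we have $i_m=0$, so $h^m_{1,i_{m+1}+1}=0$ forces $(1,i_{m+1}+1)=(1,1)$ (the alternative $(m+1,m+2)$ is impossible since the first coordinate is $1\ne m+1$ for $m\ge 1$), giving $i_{m+1}=0$; then $i_{m+1}\equiv b_{m+1}=a_{m+1}\pmod 2$ forces $a_{m+1}=0$. In case (ii) we have $i_m=m$, so $h^m_{m+1,i_{m+1}+1}=0$ forces $(m+1,i_{m+1}+1)=(m+1,m+2)$ (the alternative $(1,1)$ is impossible since $m+1\ne 1$), giving $i_{m+1}=m+1$; then $i_{m+1}=m+1\equiv b_{m+1}\pmod 2$, and since in case (ii) $b_m=m$ we get $a_{m+1}=b_{m+1}-b_m\equiv 1\pmod 2$, so $a_{m+1}=1$. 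This completes the induction.

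The only mild subtlety, and the step I would be most careful about, is checking in each application of the vanishing condition that exactly one of the two possibilities $(1,1)$, $(s+1,s+2)$ is compatible with the already-known value of $i_s$ (namely $i_s=0$ rules out the second, $i_s=s$ rules out the first, using $s\ge 1$). Everything else is bookkeeping with the partial sums $b_s=\sum_{p=1}^s a_p$ and the parity congruences. Note that the lemma only claims a necessary condition (``only if''), so no converse verification is needed; one could remark, though, that both alternatives (i) and (ii) do satisfy all the listed conditions, since $h^s_{1,1}=h^s_{s+1,s+2}=0$.
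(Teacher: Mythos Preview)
Your proof is correct and follows exactly the approach the paper intends: the paper simply notes that $h^s_{p,q}=0$ if and only if $(p,q)=(1,1)$ or $(s+1,s+2)$ and declares the lemma to follow, so your induction is just a careful spelling-out of the bookkeeping the paper leaves implicit.
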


For an arbitrarily given $a_1  \in \{0,1\}$, 
each coset of $N$ in $N^\circ$ is uniquely expressed as $N(j,\bsa)$, $j \in \Z_{k}$, 
$\bsa = (a_1,a_2,\ldots,a_{k})$, $a_2, \ldots, a_{k} \in \{ 0,1\}$ by Lemma \ref{lem:dual_N_3}. 
For the rest of this section, we take $a_1 = 0$. 
For simplicity of notation, we omit $\1 \otimes \cdots \otimes \1$ 
in an equation as 
\begin{equation*}
  \{ v \in V_N \mid \omega^s_{(1)} v = 0, 1 \le s \le k-1 \} 
  = \1 \otimes \cdots \otimes \1 \otimes M^0.
\end{equation*}

The following two propositions are clear from Theorem \ref{thm:dec_V_Nja} and Lemma \ref{lem:L_cm_0}.
\begin{proposition}\label{prop:Mj_in_V_Nj0}
For $j \in \Z_{k}$, we have
\begin{equation*}%\label{eq:Mj_in_V_Nj0}
\{ v \in V_{N(j,(0,\ldots,0))} \mid \omega^s_{(1)} v = 0, 1 \le s \le k-1 \} = M^j.
\end{equation*}
\end{proposition}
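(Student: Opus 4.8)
The plan is to read off the claimed identity directly from the decomposition in Theorem~\ref{thm:dec_V_Nja} by isolating the summand on which all the Virasoro modes $\omega^s_{(1)}$, $1\le s\le k-1$, act as zero. Taking $\bsa=(0,\ldots,0)$ so that $b_s=0$ for all $s$, the parity constraint $i_s\equiv b_s\equiv 0\pmod 2$ forces each $i_s$ to be even, and in particular the decomposition \eqref{eq:dec_V_Nja} expresses $V_{N(j,(0,\ldots,0))}$ as a direct sum of irreducible $L(c_1,0)\otimes\cdots\otimes L(c_{k-1},0)\otimes M^0$-modules of the form $L(c_1,h^1_{i_1+1,i_2+1})\otimes\cdots\otimes L(c_{k-1},h^{k-1}_{i_{k-1}+1,i_k+1})\otimes M^{i_k,\,j+i_k/2}$.

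First I would observe that the subspace $\{v\in V_{N(j,(0,\ldots,0))}\mid \omega^s_{(1)}v=0,\ 1\le s\le k-1\}$ picks out exactly those direct summands for which the $L(c_s,0)$-factor is the vacuum module $L(c_s,0)$ itself, i.e.\ those with $h^s_{i_s+1,i_{s+1}+1}=0$ for every $s=1,\ldots,k-1$; this uses that the conformal weights $\omega^1,\ldots,\omega^{k-1}$ are mutually orthogonal Virasoro vectors summing to $\omega_T$, that an irreducible $L(c_s,0)$-module has zero conformal weight if and only if it is the vacuum module, and that every non-vacuum irreducible has positive conformal weight so $\omega^s_{(1)}$ is not identically zero there. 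Next I would apply Lemma~\ref{lem:L_cm_0} with $m=k-1$: since $a_s=0$ for all $s$, alternative (ii) is excluded and we are left with alternative (i), namely $i_s=0$ for $1\le s\le k$. Substituting $i_k=0$ into the surviving summand gives $L(c_1,0)\otimes\cdots\otimes L(c_{k-1},0)\otimes M^{0,\,j+0/2}=\1\otimes\cdots\otimes\1\otimes M^{0,j}$, and since $M^{0,j}=M^j$ by definition (the $M^{0,j}$ are precisely the $H$-weight spaces of $\Vaff\cdot v^0=L_{\widehat{\mfsl}_2}(k,0)$), we obtain exactly the space $M^j$ after dropping the $\1\otimes\cdots\otimes\1$ factors as in the stated notational convention.

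Since this is essentially a matter of combining two results already proved in the excerpt, there is no serious obstacle; the only point demanding a little care is the justification that the joint kernel of the $\omega^s_{(1)}$ is the sum of exactly those summands with vacuum Virasoro factors rather than something larger. That follows from the fact that the $L(c_s,0)$-modules appearing in \eqref{eq:dec_V_Nja} for fixed $\bsa$ are pairwise inequivalent (as noted at the end of Section~\ref{sec:dec-V_Nja}) and from the grading of each $L(c_s,h)$ by $L_0$-eigenvalues bounded below by $h$, so that on a tensor-product summand $\omega^s_{(1)}$ acts with minimal eigenvalue $h^s_{i_s+1,i_{s+1}+1}+(\text{contribution from }M^0)$; an element killed by all of $\omega^s_{(1)}$ must lie in summands where each such minimal eigenvalue, hence each $h^s_{i_s+1,i_{s+1}+1}$, vanishes. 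I would state this cleanly and then invoke Lemma~\ref{lem:L_cm_0}, giving a proof of at most a few lines.
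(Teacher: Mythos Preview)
Your approach is correct and is exactly the one the paper intends: the paper simply states that the proposition is ``clear from Theorem~\ref{thm:dec_V_Nja} and Lemma~\ref{lem:L_cm_0},'' and you have filled in precisely those details. One small slip: in your last paragraph you write that $\omega^s_{(1)}$ acts with minimal eigenvalue $h^s_{i_s+1,i_{s+1}+1}+(\text{contribution from }M^0)$, but in fact $\omega^s_{(1)}$ acts only on the $s$-th Virasoro tensor factor (as its $L_0$), so there is no contribution from the $M^0$-factor; the kernel of $\omega^s_{(1)}$ on a summand $L(c_1,h_1)\otimes\cdots\otimes L(c_{k-1},h_{k-1})\otimes M^{i_k,\,\cdot}$ is zero unless $h_s=0$, in which case it is $\C\mathbf{1}$ in the $s$-th slot tensored with everything else. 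This does not affect your conclusion.
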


\begin{proposition}\label{prop:Mij_in_V_Nj0d}
For $j \in \Z_{k}$ and $d \in \{ 0,1 \}$, we have
\begin{multline}\label{eq:Mij_in_V_Nj0d}
\{ v \in V_{N(j,(0,\ldots,0,d))} \mid \omega^s_{(1)} v = 0, 1 \le s \le k-2 \}\\
= \bigoplus_{\substack{
0 \le i \le k\\
i \equiv d \pmod{2}}}
L(c_{k-1},h^{k-1}_{1, i+1}) 
\otimes M^{i, j + (i - d)/2}. 
\end{multline}
\end{proposition}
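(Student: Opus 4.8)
The plan is to deduce Proposition \ref{prop:Mij_in_V_Nj0d} directly from Theorem \ref{thm:dec_V_Nja} and Lemma \ref{lem:L_cm_0}, specializing to the codeword $\bsa = (0,\ldots,0,d)$ with $a_1 = \cdots = a_{k-1} = 0$ and $a_k = d \in \{0,1\}$. First I would compute the partial sums $b_s = \sum_{p=1}^s a_p$ for this $\bsa$: one has $b_s = 0$ for $1 \le s \le k-1$ and $b_k = d$. Plugging this into the decomposition \eqref{eq:dec_V_Nja}, the summation indices $i_s$ must satisfy $i_s \equiv 0 \pmod 2$ for $1 \le s \le k-1$ and $i_k \equiv d \pmod 2$, with $0 \le i_s \le s$.

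Next I would apply the operator condition $\omega^s_{(1)} v = 0$ for $1 \le s \le k-2$. On a summand $L(c_1,h^1_{i_1+1,i_2+1}) \otimes \cdots \otimes L(c_{k-1},h^{k-1}_{i_{k-1}+1,i_k+1}) \otimes M^{i_k, j+(i_k-b_k)/2}$, the vector $\omega^s$ acts as the conformal vector of the $s$-th Virasoro factor, so the subspace annihilated by all $\omega^s_{(1)}$, $1 \le s \le k-2$, is precisely the span of those summands for which $L(c_s, h^s_{i_s+1,i_{s+1}+1})$ is the vacuum module, i.e. $h^s_{i_s+1,i_{s+1}+1} = 0$ for $1 \le s \le k-2$ (here I use that $L(c_s,0)$ has one-dimensional weight-zero space and that $L(c_s,h)$ with $h \ne 0$ has no vector killed by $\omega^s_{(1)}$ other than $0$, together with the fact — noted at the end of Section \ref{sec:dec-V_Nja} — that the Virasoro tensor factors appearing are pairwise inequivalent so no cross terms arise). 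Now Lemma \ref{lem:L_cm_0}, applied with $m = k-2$ and $a_1 = \cdots = a_{k-1} = 0$, forces alternative (i): $i_s = 0$ for $1 \le s \le k-1$. Thus the only surviving index is $i_k$, subject to $0 \le i_k \le k$ and $i_k \equiv d \pmod 2$, and for each such $i_k$ the corresponding summand is $L(c_{k-1}, h^{k-1}_{1, i_k+1}) \otimes M^{i_k,\, j+(i_k-d)/2}$. Renaming $i_k$ as $i$ and collecting these summands yields exactly the right-hand side of \eqref{eq:Mij_in_V_Nj0d}.

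The only point requiring a little care — and the one I would expect to be the main obstacle, modest as it is — is justifying that imposing $\omega^s_{(1)} v = 0$ for $1 \le s \le k-2$ picks out a sub-\emph{direct sum} of \eqref{eq:dec_V_Nja} rather than some more complicated subspace. This is where the inequivalence remark from Section \ref{sec:dec-V_Nja} is used: since the $L(c_1,0)\otimes\cdots\otimes L(c_{k-1},0)$-modules in the decomposition are mutually inequivalent, the decomposition is canonical, and an element of $V_{N(j,\bsa)}$ is annihilated by $\omega^1_{(1)},\dots,\omega^{k-2}_{(1)}$ if and only if each of its homogeneous components (with respect to this direct sum) is, i.e. if and only if it lies in the span of the summands whose first $k-2$ Virasoro labels are zero. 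Everything else is the bookkeeping of Lemma \ref{lem:L_cm_0} and a substitution of $b_s$-values, so the proof is short.
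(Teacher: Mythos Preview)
Your proposal is correct and follows exactly the approach the paper intends: the paper simply states that the proposition is ``clear from Theorem \ref{thm:dec_V_Nja} and Lemma \ref{lem:L_cm_0}'', and you have spelled out precisely those details. One small remark: the inequivalence observation you invoke is not actually needed, since \eqref{eq:dec_V_Nja} is already a direct sum of $L(c_1,0)\otimes\cdots\otimes L(c_{k-1},0)\otimes M^0$-modules and each $\omega^s_{(1)}$ preserves every summand, so the kernel is automatically the direct sum of the kernels on the summands.
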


The next proposition is a consequence of \eqref{eq:equiv-Mij}.

\begin{proposition}\label{prop:appearance_of_Mij}
Let $d \in \{ 0, 1 \}$.

\textup{(1)} If $k$ is odd, then 
$M^{i, j + (i - d)/2}$, $j \in \Z_{k}$, $0 \le i \le k$, $i \equiv d \pmod{2}$,  
are inequivalent to each other, and 
they are the $k(k+1)/2$ inequivalent irreducible modules $M^{i,j}$, $0 \le j < i \le k$.

\textup{(2)} If $k$ is even, then
$M^{i, j + (i - d)/2}$, $j \in \Z_{k}$, $0 \le i \le k$, $i \equiv d \pmod{2}$,  
cover twice the set of  inequivalent irreducible modules 
$M^{i,j}$, $0 \le j < i \le k$ with $i \equiv d \pmod{2}$. 
There are $k(k+2)/4$ \textup{(}resp.  $k^2/4$\textup{)} inequivalent irreducible modules 
$M^{i,j}$, $0 \le j < i \le k$ with $i \equiv 0 \pmod{2}$ 
\textup{(}resp. $i \equiv 1 \pmod{2}$\textup{)}.
Moreover, for a fixed $j \in \Z_{k}$, the irreducible modules 
$M^{i, j + (i-d)/2}$, $0 \le i \le k$, $i \equiv d \pmod{2}$,  
are inequivalent to each other.
\end{proposition}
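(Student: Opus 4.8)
The plan is to derive everything from the equivalence \eqref{eq:equiv-Mij} together with property (6), by a counting argument. First I would pin down the \emph{complete} isomorphism relation among the modules $M^{i,j}$ with $0\le i\le k$ and $j\in\Z_k$. The assignment $(i,j)\mapsto(k-i,\,j-i)$, with the second entry read modulo $k$, is an involution on the set of $k(k+1)$ such pairs, and it has no fixed point: a fixed point would require $2i=k$ and $i\equiv 0\pmod k$ simultaneously, which is impossible for $k\ge 2$. Hence this involution splits the $k(k+1)$ pairs into exactly $k(k+1)/2$ orbits of size two, each orbit yielding one isomorphism class of modules by \eqref{eq:equiv-Mij}; since by property (6) there are exactly $1+2+\cdots+k=k(k+1)/2$ isomorphism classes of irreducible $M^0$-modules, the induced map from orbits to classes is a bijection, so no further identifications are possible. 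Thus $M^{i,j}\cong M^{i',j'}$ if and only if $(i',j')=(i,j)$ or $(i',j')=(k-i,\,j-i)$ in $\{0,\dots,k\}\times\Z_k$.

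Next I would analyze the map $\Phi_d\colon(j,i)\mapsto M^{i,\,j+(i-d)/2}$, defined on the pairs $(j,i)$ with $j\in\Z_k$, $0\le i\le k$, and $i\equiv d\pmod 2$ (so $(i-d)/2\in\Z$). By the previous step, $\Phi_d(j,i)\cong\Phi_d(j',i')$ forces either $(j',i')=(j,i)$, or $i'=k-i$ and $j'=j-k/2$ in $\Z_k$; the second alternative is obtained by substituting $i'=k-i$ into $j'+(i'-d)/2=(j+(i-d)/2)-i$.

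For $k$ odd, $k-i$ and $i$ have opposite parities, so the second alternative never occurs within the domain of $\Phi_d$, and hence $\Phi_d$ is injective. Its domain has $k\cdot\frac{k+1}{2}$ elements, which equals $\abs{\Irr(M^0)}$, so $\Phi_d$ is a bijection onto $\Irr(M^0)$, i.e.\ onto the classes represented by $M^{i,j}$, $0\le j<i\le k$; this gives assertion (1). For $k$ even, $k-i\equiv i\pmod 2$, so $(j-k/2,\,k-i)$ again lies in the domain and is distinct from $(j,i)$ — the first entries differ when $i\ne k/2$, and when $i=k/2$ we have $j-k/2\not\equiv j\pmod k$. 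Hence $\Phi_d$ is exactly two-to-one onto its image, which is contained in the set of classes whose first index is $\equiv d\pmod 2$. Comparing cardinalities: for $d=0$ the domain has $k(\tfrac{k}{2}+1)=k(k+2)/2$ elements, so the image has $k(k+2)/4$ of them, while among the representatives $M^{i,j}$, $0\le j<i\le k$, those with $i$ even number $2+4+\cdots+k=k(k+2)/4$; for $d=1$ the domain has $k\cdot\tfrac{k}{2}=k^2/2$ elements, the image has $k^2/4$, and the representatives with $i$ odd number $1+3+\cdots+(k-1)=k^2/4$. So the image of $\Phi_d$ is all of the relevant parity class, covered exactly twice. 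Finally, for fixed $j$, two pairs $(j,i)$, $(j,i')$ could satisfy $\Phi_d(j,i)\cong\Phi_d(j,i')$ only through the second alternative with $j=j-k/2$, which is impossible; hence $M^{i,\,j+(i-d)/2}$, $i\equiv d\pmod 2$, are pairwise inequivalent for each fixed $j$. This proves assertion (2).

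The one step that is not pure bookkeeping is the first paragraph — verifying that \eqref{eq:equiv-Mij} accounts for \emph{all} coincidences among the $M^{i,j}$ — and that is where I expect the only real work to lie; once it is settled, the remainder is parity arithmetic together with a dimension count against property (6).
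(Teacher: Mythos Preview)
Your argument is correct and follows the same route as the paper, which simply records that the proposition ``is a consequence of \eqref{eq:equiv-Mij}'' and gives no further proof. You have supplied the details the paper omits: the key observation that the involution $(i,j)\mapsto(k-i,j-i)$ on $\{0,\dots,k\}\times\Z_k$ is fixed-point free and hence, by the count in property~(6), generates the \emph{entire} isomorphism relation among the $M^{i,j}$; the rest is the parity and cardinality bookkeeping you carry out.
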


\section{$\Gamma_D$ and $M_D$ for a $\Z_k$-code $D$}
\label{sec:gamma_D-M_D}

In this section, we define a vertex operator algebra or a vertex operator superalgebra 
$M_D$ for a $\Z_k$-code $D$. 
The arguments are essentially the same as in Section 3 of \cite{AYY2016}. 

Let $\ell$ be a fixed positive integer. 
A $\Z_k$-code of length $\ell$ means an additive subgroup of $(\Z_k)^\ell$. 
We denote by $(\, \cdot\, | \, \cdot \,)$ the standard inner product 
$(\xi | \eta) = \xi_1 \eta_1 + \cdots + \xi_\ell \eta_\ell \in \Z_k$ 
for $\xi = (\xi_1, \ldots, \xi_\ell),\, \eta = (\eta_1,\ldots, \eta_\ell) \in (\Z_k)^\ell$. 

For simplicity of notation, set 
$N^{(j)} = N(j,(0,\ldots,0)) = N + 2j\lambda_k$, $j \in \Z_k$.
We consider a coset $N(\xi)$ of $N^\ell$ in $(N^\circ)^{\ell}$ defined by 
\begin{equation}\label{N_xi}
  N(\xi) = \{ (x_1,\ldots,x_\ell) \mid x_r \in N^{(\xi_r)}, 1 \le r \le \ell \} 
  \subset (N^\circ)^{\ell}
\end{equation}
for $\xi = (\xi_1, \ldots, \xi_\ell) \in (\Z_k)^\ell$. 
Since $\la \al,\be \ra \in -2ij/k + 2\Z$ for $\al \in N^{(i)}$, 
$\beta \in N^{(j)}$, 
we have
\begin{equation}\label{eq:inner_prod_in_GammaD}
  \la \al,\be \ra \in - \frac{2}{k}(\xi | \eta) + 2\Z 
  \quad \text{for } \al \in N(\xi), \be \in N(\eta).
\end{equation}

Let $D$ be a $\Z_k$-code of length $\ell$.
We consider two cases.

\medskip\noindent
{\bfseries Case A.} \  
$(\xi | \xi) = 0$ for all $\xi \in D$.

\medskip\noindent
{\bfseries Case B.} \ 
$k$ is even, 
$(\xi | \eta) \in \{0, k/2\}$ for all $\xi, \eta \in D$, and 
$(\xi | \xi) = k/2$ for some $\xi \in D$.

\medskip
Let
\begin{equation}\label{eq:GammaD}
\Gamma_D = \bigcup_{\xi \in D} N(\xi) \subset (N^\circ)^{\ell}, 
\end{equation}
which is a sublattice of $(N^\circ)^{\ell}$, 
as $N(\xi) + N(\eta) = N(\xi + \eta)$ and $D$ is an additive subgroup of $(\Z_k)^\ell$. 
The following lemma holds by \eqref{eq:inner_prod_in_GammaD}.

\begin{lemma}\label{lem:GD}
\textup{(1)} $\Gamma_D$ is a positive definite even lattice 
if and only if $D$ is in Case A.

\textup{(2)} $\Gamma_D$ is a positive definite odd lattice if and only if 
If $k$ is even and $D$ is in Case B. 
\end{lemma}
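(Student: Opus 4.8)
The plan is to compute the possible squared norms $\la \al, \al \ra$ for $\al \in \Gamma_D$ and then read off when the lattice is even, odd, or neither. By \eqref{eq:GammaD}, any $\al \in \Gamma_D$ lies in $N(\xi)$ for a unique $\xi \in D$ (the cosets $N(\xi)$ are distinct since $D$ is a subgroup and the map $\xi \mapsto N(\xi) + N^\ell$ is injective modulo $N^\ell$; more precisely each $N(\xi)$ is a coset of $N^\ell$ and two such cosets are equal only if $\xi = \eta$ by Lemma \ref{lem:dual_N_3} applied coordinatewise with $\bsa = \0$). Taking $\eta = \xi$ in \eqref{eq:inner_prod_in_GammaD} gives
\[
  \la \al, \al \ra \in -\frac{2}{k}(\xi \mid \xi) + 2\Z
  \qquad \text{for all } \al \in N(\xi), \ \xi \in D.
\]
Since $(\xi \mid \xi)$ is only well-defined modulo $k$, note $-\frac{2}{k}(\xi\mid\xi) + 2\Z$ depends only on $(\xi\mid\xi) \bmod k$, so this is consistent. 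The lattice $\Gamma_D$ is contained in $(N^\circ)^\ell$, which is positive definite because $N \subset L$ is positive definite and hence so is $N^\circ$; thus $\Gamma_D$ is automatically positive definite, and only the parity of the norms is in question.

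For part (1): if $D$ is in Case A, then $(\xi\mid\xi) = 0$ in $\Z_k$ for every $\xi \in D$, so $\la\al,\al\ra \in 2\Z$ for every $\al \in \Gamma_D$, i.e. $\Gamma_D$ is even. Conversely, if $\Gamma_D$ is even, then for each $\xi \in D$ we may pick $\al \in N(\xi)$ (the coset is nonempty) and conclude $-\frac{2}{k}(\xi\mid\xi) \in 2\Z$, which forces $(\xi\mid\xi) \equiv 0 \pmod k$, i.e. $D$ is in Case A. Here I should check that membership ``$\la\al,\al\ra \in 2\Z$ for all $\al$'' is not vacuous and that $-\frac{2}{k}(\xi\mid\xi) \in 2\Z \iff k \mid (\xi\mid\xi)$; the latter is immediate since $\gcd$ considerations give $-\frac{2}{k}m \in 2\Z \iff k \mid m$.

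For part (2): if $k$ is even and $D$ is in Case B, then for every $\xi \in D$ we have $(\xi\mid\xi) \in \{0, k/2\}$, so $\la\al,\al\ra \in 2\Z$ when $(\xi\mid\xi) = 0$ and $\la\al,\al\ra \in -1 + 2\Z = 2\Z+1$ when $(\xi\mid\xi) = k/2$ (using $-\frac{2}{k}\cdot\frac{k}{2} = -1$); since Case B requires $(\xi\mid\xi) = k/2$ for at least one $\xi$, the lattice contains vectors of odd norm and is therefore odd (not even). Conversely, suppose $\Gamma_D$ is a positive definite odd lattice. Then there exists $\al \in \Gamma_D$, say $\al \in N(\xi)$, with $\la\al,\al\ra$ odd; by the norm formula this forces $-\frac{2}{k}(\xi\mid\xi)$ to be a half-integer that is not an integer... wait, it forces $-\frac{2}{k}(\xi\mid\xi) \in 2\Z + 1$, hence $\frac{2}{k}(\xi\mid\xi) \in 2\Z+1$ is an odd integer; writing $(\xi\mid\xi) \equiv m \pmod k$ with $0 \le m < k$, this says $2m/k$ is an odd integer, so $k \mid 2m$ and $2m/k$ odd. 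Since $0 \le 2m < 2k$, the only possibility is $2m = k$, i.e. $m = k/2$, which in particular requires $k$ even and $(\xi\mid\xi) = k/2$ for this $\xi$. This already gives that $k$ is even and that some codeword has $(\xi\mid\xi) = k/2$. The remaining Case B conditions — that $(\xi\mid\eta) \in \{0,k/2\}$ for all $\xi,\eta \in D$ and $(\xi\mid\xi) \in \{0,k/2\}$ for all $\xi \in D$ — need not follow from oddness alone; I expect the statement intends the \emph{necessary} direction for ``$\Gamma_D$ odd'' to be read together with the standing assumption that $D$ is in Case A or Case B, so the content is: among the two cases, $\Gamma_D$ is even iff Case A and odd iff Case B. I would phrase the proof accordingly, deriving the norm formula once and then splitting by case, and flag that the genuinely substantive point is the elementary number-theoretic equivalence $-\frac{2}{k}m \in 2\Z$ (resp. $\in 2\Z+1$) $\iff$ $m \equiv 0$ (resp. $m \equiv k/2$) $\pmod k$.

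The main obstacle is mostly bookkeeping: making precise that the different cosets $N(\xi)$, $\xi \in D$, are genuinely distinct so that the norm of $\al$ is controlled by a well-defined $\xi$, and being careful that $(\xi\mid\xi)$ is an element of $\Z_k$ while the norm formula involves a rational number — one must check the formula $-\frac{2}{k}(\xi\mid\xi) + 2\Z$ is independent of the integer representative chosen for $(\xi\mid\xi)$, which holds because changing the representative by $k$ changes $-\frac{2}{k}(\xi\mid\xi)$ by $-2 \in 2\Z$. Everything else is a direct consequence of \eqref{eq:inner_prod_in_GammaD} and positive-definiteness of $(N^\circ)^\ell$.
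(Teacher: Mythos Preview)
Your approach is exactly the paper's: the paper's entire proof is the one-line citation of \eqref{eq:inner_prod_in_GammaD}, and your write-up is a correct elaboration of that for part~(1) and for the forward direction of part~(2).

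There is, however, a real gap in your converse for part~(2). You correctly deduce from the existence of an odd-norm vector that $k$ is even and $(\xi\mid\xi)=k/2$ for some $\xi\in D$, but then you abandon the remaining Case~B condition $(\xi\mid\eta)\in\{0,k/2\}$ for all $\xi,\eta\in D$, conjecturing that it must be a standing hypothesis. It is not: it follows directly from the hypothesis. An \emph{odd lattice} is by definition an \emph{integral} lattice that is not even, so the assumption ``$\Gamma_D$ is a positive definite odd lattice'' already forces $\la\alpha,\beta\ra\in\Z$ for every $\alpha\in N(\xi)$, $\beta\in N(\eta)$. Now apply \eqref{eq:inner_prod_in_GammaD} with a general pair $\xi,\eta\in D$ (not just $\eta=\xi$): integrality gives $\tfrac{2}{k}(\xi\mid\eta)\in\Z$, and the same elementary argument you used on the diagonal (write the representative $m$ with $0\le m<k$; then $k\mid 2m$ forces $m=0$ or $m=k/2$) yields $(\xi\mid\eta)\in\{0,k/2\}$. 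This closes the converse with no extra assumption on $D$. The same remark, incidentally, sharpens your part~(1): in the ``only if'' direction you should also note that evenness implies integrality, which is what guarantees \emph{a priori} that $\la\alpha,\alpha\ra$ is an integer before you test its parity.
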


If $D$ is in Case A, then $V_{\Gamma_D}$ is a vertex operator algebra.
If $k$ is even and $D$ is in Case B, 
we set 
\[
D^0 = \{ \xi \in D \mid (\xi | \xi) = 0 \}, \quad
D^1 = \{ \xi \in D \mid (\xi | \xi) = k/2 \}.
\]
We also set $\Gamma_{D^p} =  \bigcup_{\xi \in D^p} N(\xi)$, $p = 0,1$.
Then $D^0$ is a subgroup of the additive group $D$ of index two, and 
$D = D^0 \cup D^1$ is the coset decomposition of $D$ by $D^0$. 
Moreover, 
$\Gamma_{D^p} = \{ \al \in \Gamma_D \mid \la \al, \al \ra \in p + 2\Z \}$, $p = 0,1$, 
and $\Gamma_D = \Gamma_{D^0} \cup \Gamma_{D^1}$   
with $\Gamma_{D^0}$ an even sublattice. 
We have that  
$V_{\Gamma_D} = V_{\Gamma_{D^0}} \oplus V_{\Gamma_{D^1}}$ 
is a vertex operator superalgebra.  

It follows from \eqref{N_xi} that
$V_{N(\xi)} = V_{N^{(\xi_1)}} \otimes \cdots \otimes V_{N^{(\xi_\ell)}} 
\subset (V_{N^\circ})^{\ell}$.
We also have 
$V_{\Gamma_D} = \bigoplus_{\xi \in D} V_{N(\xi)}$ by \eqref{eq:GammaD}.
Let 
\begin{equation*}%\label{eq:M_xi-def}
  M_\xi = \{ v \in V_{N(\xi)} \mid (\om_{T^{\otimes \ell}})_{(1)} v = 0\},
\end{equation*}
where 
$\om_{T^{\otimes \ell}}$ is the conformal vector of the 
vertex operator subalgebra $T^{\otimes \ell}$ of $(V_N)^{\otimes \ell}$.
Then 
$M_\xi = M^{\xi_1} \otimes \cdots \otimes M^{\xi_\ell}$ 
for $\xi = (\xi_1, \ldots, \xi_\ell) \in (\Z_k)^\ell$ by Proposition \ref{prop:Mj_in_V_Nj0}, 
which is a simple current for
$M_\0 =  (M^{0})^{\otimes \ell}$ 
with $\0 = (0,\ldots,0)$ the zero codeword. 
Since $u_{(n)} v \in V_{N(\xi + \eta)}$ for $u \in V_{N(\xi)}$, $v \in V_{N(\eta)}$, $n \in \Z$, 
we have 
$u_{(n)} v \in M_{\xi + \eta}$ for $u \in M_\xi$, $v \in M_\eta$, $n \in \Z$.
Thus $M_\xi \boxtimes_{M_\0} M_\eta = M_{\xi + \eta}$ for $\xi, \eta \in (\Z_k)^\ell$, and 
$\SC{M_{\0}} = \{ M_\xi \mid \xi \in (\Z_k)^\ell \}$ is $(\Z_k)^\ell$-graded. 
The top level of $M_\xi$ is one dimensional with  
$h(M_\xi) = \big( \sum_{r=1}^\ell \xi_r \big) - (\xi | \xi)/k$, 
as $h(M^j) = j - j^2/k$, 
where $\xi_r$ and $(\xi | \xi)$ are considered to be nonnegative integers.

We have the next proposition by the properties of $M^0 = K(\mfsl_2,k)$ in 
Section \ref{sec:Ksl2k}. 

\begin{proposition}\label{prop:properties_M_D}
$M_\0 =  (M^{0})^{\otimes \ell}$ is a simple, self-dual, rational, and $C_2$-cofinite 
vertex operator algebra of CFT-type with central charge $2\ell (k-1)/(k+2)$. 
Any irreducible $M_\0$-module except for $M_\0$ itself has positive conformal weight.
\end{proposition}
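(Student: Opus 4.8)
The plan is to derive everything from the corresponding facts for the single parafermion algebra $M^0 = K(\mathfrak{sl}_2,k)$ recorded in items (1)--(9) of Section \ref{sec:Ksl2k}, using the standard behaviour of tensor products of vertex operator algebras.

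First, for the structural properties of $M_\0 = (M^0)^{\otimes\ell}$: by item (1) of Section \ref{sec:Ksl2k}, $M^0$ is simple, self-dual, rational, $C_2$-cofinite, and of CFT-type with central charge $2(k-1)/(k+2)$. Each of these properties is preserved under finite tensor products. Simplicity of a tensor product of simple vertex operator algebras is \cite[Proposition 4.7.2]{FHL1993}; self-duality is immediate since the contragredient of a tensor product is the tensor product of the contragredients, and $M^0 \cong (M^0)'$. Rationality and $C_2$-cofiniteness of tensor products follow from \cite{DLM1997} (or one may invoke \cite[Theorem 2.14]{Yamauchi2004} applied to the trivial code, although the direct tensor-product statement is cleaner). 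CFT-type is clear: the weight-$0$ space of $(M^0)^{\otimes\ell}$ is the tensor product of the weight-$0$ spaces, hence one dimensional, and there are no negative weights. Finally, the central charge is additive over tensor factors, giving $2\ell(k-1)/(k+2)$. The $k=2$ case, where $M^0 = L(1/2,0)$, is subsumed since all these properties hold there as well.

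For the statement about conformal weights of irreducible modules: by rationality of $M^0$ (and hence of $M_\0$), every irreducible $M_\0$-module is of the form $P_1 \otimes \cdots \otimes P_\ell$ with each $P_r$ an irreducible $M^0$-module, by \cite[Proposition 4.7.2]{FHL1993} or \cite[Theorem 4.7.4]{FHL1993}. The conformal weight is then $h(P_1 \otimes \cdots \otimes P_\ell) = h(P_1) + \cdots + h(P_\ell)$. By item (7) of Section \ref{sec:Ksl2k}, $h(P_r) \ge 0$ for every irreducible $M^0$-module, with $h(P_r) = 0$ precisely when $P_r = M^0$. (For $k=2$ the same holds for $L(1/2,0)$, whose irreducible modules have conformal weights $0, 1/2, 1/16$.) Hence the total conformal weight is nonnegative, and it is $0$ only when every factor equals $M^0$, i.e.\ only for $M_\0$ itself. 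Therefore every irreducible $M_\0$-module other than $M_\0$ has strictly positive conformal weight.

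There is no serious obstacle here; the proposition is essentially a bookkeeping consequence of the single-variable facts together with the known good behaviour of the relevant properties under tensor products. The only point requiring a little care is citing the correct reference for the classification of irreducible modules over a tensor product of rational vertex operator algebras (needed so that ``irreducible $M_\0$-module'' really means ``tensor product of irreducible $M^0$-modules''), but this is standard. One should also note explicitly that the $k=2$ case is included, since Section \ref{sec:Ksl2k} otherwise assumes $k \ge 3$; this is handled simply by recalling the analogous properties of $L(1/2,0)$.
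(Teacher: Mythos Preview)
Your proposal is correct and follows exactly the same approach as the paper: the paper's proof is the single sentence ``We have the next proposition by the properties of $M^0 = K(\mathfrak{sl}_2,k)$ in Section~\ref{sec:Ksl2k},'' and you simply unpack this by citing the relevant tensor-product facts and the positivity of conformal weights from item~(7). Your additional remark covering the $k=2$ case is a useful clarification that the paper leaves implicit.
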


Let $M_D$ be the commutant of $T^{\otimes \ell}$ in $V_{\Gamma_D}$. 
Then
\begin{equation}\label{eq:M_D}
  M_D = \{ v \in V_{\Gamma_D} \mid (\om_{T^{\otimes \ell}})_{(1)} v = 0\} 
  = \bigoplus_{\xi \in D} M_\xi, 
\end{equation}
which is a $D$-graded simple current extension of $M_\0$. 
The following theorem holds.
\begin{theorem}\label{thm:MD}
\textup{(1)} 
If $D$ is in Case A, then $M_D$ is a simple, self-dual, rational, and $C_2$-cofinite vertex 
operator algebra of CFT-type with central charge $2\ell (k-1)/(k+2)$. 

\textup{(2)} 
If $k$ is even and $D$ is in Case B, 
then $M_D = M_{D^0} \oplus M_{D^1}$ is a simple vertex operator superalgebra, 
whose even part $M_{D^0}$ and odd part $M_{D^1}$ are given by 
$M_{D^p} = \bigoplus_{\xi \in D^p} M_\xi$, $p = 0,1$, 
and $h(M_{D^1}) \in \Z + 1/2$.  
\end{theorem}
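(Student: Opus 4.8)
The plan is to deduce Theorem \ref{thm:MD} from the general machinery of simple current extensions recalled in Section \ref{sec:preliminaries}, together with the properties of $M^0 = K(\mfsl_2,k)$ listed in Section \ref{sec:Ksl2k}. The starting point is the identification \eqref{eq:M_D}, which exhibits $M_D = \bigoplus_{\xi \in D} M_\xi$ as a $D$-graded simple current extension of $M_\0 = (M^0)^{\otimes \ell}$: indeed, by Proposition \ref{prop:properties_M_D} the base algebra $M_\0$ is simple, self-dual, rational, $C_2$-cofinite, and of CFT-type, and we have already observed that $\SC{M_\0} = \{M_\xi \mid \xi \in (\Z_k)^\ell\}$ is $(\Z_k)^\ell$-graded with $M_\xi \boxtimes_{M_\0} M_\eta = M_{\xi+\eta}$, so its restriction to the subgroup $D$ gives a $D$-graded family of simple currents with $M_\0$ as the identity object.

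For part (1), where $D$ is in Case A, first I would verify that $h(M_\xi) \in \Z$ for every $\xi \in D$. Using the formula $h(M_\xi) = \big(\sum_{r=1}^\ell \xi_r\big) - (\xi|\xi)/k$ with the $\xi_r$ taken as integers in $\{0,\dots,k-1\}$, the first summand is an integer and, since $D$ is in Case A, $(\xi|\xi) = 0$ in $\Z_k$, so $(\xi|\xi)$ as a nonnegative integer is a multiple of $k$ and the second term is an integer as well; hence $h(M_\xi) \in \Z$. Then the abelian-intertwining-algebra structure of \cite[Theorem 4.1]{vEMS2017} collapses to an honest vertex operator algebra structure by \cite[Theorem 4.2]{vEMS2017}, or equivalently one invokes \cite[Theorem 3.12]{CKL2015}: a $D$-graded simple current extension with all conformal weights in $\Z$ carries a unique vertex operator algebra structure extending the $M_\0$-module structure, and by \cite[Theorem 2.14]{Yamauchi2004} this $M_D$ is automatically simple, self-dual, rational, $C_2$-cofinite, and of CFT-type. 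The central charge is additive over the tensor product, so it equals $\ell \cdot 2(k-1)/(k+2)$ as claimed, and it is unchanged by the extension.

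For part (2), where $k$ is even and $D$ is in Case B, I would run the same argument but tracking parities. By Lemma \ref{lem:GD}(2) the lattice $\Gamma_D$ is odd, and we have recorded that $\Gamma_D = \Gamma_{D^0} \cup \Gamma_{D^1}$ with $\Gamma_{D^0}$ an even sublattice of index two, so $V_{\Gamma_D} = V_{\Gamma_{D^0}} \oplus V_{\Gamma_{D^1}}$ is a vertex operator superalgebra; taking the commutant of $T^{\otimes \ell}$ is a fixed-point-type construction that respects this $\Z_2$-grading, giving $M_D = M_{D^0} \oplus M_{D^1}$ with $M_{D^p} = \bigoplus_{\xi \in D^p} M_\xi$. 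To see that $M_{D^0}$ is the even part and $M_{D^1}$ the odd part, I would again use $h(M_\xi) = \big(\sum_r \xi_r\big) - (\xi|\xi)/k$: for $\xi \in D^0$ one has $(\xi|\xi) \equiv 0 \pmod{k}$ so $h(M_\xi) \in \Z$, while for $\xi \in D^1$ one has $(\xi|\xi) \equiv k/2 \pmod k$, hence $(\xi|\xi)/k \in \frac12 + \Z$ and $h(M_\xi) \in \Z + 1/2$; in particular $h(M_{D^1}) \in \Z + 1/2$. Simplicity of $M_D$ as a vertex operator superalgebra then follows from \cite[Theorem 3.12]{CKL2015} and the uniqueness statement \cite[Proposition 5.3]{DM2004}, exactly as in the untwisted case.

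The one point that requires a little care — and is the main thing to get right rather than the main difficulty — is that the commutant operation $\Com_{V_{\Gamma_D}}(T^{\otimes\ell})$ really does produce the direct sum $\bigoplus_{\xi\in D} M_\xi$ with its natural module structures patched into a (super)algebra, i.e.\ that the abstract simple current extension of $M_\0$ and the concretely realized commutant coincide. This is handled by the computation already summarized in \eqref{eq:M_D}: one uses $V_{\Gamma_D} = \bigoplus_{\xi\in D} V_{N(\xi)}$, the tensor decomposition $V_{N(\xi)} = V_{N^{(\xi_1)}}\otimes\cdots\otimes V_{N^{(\xi_\ell)}}$, and Proposition \ref{prop:Mj_in_V_Nj0} to identify the joint kernel of $(\om_{T^{\otimes\ell}})_{(1)}$ componentwise as $M^{\xi_1}\otimes\cdots\otimes M^{\xi_\ell}$, and the closure property $u_{(n)}v \in M_{\xi+\eta}$ for $u\in M_\xi$, $v\in M_\eta$ shows the vertex operators restrict correctly. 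Once this identification is in place, parts (1) and (2) are immediate consequences of the cited general theorems about simple current extensions, with the parity bookkeeping above supplying the super-structure in Case B.
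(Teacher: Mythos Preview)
Your proposal is correct and follows essentially the same approach as the paper: the theorem is stated there as an immediate consequence of the preceding discussion, and you have simply made that discussion explicit, assembling Proposition \ref{prop:properties_M_D}, the conformal-weight formula $h(M_\xi) = \big(\sum_r \xi_r\big) - (\xi|\xi)/k$, the identification \eqref{eq:M_D}, and the general simple current extension results of Section \ref{subsec:irred_VD-modules} (notably \cite[Theorem 3.12]{CKL2015}, \cite[Proposition 5.3]{DM2004}, and \cite[Theorem 2.14]{Yamauchi2004}) in exactly the way the paper intends. The parity computation for Case B and the compatibility of the commutant with the $\Z_2$-grading of $V_{\Gamma_D}$ are precisely the points the paper's paragraph on Case B is recording.
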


\section{Irreducible $M_D$-modules: Case A}\label{sec:irred_M_D-modules_A}

Let $k \ge 2$, and let $D$ be a $\Z_k$-code of length $\ell$ 
satisfying the condition of Case A in Section \ref{sec:gamma_D-M_D}, 
that is, $(\xi | \xi) = 0$ for all $\xi \in D$. 
In this section, we classify the irreducible $\chi$-twisted $M_D$-modules for 
$\chi \in D^*$. 
We construct all irreducible untwisted $M_D$-modules 
inside $V_{(\Gamma_D)^\circ}$ as well.

\subsection{Linear characters of $D$}\label{subsec:linear_character_of_D}
 
Let 
\begin{equation*}
P(i,j) = k(i - 2j) - (i - 2j)^2 + 2k(i - j + 1)j. 
\end{equation*}
Then $h(M^{i,j}) = P(i,j)/2k(k+2)$ for $0 \le j \le i \le k$ 
by \eqref{eq:top-weight-Mij}. 
In the case where $0 \le i \le j < k$, 
we have $h(M^{i,j}) = P(k-i,j-i)/2k(k+2)$ by \eqref{eq:equiv-Mij}.
We calculate the values of the map 
$b_{M^0} : \SC{M^0} \times \Irr(M^0) \to \Q/\Z$ defined by  
\begin{equation*}
  b_{M^0}(M^p, M^{i,j}) = h(M^p \boxtimes_{M^0} M^{i,j}) - h(M^p) - h(M^{i,j}) + \Z,
\end{equation*} 
where $M^p \boxtimes_{M^0} M^{i,j} = M^{i,j+p}$ by \eqref{eq:fusion_rule_for_M0}.
If $0 \le j < i \le k$, then $0 \le j < j+1 \le i \le k$, and
\begin{equation*}
  P(i,j+1) - P(i,j) = 2(k+2)(i-2j-1), 
\end{equation*}
whereas if $0 \le i \le j < k$, then $0 \le j-i < j+1-i \le k-i \le k$, 
and
\begin{equation*}
  P(k-i,j+1-i) -  P(k-i,j-i) = 2(k+2)(i-2j+k-1).
\end{equation*}

In both cases, we have $b_{M^0}(M^1, M^{i,j}) = (i-2j)/k + \Z$. 
Thus
\begin{equation}\label{eq:b_paraf}
b_{M^0}(M^p, M^{i,j}) = \frac{p(i-2j)}{k} + \Z
\end{equation}
for $0 \le i \le k$, $0 \le j < k$, and  $0 \le p < k$ 
by Lemma \ref{lem:bV-bilinear-rev}.  

For $\mu = (\mu_1,\ldots,\mu_\ell)$ with $0 \le \mu_r \le k$, $1 \le r \le \ell$, 
and $\nu = (\nu_1,\ldots, \nu_\ell) \in (\Z_k)^\ell$,  
let
\begin{equation*}
  M_{\mu,\nu} = M^{\mu_1,\nu_1} \otimes \cdots \otimes M^{\mu_\ell,\nu_\ell}.
\end{equation*}
Then 
\begin{equation}\label{eq:Irr_M_0}
  \Irr(M_{\0}) 
  = \{ M_{\mu,\nu} \mid \mu = (\mu_1,\ldots,\mu_\ell), 
  \nu = (\nu_1,\ldots, \nu_\ell), 
  0 \le \nu_r < \mu_r \le k, 1 \le r \le \ell \}.
\end{equation}
It follows from \eqref{eq:fusion_rule_for_M0} that 
\begin{equation}\label{eq:M_xi_times}
  M_\xi \boxtimes_{M_\0} M_{\mu,\nu} = M_{\mu,\nu + \xi}.
\end{equation}

Let $b_{M_{\0}} : \SC{M_{\0}} \times \Irr(M_{\0}) \to \Q/\Z$ be a map defined by  
\begin{equation*}
  b_{M_{\0}}(M_\xi, M_{\mu,\nu}) = 
  h(M_\xi \boxtimes_{M_\0} M_{\mu,\nu}) - h(M_\xi) - h(M_{\mu,\nu}) + \Z
\end{equation*}
for $\mu = (\mu_1,\ldots,\mu_\ell)$ with $0 \le \mu_r \le k$, 
$1 \le r \le \ell$,   
$\nu = (\nu_1,\ldots, \nu_\ell) \in (\Z_k)^\ell$, 
and $\xi = (\xi_1,\ldots, \xi_\ell) \in (\Z_k)^\ell$.
Then 
\eqref{eq:b_paraf} implies that
\begin{equation}\label{eq:b_MD-1}
  b_{M_\0}(M_\xi,M_{\mu,\nu}) = \sum_{r=1}^\ell  \frac{\xi_r(\mu_r - 2\nu_r)}{k} + \Z.
\end{equation}

Although $\mu_r$ is an integer between $0$ and $k$, 
we can treat $\mu_r$ modulo $k$ on the right hand side of \eqref{eq:b_MD-1}. 
Then \eqref{eq:b_MD-1} is written as
\begin{equation}\label{eq:b_MD-2}
  b_{M_\0}(M_\xi, M_{\mu,\nu}) = \frac{1}{k}(\xi | \mu - 2\nu) + \Z,
\end{equation}
where $(\, \cdot \,|\, \cdot \,)$ is the standard inner product on $(\Z_k)^\ell$. 
In particular,
\begin{equation}\label{eq:b_MD-3}
  b_{M_\0}(M_\xi, M_\eta) = - \frac{2}{k}(\xi | \eta) + \Z.
\end{equation}

\begin{lemma}\label{lem:bM0}
Let $\xi, \eta, \nu \in (\Z_k)^\ell$, 
and let $\mu = (\mu_1,\ldots,\mu_\ell)$ with $0 \le \mu_r \le k$, $1 \le r \le \ell$.

\textup{(1)} $b_{M_\0}(M_\xi, M_\eta) = 0$ if $\xi, \eta \in D$.

\textup{(2)} $b_{M_\0}(M_{\xi+\eta}, M_{\mu, \nu}) 
= b_{M_\0}(M_\xi, M_{\mu, \nu}) + b_{M_\0}(M_\eta, M_{\mu, \nu})$.

\textup{(3)} $b_{M_\0}(M_\xi, M_{\mu, \nu+\eta}) 
= b_{M_\0}(M_\xi, M_\eta) + b_{M_\0}(M_\xi, M_{\mu, \nu})$. 
\end{lemma}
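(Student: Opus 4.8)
The plan is to deduce Lemma \ref{lem:bM0} directly from the explicit formula \eqref{eq:b_MD-2} together with the case hypothesis on $D$, so that no further representation theory is needed; all three parts reduce to elementary manipulations with the standard inner product on $(\Z_k)^\ell$ modulo $\Z$.

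For part (1), I would specialize \eqref{eq:b_MD-3}: for $\xi,\eta\in D$ we have $b_{M_\0}(M_\xi,M_\eta) = -\tfrac{2}{k}(\xi|\eta) + \Z$, and the only input required is that $(\xi|\eta)\in k\Z$ whenever $\xi,\eta\in D$. This follows from Case A: since $(\zeta|\zeta)=0$ in $\Z_k$ for every $\zeta\in D$, applying this to $\zeta=\xi$, $\zeta=\eta$, and $\zeta=\xi+\eta$ and expanding $(\xi+\eta|\xi+\eta) = (\xi|\xi) + 2(\xi|\eta) + (\eta|\eta)$ gives $2(\xi|\eta)\equiv 0\pmod k$. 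Hence $\tfrac{2}{k}(\xi|\eta)\in\Z$, and part (1) follows. (Note the factor of $2$ is harmless here because it already appears in \eqref{eq:b_MD-3}.)

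For parts (2) and (3), I would argue straight from \eqref{eq:b_MD-2}. Bilinearity of the standard inner product in its first argument gives $(\xi+\eta\,|\,\mu-2\nu) = (\xi\,|\,\mu-2\nu) + (\eta\,|\,\mu-2\nu)$, and dividing by $k$ and reducing mod $\Z$ yields part (2) immediately. For part (3), replace $\nu$ by $\nu+\eta$ in \eqref{eq:b_MD-2}: then $\mu - 2(\nu+\eta) = (\mu-2\nu) - 2\eta$, so $b_{M_\0}(M_\xi, M_{\mu,\nu+\eta}) = \tfrac{1}{k}(\xi\,|\,\mu-2\nu) - \tfrac{2}{k}(\xi\,|\,\eta) + \Z$, and the two terms are exactly $b_{M_\0}(M_\xi,M_{\mu,\nu})$ and $b_{M_\0}(M_\xi,M_\eta)$ by \eqref{eq:b_MD-2} and \eqref{eq:b_MD-3} respectively. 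Alternatively, one could invoke Lemma \ref{lem:bV-bilinear-rev} applied to $V = M_\0$, using $M_\xi\boxtimes_{M_\0}M_\eta = M_{\xi+\eta}$ and \eqref{eq:M_xi_times}; this gives (2) and (3) without touching the explicit formula at all, which is perhaps the cleaner route.

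I do not anticipate a genuine obstacle: the content is bookkeeping. The one point that needs a word of care is the passage between the "honest integer" convention for $\mu_r$ (with $0\le\mu_r\le k$) used in \eqref{eq:b_MD-1} and the "mod $k$" convention used in \eqref{eq:b_MD-2}; the text has already observed that this is legitimate because $\mu_r$ enters \eqref{eq:b_MD-1} only through $\xi_r(\mu_r - 2\nu_r)/k \bmod \Z$, so shifting $\mu_r$ by a multiple of $k$ changes the summand by an integer. Once that is granted, parts (2) and (3) are purely formal, and part (1) needs only the elementary congruence $2(\xi|\eta)\equiv 0 \pmod k$ extracted from Case A as above.
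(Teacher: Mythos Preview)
Your proposal is correct and follows essentially the same route as the paper: for (1) the paper also expands $(\xi+\eta\,|\,\xi+\eta)=0$ under Case A to obtain $2(\xi|\eta)=0$ in $\Z_k$ and then invokes \eqref{eq:b_MD-3}, and for (2) and (3) it likewise says the assertions are clear from \eqref{eq:b_MD-2} (with a reference to Lemma \ref{lem:bV-bilinear-rev} as an alternative). Your write-up simply spells out a bit more of the bookkeeping than the paper does.
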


\begin{proof}
Suppose $\xi, \eta \in D$. 
Then $\xi + \eta \in D$, so $(\xi+\eta | \xi+\eta) = 0$ by our assumption on $D$. 
Since $(\xi | \xi) = (\eta | \eta) = 0$, we have $2(\xi | \eta) = 0$. 
Thus the assertion (1) holds by \eqref{eq:b_MD-3}. 
The assertions (2) and (3) are clear from \eqref{eq:b_MD-2}, 
see also Lemma \ref{lem:bV-bilinear-rev}. 
\end{proof}

For $\eta \in (\Z_k)^\ell$,  
let $\chi(\eta)$ be a linear character of 
the abelian group $(\Z_k)^\ell$ given by 
\begin{equation*}
  \chi(\eta) : (\Z_k)^\ell \to \C^\times; \quad 
  \xi \mapsto \exp(2\pi\sqrt{-1} (\xi | \eta)/k).
\end{equation*}
Then $(\Z_k)^\ell \to \Hom((\Z_k)^\ell, \C^\times)$;  
$\eta \mapsto \chi(\eta)$ is a group isomorphism.
The linear character $\chi_{M_{\mu, \nu}} \in D^\ast$ is 
the restriction $\chi(\mu-2\nu)|_D$ of $\chi(\mu-2\nu)$ to $D$ 
by \eqref{eq:b_MD-2}.  That is,
\begin{equation}\label{eq:char_of_D}
  \chi_{M_{\mu, \nu}} (\xi) 
  = \exp(2\pi\sqrt{-1} b_{M_\0}(M_\xi, M_{\mu, \nu}))
  = \exp(2\pi\sqrt{-1} (\xi | \mu - 2\nu)/k).
\end{equation}

Let $D^\perp = \{ \eta \in (\Z_k)^\ell \mid (D | \eta) = 0\}$.
Then $\abs{D} \abs{D^\perp} = \abs{(\Z_k)^\ell}$, as $(\, \cdot \,|\, \cdot \,)$ 
is a non-degenerate bilinear form. 

\begin{lemma}\label{lem:linear_char_of_D}
\textup{(1)} The map $(\Z_k)^\ell \to D^\ast$; $\eta \mapsto \chi(\eta)|_D$ 
is a surjective group homomorphism with kernel $D^\perp$. 

\textup{(2)} For any $\chi \in D^\ast$, 
there exists $M_{\mu, \0} \in \Irr(M_\0)$ such that $\chi = \chi_{M_{\mu, \0}}$. 

\textup{(3)} $\chi_{M_{\mu, \nu}} = 1$; the principal character of $D$ if and only if 
$\mu - 2\nu \in D^\perp$.

\textup{(4)} $\chi_{M_{\mu, \nu}} = \chi_{M_{\mu', \nu'}}$ if and only if 
$\mu - 2\nu \equiv \mu' - 2\nu' \pmod{D^\perp}$.
\end{lemma}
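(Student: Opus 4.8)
The plan is to reduce the whole lemma to one structural fact recalled just before its statement: the standard pairing on $(\Z_k)^\ell$ is nondegenerate, so $\abs{D}\,\abs{D^\perp} = \abs{(\Z_k)^\ell}$, together with the identity $\chi_{M_{\mu,\nu}} = \chi(\mu-2\nu)|_D$ furnished by \eqref{eq:char_of_D}. First I would prove (1). Restriction of characters is a group homomorphism, so $\eta \mapsto \chi(\eta)|_D$ is a homomorphism $(\Z_k)^\ell \to D^\ast$. Its kernel is $\{\eta \mid (\xi | \eta) \equiv 0 \pmod{k}\ \text{for all}\ \xi \in D\}$, which is exactly $D^\perp$ by the definition of $\chi(\eta)$. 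Hence the image is a subgroup of $D^\ast$ of order $\abs{(\Z_k)^\ell}/\abs{D^\perp} = \abs{D} = \abs{D^\ast}$, so the map is onto; this is (1).

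Next I would prove (2). Given $\chi \in D^\ast$, part (1) provides some $\eta = (\eta_1,\dots,\eta_\ell) \in (\Z_k)^\ell$ with $\chi(\eta)|_D = \chi$. For each $r$ choose the unique integer $\mu_r \in \{1,\dots,k\}$ with $\mu_r \equiv \eta_r \pmod{k}$ and put $\mu = (\mu_1,\dots,\mu_\ell)$. Since $0 \le 0 < \mu_r \le k$ for every $r$, the module $M_{\mu,\0}$ lies in $\Irr(M_\0)$ by \eqref{eq:Irr_M_0}; and because the right-hand side of \eqref{eq:char_of_D} depends on each coordinate of $\mu - 2\nu$ only modulo $k$, we get $\chi_{M_{\mu,\0}} = \chi(\mu)|_D = \chi(\eta)|_D = \chi$, as required.

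Finally, (3) and (4) follow formally from (1) and \eqref{eq:char_of_D}. Since $\chi_{M_{\mu,\nu}} = \chi(\mu - 2\nu)|_D$, the character $\chi_{M_{\mu,\nu}}$ is principal exactly when $\mu - 2\nu$ lies in the kernel $D^\perp$ of the map in (1), which is (3); and $\chi_{M_{\mu,\nu}} = \chi_{M_{\mu',\nu'}}$ if and only if $\chi\bigl((\mu-2\nu)-(\mu'-2\nu')\bigr)|_D = 1$, i.e. $(\mu-2\nu)-(\mu'-2\nu') \in D^\perp$, which is (4). I do not expect a real obstacle: the argument is essentially the character theory of the finite abelian group $(\Z_k)^\ell$. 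The one point that warrants a line of care is the bookkeeping in (2) — one must check that the representatives $\mu_r \in \{1,\dots,k\}$ assemble into an honest element of $\Irr(M_\0)$ via \eqref{eq:Irr_M_0} (this is precisely where $\nu = \0$ and $1 \le \mu_r \le k$ enter), rather than merely a residue class modulo $k$.
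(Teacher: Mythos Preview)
Your proposal is correct and follows essentially the same approach as the paper: the paper's proof simply says that non-degeneracy of $(\,\cdot\,|\,\cdot\,)$ yields (1) and (2), while (3) and (4) follow from \eqref{eq:char_of_D} and the definition of $D^\perp$. Your argument is a careful unpacking of exactly this, including the explicit choice of representatives $\mu_r \in \{1,\dots,k\}$ needed to land in $\Irr(M_\0)$ via \eqref{eq:Irr_M_0}, which the paper leaves implicit.
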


\begin{proof}
Non-degeneracy of the bilinear form $(\, \cdot \,|\, \cdot \,)$ implies 
the assertions (1) and (2). 
The assertions (3) and (4) are consequences of 
\eqref{eq:char_of_D} and the definition of $D^\perp$.
\end{proof}

\subsection{Irreducible $M_\0$-modules in $V_{(N^\circ)^\ell}$}
\label{subsec:irred_M_0-modules}

Let 
\begin{equation*}%\label{eq:N-eta-delta}
  N(\eta, \delta) = \{ (x_1, \ldots, x_\ell) \mid x_r \in N(\eta_r, (0, \ldots, 0, d_r)), 
  1 \le r \le \ell \} \subset (N^\circ)^\ell
\end{equation*}
for $\eta = (\eta_1, \ldots, \eta_\ell) \in (\Z_k)^\ell$ and 
$\delta = (d_1,\ldots,d_\ell) \in \{0,1\}^\ell$. 

\begin{proposition}\label{prop:irred_M0-mod_in_coset}
\textup{(1)} 
Let $\eta = (\eta_1, \ldots, \eta_\ell) \in (\Z_k)^\ell$ and 
$\delta = (d_1,\ldots,d_\ell) \in \{0,1\}^\ell$. 
Assume that $\mu = (\mu_1,\ldots,\mu_\ell)$ with $0 \le \mu_r \le k$, $1 \le r \le \ell$, and $\nu = (\nu_1,\ldots,\nu_\ell) \in (\Z_k)^\ell$ satisfy the conditions 
\begin{equation}\label{eq:condition_on_eta_delta}
\mu_r \equiv d_r \pmod{2}, \quad \nu_r = \eta_r + \frac{\mu_r - d_r}{2}, \quad 
1 \le r \le \ell.
\end{equation}
Then 
$V_{N(\eta, \delta)}$ contains the irreducible $M_\0$-module $M_{\mu, \nu}$.

\textup{(2)}
Any irreducible $M_\0$-module is contained in $V_{N(\eta, \delta)}$ for some 
$\eta$ and $\delta$. 
If $k$ is odd, then we can choose $\delta$ to be $\delta = (0,\ldots,0)$.
\end{proposition}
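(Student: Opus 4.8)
The plan is to reduce everything to the rank-one situation analyzed in Sections~\ref{sec:dec-V_Nja} and \ref{sec:irred-mod-in-coset}, and then take the $\ell$-fold tensor product. First I would recall that by definition $V_{N(\eta,\delta)} = V_{N(\eta_1,(0,\dots,0,d_1))} \otimes \cdots \otimes V_{N(\eta_\ell,(0,\dots,0,d_\ell))}$ as a module for $(V_N)^{\otimes\ell}$, so it suffices to treat each tensor factor separately. Fix $r$ and consider the single coset $N(\eta_r,(0,\dots,0,d_r))$. Proposition~\ref{prop:Mij_in_V_Nj0d} with $j = \eta_r$ and $d = d_r$ gives the decomposition of $\{v \in V_{N(\eta_r,(0,\dots,0,d_r))} \mid \omega^s_{(1)}v = 0,\ 1 \le s \le k-2\}$ as $\bigoplus_{0 \le i \le k,\ i \equiv d_r \ (2)} L(c_{k-1}, h^{k-1}_{1,i+1}) \otimes M^{i,\eta_r + (i-d_r)/2}$. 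In particular, taking $i = \mu_r$ (which is allowed precisely because of the parity condition $\mu_r \equiv d_r \pmod 2$ in \eqref{eq:condition_on_eta_delta}), we see that $M^{\mu_r, \eta_r + (\mu_r - d_r)/2} = M^{\mu_r,\nu_r}$ appears as a summand, hence is contained in $V_{N(\eta_r,(0,\dots,0,d_r))}$. Tensoring over $1 \le r \le \ell$ yields $M_{\mu,\nu} = M^{\mu_1,\nu_1} \otimes \cdots \otimes M^{\mu_\ell,\nu_\ell} \subset V_{N(\eta,\delta)}$, which is assertion~(1).

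For assertion~(2), I would argue conversely. Given an arbitrary irreducible $M_\0$-module, by \eqref{eq:Irr_M_0} it has the form $M_{\mu,\nu}$ with $0 \le \nu_r < \mu_r \le k$ for each $r$. I then need to produce $\eta \in (\Z_k)^\ell$ and $\delta \in \{0,1\}^\ell$ satisfying \eqref{eq:condition_on_eta_delta}. For each $r$, simply set $d_r$ to be the parity of $\mu_r$ (so $d_r \in \{0,1\}$ and $\mu_r \equiv d_r \pmod 2$ automatically, and $(\mu_r - d_r)/2 \in \Z$), and set $\eta_r = \nu_r - (\mu_r - d_r)/2 \bmod k$. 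These choices satisfy \eqref{eq:condition_on_eta_delta} by construction, so part~(1) applies and $M_{\mu,\nu} \subset V_{N(\eta,\delta)}$. For the last sentence, when $k$ is odd we want to force $\delta = (0,\dots,0)$, i.e. $d_r = 0$ for all $r$. Here I would not insist on using the literal representative $M_{\mu,\nu}$ with $0 \le \nu_r < \mu_r \le k$, but instead invoke the equivalence \eqref{eq:equiv-Mij}, $M^{i,j} \cong M^{k-i,j-i}$: since $k$ is odd, exactly one of $i$ and $k-i$ is even, so every irreducible $M^0$-module has a representative $M^{i',j'}$ with $i'$ even; applying this in each coordinate replaces $\mu_r$ by an even integer $\mu_r'$ in $\{0,\dots,k\}$, and then the recipe above gives $d_r = 0$ for all $r$. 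Equivalently one can cite Proposition~\ref{prop:appearance_of_Mij}(1), which says that for $d = 0$ the modules $M^{i,j+(i-d)/2}$ as $(i,j)$ range over the relevant set already exhaust all $k(k+1)/2$ irreducible $M^0$-modules, so taking $\eta_r$ appropriately and $d_r = 0$ suffices coordinatewise.

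The only genuine subtlety is bookkeeping of indices: one must check that the module $M^{\mu_r,\nu_r}$ that one reads off from Proposition~\ref{prop:Mij_in_V_Nj0d} really is, up to the standard equivalences, the module one started with, and that the choice $\eta_r = \nu_r - (\mu_r - d_r)/2$ lands in $\Z_k$ consistently (recall the index $j$ of $M^{i,j}$ is taken modulo $k$, so this is harmless). For the odd-$k$ refinement, the point to get right is that the equivalence \eqref{eq:equiv-Mij} genuinely lets us pass to an even first index in each coordinate, which is exactly where oddness of $k$ is used; for even $k$ no such reduction is available and a nonzero $\delta$ can be unavoidable, consistent with the two-sided statement in Proposition~\ref{prop:appearance_of_Mij}(2). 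None of this requires new computation beyond what is already assembled in Theorem~\ref{thm:dec_V_Nja} and Proposition~\ref{prop:appearance_of_Mij}, so the proof is short once the tensor-product reduction is made explicit.
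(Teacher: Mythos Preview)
Your proposal is correct and follows exactly the route the paper indicates: the paper's own proof is the single sentence ``The assertions (1) and (2) hold by Propositions~\ref{prop:Mij_in_V_Nj0d} and \ref{prop:appearance_of_Mij},'' and your argument simply unpacks that citation by doing the tensor-product reduction and index bookkeeping explicitly.
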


\begin{proof}
The assertions (1) and (2) hold by Propositions \ref{prop:Mij_in_V_Nj0d} and  
\ref{prop:appearance_of_Mij}. 
\end{proof}

\begin{lemma}\label{inner_product_cosets}
Let $\xi, \eta \in (\Z_k)^\ell$ and $\delta \in \{0,1\}^\ell$. 
Then $\la x, y \ra \in (\xi | \delta - 2\eta)/k + \Z$ for 
$x \in N(\xi)$ and $y \in N(\eta,\delta)$.
\end{lemma}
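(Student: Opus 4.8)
The plan is to reduce the claim to the single-coordinate case already worked out earlier and then add up the contributions. Recall that $N(\xi) = N^{(\xi_1)} \times \cdots \times N^{(\xi_\ell)}$ with $N^{(j)} = N(j,(0,\ldots,0))$, and $N(\eta,\delta) = N(\eta_1,(0,\ldots,0,d_1)) \times \cdots \times N(\eta_\ell,(0,\ldots,0,d_\ell))$, so for $x = (x_1,\ldots,x_\ell) \in N(\xi)$ and $y = (y_1,\ldots,y_\ell) \in N(\eta,\delta)$ we have $\la x,y\ra = \sum_{r=1}^\ell \la x_r, y_r\ra$, where each $x_r \in N(\xi_r,(0,\ldots,0))$ and $y_r \in N(\eta_r,(0,\ldots,0,d_r))$ inside $N^\circ$.

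So the first step is to establish the one-coordinate statement: for $x' \in N(j,(0,\ldots,0))$ and $y' \in N(j',(0,\ldots,0,d))$, one has $\la x', y'\ra \in (j(d-2j'))/k + \Z$. For this I would use \eqref{eq:N_j_a-bis}, which expresses $N(j,\bsa)$ as $N + \frac12\sum_p a_p\al_p - j\al_k + \frac{2j-\wt(\bsa)}{2k}\gamma$. Taking $\bsa = (0,\ldots,0)$ gives a representative $-j\al_k + \frac{j}{k}\gamma$ for the first coset, and taking $\bsa = (0,\ldots,0,d)$ gives a representative $\frac{d}{2}\al_k - j'\al_k + \frac{2j'-d}{2k}\gamma$ for the second. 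Since $N \perp \gamma$ and $N$ is integral, it suffices to compute the pairing of these two representatives modulo $\Z$. Using $\la\al_k,\al_k\ra = 2$, $\la\al_k,\gamma\ra = \la\al_k,\al_1+\cdots+\al_k\ra = 2$, and $\la\gamma,\gamma\ra = 2k$, this is a short bilinear expansion; the $\al_k$-$\al_k$ and $\al_k$-$\gamma$ terms give integer or known fractional contributions, and collecting the non-integral part yields $(j(d-2j'))/k + \Z$. (Alternatively, and perhaps more cleanly, one can invoke the already-proven \eqref{eq:inner_prod_in_GammaD} together with \eqref{eq:N_j_a-bis}: shifting $N(j',(0,\ldots,0,d))$ by $-\tfrac{d}{2}\al_k$ lands it in $N^{(j')} + (\text{integral shift in }\gamma)$, and one tracks the correction term $\la\cdot, \tfrac{d}{2}\al_k\ra$.)

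The second step is then pure bookkeeping: summing over $r$ gives $\la x,y\ra \in \sum_{r=1}^\ell \eta_r(d_r - 2\,?)/k$... wait, matching the index conventions, $x_r$ sits in the coset indexed by $\xi_r$ and $y_r$ in the coset indexed by $(\eta_r, d_r)$, so the single-coordinate result gives $\la x_r,y_r\ra \in \xi_r(d_r - 2\eta_r)/k + \Z$, and summing yields $\la x,y\ra \in \tfrac1k\sum_{r=1}^\ell \xi_r(d_r - 2\eta_r) + \Z = \tfrac1k(\xi \mid \delta - 2\eta) + \Z$, which is exactly the assertion.

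I do not expect any serious obstacle here; the only care needed is getting the index bookkeeping right (which coset index goes with $x$ and which with $y$, and the sign in $2\lambda_k - \gamma/k = -\al_k$) and confirming that the $\al_k$-$\al_k$ cross term between the two representatives contributes only an integer modulo $\Z$. The mild subtlety is that the representatives from \eqref{eq:N_j_a-bis} live in $N^\circ \oplus (\Z\gamma)^\circ$ rather than in $N^\circ$ itself, so one should either project onto the $N^\circ$-component first or, more safely, note that the $\gamma$-components of $x$ and $y$ as written are actually in $N(\xi)$, $N(\eta,\delta) \subset (N^\circ)^\ell$ — i.e. one works directly with $\lambda_p = \tfrac{1}{2k}\gamma - \tfrac12\al_p \in N^\circ$ via \eqref{def:N_j_a}, so that $N(j,(0,\ldots,0)) = N + 2j\lambda_k$ and $N(j',(0,\ldots,0,d)) = N - d\lambda_k + 2j'\lambda_k = N + (2j'-d)\lambda_k$, and the computation reduces to $\la 2j\lambda_k,\ (2j'-d)\lambda_k\ra = 2j(2j'-d)\la\lambda_k,\lambda_k\ra = 2j(2j'-d)(\tfrac12 - \tfrac1{2k}) = j(2j'-d) - \tfrac{j(2j'-d)}{k} \equiv -\tfrac{j(2j'-d)}{k} = \tfrac{j(d-2j')}{k} \pmod{\Z}$, using $\la\lambda_k,\lambda_k\ra = \tfrac12 - \tfrac1{2k}$ and the integrality of $j(2j'-d)$. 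This last route is the cleanest and is the one I would write up.
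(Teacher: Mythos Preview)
Your proposal is correct and follows exactly the same strategy as the paper: reduce to the single-coordinate statement $\la x', y'\ra \in p(d-2j)/k + \Z$ for $x' \in N^{(p)}$, $y' \in N(j,(0,\ldots,0,d))$, and then sum over the $\ell$ coordinates. The paper's proof simply asserts this single-coordinate fact without justification, whereas you supply the clean verification via the representatives $2p\lambda_k$ and $(2j-d)\lambda_k$ together with $\la\lambda_k,\lambda_k\ra = \tfrac12 - \tfrac{1}{2k}$; this is the natural way to fill in the gap and is exactly the route one would expect.
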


\begin{proof}
Since $\la x, y \ra \in p(d-2j)/k + \Z$ for $x \in N^{(p)}$ 
and $y \in N(j,(0,\ldots,0,d))$, the assertion holds.
\end{proof}

\begin{proposition}\label{prop:untwisted_coset}
Let $\mu = (\mu_1,\ldots,\mu_\ell)$ with $0 \le \mu_r \le k$, $1 \le r \le \ell$, 
and let $\nu = (\nu_1,\ldots,\nu_\ell) \in (\Z_k)^\ell$. 
Take $\eta \in (\Z_k)^\ell$ and 
$\delta \in \{0,1\}^\ell$ such that 
the conditions \eqref{eq:condition_on_eta_delta} hold. 
Then $b_{M_\0}(M_\xi, M_{\mu, \nu}) = 0$ for all $\xi \in D$ if and only if 
$N(\eta, \delta) \subset (\Gamma_D)^\circ$. 
\end{proposition}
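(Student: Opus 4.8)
The plan is to express both conditions in terms of the standard inner product on $(\Z_k)^\ell$ and show they coincide. First I would translate the left-hand condition: by \eqref{eq:b_MD-2} we have $b_{M_\0}(M_\xi, M_{\mu,\nu}) = \tfrac{1}{k}(\xi \mid \mu - 2\nu) + \Z$, so $b_{M_\0}(M_\xi, M_{\mu,\nu}) = 0$ for all $\xi \in D$ is precisely the assertion $(\xi \mid \mu - 2\nu) \in k\Z$ for all $\xi \in D$, i.e.\ $\mu - 2\nu \in D^\perp$.

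Next I would translate the right-hand condition. Recall $\Gamma_D = \bigcup_{\xi \in D} N(\xi)$, so $(\Gamma_D)^\circ = \{ y \in (N^\circ)^\ell \mid \la x, y\ra \in \Z \text{ for all } x \in \Gamma_D\}$, which (since $N(\xi)$ for $\xi\in D$ exhaust the cosets making up $\Gamma_D$) is equivalent to $\la x, y\ra \in \Z$ for all $x \in N(\xi)$, all $\xi \in D$. Now $N(\eta,\delta)$ is a single coset of $N^\ell$, and $\la x, y\ra \bmod \Z$ is constant as $y$ ranges over $N(\eta,\delta)$ and $x$ over $N(\xi)$, because $N^\ell \subset (N^\circ)^\ell$ is integral and $N(\xi), N(\eta,\delta)$ are $N^\ell$-cosets. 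By Lemma \ref{inner_product_cosets}, that common value is $(\xi \mid \delta - 2\eta)/k + \Z$. Hence $N(\eta,\delta) \subset (\Gamma_D)^\circ$ if and only if $(\xi \mid \delta - 2\eta) \in k\Z$ for all $\xi \in D$, i.e.\ $\delta - 2\eta \in D^\perp$.

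It then remains to check $\mu - 2\nu \equiv \delta - 2\eta \pmod{D^\perp}$ — in fact I expect them to be literally equal modulo $k$. Using the hypothesis \eqref{eq:condition_on_eta_delta}, namely $\nu_r = \eta_r + (\mu_r - d_r)/2$ for each $r$, we compute componentwise $\mu_r - 2\nu_r = \mu_r - 2\eta_r - (\mu_r - d_r) = d_r - 2\eta_r$, so $\mu - 2\nu = \delta - 2\eta$ in $(\Z_k)^\ell$ (here $\mu_r$ is taken modulo $k$, which is legitimate on the right-hand side of \eqref{eq:b_MD-2} as already noted after \eqref{eq:b_MD-1}). Therefore the two membership conditions $\mu - 2\nu \in D^\perp$ and $\delta - 2\eta \in D^\perp$ are identical, and the equivalence follows. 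The only mild subtlety — the one point I would state carefully rather than rush — is the claim that $N(\eta,\delta) \subset (\Gamma_D)^\circ$ reduces to a finitely many congruence conditions, one per $\xi \in D$; this rests on the bilinear pairing between cosets of $N^\ell$ in $(N^\circ)^\ell$ being well-defined modulo $\Z$, which is exactly what Lemma \ref{inner_product_cosets} records.
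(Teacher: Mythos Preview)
Your proof is correct and follows exactly the same approach as the paper: use \eqref{eq:b_MD-2} to rewrite the left-hand condition as $(\xi \mid \mu-2\nu) \equiv 0$, use Lemma~\ref{inner_product_cosets} to rewrite the right-hand condition as $(\xi \mid \delta-2\eta) \equiv 0$, and observe from \eqref{eq:condition_on_eta_delta} that $\mu_r - 2\nu_r = d_r - 2\eta_r$. The paper's proof is the one-line version of yours.
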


\begin{proof}
Since $\mu_r - 2\nu_r = d_r - 2\eta_r$ by \eqref{eq:condition_on_eta_delta}, 
the assertion holds by \eqref{eq:b_MD-2} and 
Lemma \ref{inner_product_cosets}.
\end{proof}

\subsection{Irreducible twisted $M_D$-modules in $V_{(N^\circ)^\ell}$}
\label{subsec:irred_M_D-modules}

Let $X \in \Irr(M_{\0})$. 
Then $X = M_{\mu,\nu}$ for some $\mu$ and $\nu$ by \eqref{eq:Irr_M_0}. 
Take $\eta$ and $\delta$ such that the conditions 
\eqref{eq:condition_on_eta_delta} hold. 
Then $V_{N(\eta, \delta)}$ contains $M_{\mu,\nu}$ as an $M_{\0}$-submodule 
by Proposition \ref{prop:irred_M0-mod_in_coset}. 
Since $M_\xi \subset V_{N(\xi)}$,  
and since $N(\xi) + N(\eta,\delta) = N(\xi+\eta,\delta)$, 
it follows that $V_{N(\xi+\eta,\delta)}$ 
contains $M_\xi \boxtimes_{M_{\0}} M_{\mu,\nu}$.  
For fixed $\eta$ and $\delta$, the cosets $N(\xi+\eta,\delta)$, $\xi \in D$,  
of $N^\ell$ in $(N^\circ)^\ell$ are all distinct. 
Hence the $\chi_{M_{\mu,\nu}}$-twisted $M_D$-module $M_D \cdot M_{\mu,\nu}$ 
generated by $M_{\mu,\nu}$ in $V_{(N^\circ)^\ell}$ is isomorphic to 
$M_D\boxtimes_{M_{\0}} {M_{\mu,\nu}}$ 
by (2) of Theorem \ref{thm:induced-module}.
Furthermore, if $\chi_{M_{\mu,\nu}}(\xi) = 1$ for all $\xi \in D$, then 
$N(\eta, \delta) \subset (\Gamma_D)^\circ$ 
by Proposition \ref{prop:untwisted_coset}, 
so $M_D \cdot M_{\mu,\nu} \subset V_{(\Gamma_D)^\circ}$. 
Therefore, the following theorem holds.

\begin{theorem}\label{thm:contain_univ}
Let $X \in \Irr(M_{\0})$. 

\textup{(1)} 
$V_{(N^\circ)^\ell}$ contains a $\chi_X$-twisted $M_D$-module isomorphic to 
$M_D\boxtimes_{M_{\0}} X$. 

\textup{(2)} 
If $\chi_X = 1$, then $V_{(\Gamma_D)^\circ}$ contains 
an untwisted $M_D$-module isomorphic to 
$M_D\boxtimes_{M_{\0}} X$.
\end{theorem}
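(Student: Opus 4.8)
The plan is to prove Theorem~\ref{thm:contain_univ} by exhibiting an explicit copy of $M_D\boxtimes_{M_{\0}}X$ inside the lattice module $V_{(N^\circ)^\ell}$ and then applying the abstract uniqueness result, Theorem~\ref{thm:induced-module}~(2). First I would fix $X=M_{\mu,\nu}\in\Irr(M_{\0})$ using the classification \eqref{eq:Irr_M_0}, and choose $\eta\in(\Z_k)^\ell$ and $\delta\in\{0,1\}^\ell$ satisfying the compatibility conditions \eqref{eq:condition_on_eta_delta}; such a choice exists by Proposition~\ref{prop:appearance_of_Mij} (and when $k$ is odd one may take $\delta=(0,\dots,0)$), as recorded in Proposition~\ref{prop:irred_M0-mod_in_coset}~(2). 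By Proposition~\ref{prop:irred_M0-mod_in_coset}~(1), the lattice module $V_{N(\eta,\delta)}$ then contains $M_{\mu,\nu}$ as an $M_{\0}$-submodule.

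The key step is to identify the $M_D$-submodule of $V_{(N^\circ)^\ell}$ generated by this copy of $X$. Since $M_\xi\subset V_{N(\xi)}$ for each $\xi\in D$ and the vertex operator on the lattice module respects the additive structure of cosets, $u_{(n)}v\in V_{N(\xi)+N(\eta,\delta)}$ for $u\in M_\xi$ and $v\in V_{N(\eta,\delta)}$; and one checks directly from the definitions that $N(\xi)+N(\eta,\delta)=N(\xi+\eta,\delta)$, so $V_{N(\xi+\eta,\delta)}$ contains $M_\xi\boxtimes_{M_{\0}}M_{\mu,\nu}=M_{\mu,\nu+\xi}$ by \eqref{eq:M_xi_times}. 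The crucial observation is that for fixed $\eta$ and $\delta$ the cosets $N(\xi+\eta,\delta)$, $\xi\in D$, are pairwise distinct cosets of $N^\ell$ in $(N^\circ)^\ell$ (because $\xi\mapsto \xi+\eta$ is injective on $D$ and distinct codewords give distinct cosets by Lemma~\ref{lem:dual_N_3}). Hence $X$ sits as a $V$-submodule of the single graded piece $V_{N(\eta,\delta)}$ of the ambient lattice module, and Theorem~\ref{thm:induced-module}~(2) applies verbatim with the $D$-grading on $M_D\cdot X$ furnished by these cosets: the $M_D$-submodule generated by $X$ is the $D$-graded $\chi_X$-twisted module $M_D\boxtimes_{M_{\0}}X$. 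This proves part~(1).

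For part~(2), suppose $\chi_X=1$, i.e.\ $b_{M_\0}(M_\xi,M_{\mu,\nu})=0$ for all $\xi\in D$. By Proposition~\ref{prop:untwisted_coset}, this is equivalent to $N(\eta,\delta)\subset(\Gamma_D)^\circ$; and once one coset lies in the dual lattice, all the translates $N(\xi+\eta,\delta)$ with $\xi\in D$ do too, since $N(\xi)\subset\Gamma_D$ and $\Gamma_D\subset(\Gamma_D)^\circ$ is self-orthogonal in the relevant sense. Therefore the $M_D$-module $M_D\cdot X$ constructed above lies entirely inside $V_{(\Gamma_D)^\circ}=\bigoplus_{\xi\in D}V_{N(\xi+\eta,\delta)}$, and it is an untwisted module because $\chi_X$ is the principal character, so the $\chi_X$-twisted module structure is genuinely untwisted.

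The main obstacle, and the place that requires care rather than routine bookkeeping, is verifying that the $D$-grading $M_D\cdot X=\bigoplus_{\xi\in D}(M_\xi\boxtimes_{M_\0}M_{\mu,\nu})$ read off from the lattice cosets actually satisfies the defining condition $0\neq M_\xi\cdot M^\beta\subset M^{\alpha+\beta}$ of a $D$-graded module: the inclusion is immediate from coset additivity, but non-vanishing of the action needs the simple-current property of the $M_\xi$ together with the fact that each $V_{N(\xi+\eta,\delta)}$ genuinely contains a nonzero copy of $M_{\mu,\nu+\xi}$, which is where Proposition~\ref{prop:irred_M0-mod_in_coset}~(1) is used for every shift $\xi$, not just $\xi=\0$. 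Once that is in hand, the remaining steps are direct consequences of the cited theorems and the explicit coset arithmetic already established in Sections~\ref{sec:coset-N-in-Ncirc} and~\ref{sec:irred-mod-in-coset}.
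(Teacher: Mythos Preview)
Your proof is correct and follows essentially the same route as the paper: write $X=M_{\mu,\nu}$, choose $\eta,\delta$ via Proposition~\ref{prop:irred_M0-mod_in_coset}, use coset additivity $N(\xi)+N(\eta,\delta)=N(\xi+\eta,\delta)$ together with the distinctness of these cosets to invoke Theorem~\ref{thm:induced-module}~(2), and then apply Proposition~\ref{prop:untwisted_coset} for part~(2). One small slip: the displayed equality $V_{(\Gamma_D)^\circ}=\bigoplus_{\xi\in D}V_{N(\xi+\eta,\delta)}$ is false as written---the right-hand side is only a submodule of $V_{(\Gamma_D)^\circ}$---but the inclusion is all you need, and the argument is unaffected.
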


Let $W$ be an irreducible $\chi$-twisted $M_D$-module for  $\chi \in D^*$,  
and let $X$ be an irreducible $M_\0$-submodule of $W$. 
Then $W$ is isomorphic to a direct summand of
$M_D\boxtimes_{M_{\0}} X$ 
with $\chi = \chi_X$ by (3) of Theorem \ref{thm:induced-module}. 
Thus Theorem \ref{thm:contain_univ} implies the following theorem.

\begin{theorem}\label{thm:contain_irred}
\textup{(1)} 
$V_{(N^\circ)^\ell}$ contains any irreducible $\chi$-twisted $M_D$-module 
for $\chi \in D^*$.

\textup{(2)} 
$V_{(\Gamma_D)^\circ}$ contains any irreducible untwisted $M_D$-module.
\end{theorem}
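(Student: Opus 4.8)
The plan is to deduce Theorem~\ref{thm:contain_irred} immediately from Theorem~\ref{thm:contain_univ} and Theorem~\ref{thm:induced-module}, exactly along the lines of the paragraph preceding the statement.

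First I would fix an irreducible $\chi$-twisted $M_D$-module $W$ for some $\chi\in D^*$ (with $\chi=1$ in the situation of part~(2)). Since every element of $D^*$ acts as the identity on the subalgebra $M_\0\subset M_D$, restricting $W$ to $M_\0$ gives an (untwisted) admissible $M_\0$-module, and by rationality of $M_\0$ (Proposition~\ref{prop:properties_M_D}) it decomposes into a direct sum of irreducible $M_\0$-modules; in particular $W$ contains some $X\in\Irr(M_\0)$ as an $M_\0$-submodule. Because $W$ is irreducible over $M_D$, we have $W=M_D\cdot X$. Now part~(3) of Theorem~\ref{thm:induced-module}, applied with $\sigma=\chi$, yields $\chi=\chi_X$ together with a surjective $M_D$-homomorphism from $M_D\boxtimes_{M_\0}X$ onto $M_D\cdot X=W$; as noted after that theorem, this forces $W=M_D\cdot X$ to be isomorphic to a direct summand of $M_D\boxtimes_{M_\0}X$ (alternatively, $M_D$ is rational by Theorem~\ref{thm:MD}, so $M_D\boxtimes_{M_\0}X$ is completely reducible and a surjection onto $W$ splits). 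In the setting of part~(2), $\chi=1$ forces $\chi_X=1$.

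Finally I would invoke Theorem~\ref{thm:contain_univ}: in case~(1) it produces a $\chi_X$-twisted $M_D$-submodule of $V_{(N^\circ)^\ell}$ isomorphic to $M_D\boxtimes_{M_\0}X$, and in case~(2), since $\chi_X=1$, an untwisted $M_D$-submodule of $V_{(\Gamma_D)^\circ}$ isomorphic to $M_D\boxtimes_{M_\0}X$. Inside this submodule the direct summand isomorphic to $W$ furnishes the desired realization of $W$, completing the proof. There is essentially no hard step left, since the real content was already extracted in Theorem~\ref{thm:contain_univ} through the explicit coset decompositions of Sections~\ref{sec:dec-V_Nja}–\ref{sec:irred-mod-in-coset}; the only points needing (routine) care are checking that the restriction of a $\chi$-twisted $M_D$-module to $M_\0$ is an admissible $M_\0$-module so that complete reducibility applies, and keeping the twisted-module conventions of Section~\ref{subsec:irred_VD-modules} consistent throughout.
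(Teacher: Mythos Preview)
Your proposal is correct and follows essentially the same approach as the paper: pick an irreducible $M_\0$-submodule $X\subset W$, use part~(3) of Theorem~\ref{thm:induced-module} to identify $\chi=\chi_X$ and realize $W$ as a direct summand of $M_D\boxtimes_{M_\0}X$, and then invoke Theorem~\ref{thm:contain_univ}. The only difference is that you spell out the routine justifications (rationality of $M_\0$, complete reducibility for the splitting) that the paper leaves implicit.
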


Let $\Irr(M_\0) = \bigcup_{i\in I} \orbit_i$ 
be the $D$-orbit decomposition of $\Irr(M_\0)$ for the action of $D$ on 
$\Irr(M_\0)$ in \eqref{eq:M_xi_times},  
and let 
$D_{M_{\mu,\nu}} 
= \{ \xi \in D \mid M_\xi \boxtimes_{M_{\0}} M_{\mu,\nu} \cong M_{\mu,\nu} \}$
be the stabilizer of $M_{\mu,\nu}$.
Lemma \ref{eq:exceptional-id} implies the following lemma.

\begin{lemma}\label{lem:exceptional_id_M0}
$M_\xi \boxtimes_{M_{\0}} M_{\mu,\nu} \cong M_{\mu,\nu}$  
as $M_{\0}$-modules for some $\xi \ne \0$ if and only if 
$k$ is even, $\xi = (\xi_1,\ldots,\xi_\ell) \in \{0, k/2\}^\ell$, and 
$\mu_r = k/2$ for $1 \le r \le \ell$ such that $\xi_r = k/2$. 
\end{lemma}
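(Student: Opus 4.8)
The plan is to reduce the statement to the single-tensor-factor case, where the answer is already recorded in Lemma \ref{eq:exceptional-id}. First I would note that the fusion product respects the tensor decomposition: by \eqref{eq:M_xi_times} we have $M_\xi \boxtimes_{M_\0} M_{\mu,\nu} = M_{\mu,\nu+\xi} = M^{\mu_1,\nu_1+\xi_1}\otimes\cdots\otimes M^{\mu_\ell,\nu_\ell+\xi_\ell}$, and since the $M^{i,j}$ are finitely many inequivalent irreducible $M^0$-modules with one-dimensional top levels, an isomorphism of tensor products of irreducibles over $M^0$ forces a factorwise isomorphism. Hence $M_\xi \boxtimes_{M_\0} M_{\mu,\nu} \cong M_{\mu,\nu}$ if and only if $M^{\xi_r}\boxtimes_{M^0} M^{\mu_r,\nu_r}\cong M^{\mu_r,\nu_r}$ for every $r$, $1\le r\le\ell$.

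Next I would invoke Lemma \ref{eq:exceptional-id} in each factor: $M^{\xi_r}\boxtimes_{M^0} M^{\mu_r,\nu_r}\cong M^{\mu_r,\nu_r}$ holds if and only if either $\xi_r = 0$, or $k$ is even and $\xi_r = \mu_r = k/2$. Assembling these $\ell$ conditions, the joint condition for $\xi\ne\0$ is exactly: $k$ is even (needed as soon as any $\xi_r\ne 0$), every component $\xi_r$ lies in $\{0,k/2\}$, and whenever $\xi_r = k/2$ one also has $\mu_r = k/2$. This is precisely the stated characterization, so the lemma follows.

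The one point requiring a small argument is the factorwise-isomorphism claim for tensor products of irreducible modules over the tensor product VOA $M_\0 = (M^0)^{\otimes\ell}$. I expect this to be the main (though mild) obstacle: it is standard — it follows, for instance, from the classification of irreducible modules over a tensor product of rational $C_2$-cofinite VOAs as tensor products of irreducibles (as in \cite{FHL1993}, and already used implicitly to write \eqref{eq:Irr_M_0}) — but one should be careful to cite it rather than leave it tacit. Given that classification, the proof is essentially a two-line deduction, and no further computation is needed.
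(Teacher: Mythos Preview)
Your proposal is correct and follows the same approach as the paper, which simply records that the lemma is implied by Lemma~\ref{eq:exceptional-id}. Your elaboration of the factorwise reduction (via \eqref{eq:M_xi_times} and the classification \eqref{eq:Irr_M_0} of irreducible $M_\0$-modules as tensor products) makes explicit exactly the step the paper leaves tacit.
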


The next theorem is a restatement of Proposition \ref{prop:stable-module}.

\begin{theorem}\label{thm:stable-module_M0}
  Let $X \in \Irr(M_\0)$. 
  If $D_X = 0$, then
  $M_D \boxtimes_{M_{\0}} X$ is an irreducible 
  $\chi_X$-twisted $M_D$-module.
\end{theorem}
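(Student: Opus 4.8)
The plan is to observe that Theorem \ref{thm:stable-module_M0} is nothing more than Proposition \ref{prop:stable-module} transported to the present situation, so the only work is to verify that the hypotheses of that proposition are met and that its conclusion says what is claimed. First I would recall that $M_D = \bigoplus_{\xi\in D} M_\xi$ is a $D$-graded simple current extension of $M_\0 = (M^0)^{\otimes\ell}$ (this is exactly \eqref{eq:M_D} together with the discussion preceding it), and that $M_\0$ is a simple, self-dual, rational, $C_2$-cofinite vertex operator algebra of CFT-type by Proposition \ref{prop:properties_M_D}. Thus the standing assumptions of Section \ref{subsec:irred_VD-modules} hold with $V = M_\0$ and $V_D = M_D$, and in particular the action of $D$ on $\Irr(M_\0)$ by $M_\xi\boxtimes_{M_\0}(-)$ in \eqref{eq:M_xi_times}, the linear characters $\chi_X$, and the stabilizers $D_X$ are all defined as in the general theory.

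Next I would simply invoke Theorem \ref{thm:induced-module}(1): for $X\in\Irr(M_\0)$ there is a unique structure of a $D$-graded $\chi_X$-twisted $M_D$-module on $M_D\boxtimes_{M_\0} X = \bigoplus_{\xi\in D} M_\xi\boxtimes_{M_\0} X$ extending the $M_\0$-module structure. Then I would apply Proposition \ref{prop:stable-module} verbatim: under the additional hypothesis $D_X = 0$, that proposition asserts precisely that $M_D\boxtimes_{M_\0} X$ is an irreducible $\chi_X$-twisted $M_D$-module. Since $D_X = \{\xi\in D \mid M_\xi\boxtimes_{M_\0} X\cong X\}$ is exactly the stabilizer appearing in Proposition \ref{prop:stable-module}, there is nothing further to check.

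I do not anticipate a genuine obstacle here: the statement is a direct specialization, and the main ``step'' is bookkeeping, namely confirming that the general hypotheses (simplicity, self-duality, rationality, $C_2$-cofiniteness, CFT-type of $V=M_\0$; the $D$-graded simple current extension structure on $V_D=M_D$; $h(M_\xi)\in\Z$, which holds because $D$ is in Case A so $h(M_\xi) = \sum_r \xi_r - (\xi|\xi)/k = \sum_r\xi_r\in\Z$) are all in force. If anything merits a sentence of care, it is the integrality $h(M_\xi)\in\Z$ for $\xi\in D$, which guarantees that $M_D$ is an honest vertex operator algebra (not a superalgebra) and hence that the untwisted/twisted representation theory of Section \ref{subsec:irred_VD-modules} applies; this is recorded in Theorem \ref{thm:MD}(1). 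With that in hand the proof is a one-line citation.

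\begin{proof}
Since $D$ is in Case A, $h(M_\xi) = \big(\sum_{r=1}^\ell \xi_r\big) - (\xi|\xi)/k \in \Z$ for all $\xi \in D$, so by Theorem \ref{thm:MD} \textup{(1)}, $M_D = \bigoplus_{\xi\in D} M_\xi$ is a $D$-graded simple current extension of $M_\0$, and $M_\0$ is a simple, self-dual, rational, $C_2$-cofinite vertex operator algebra of CFT-type by Proposition \ref{prop:properties_M_D}. Hence the hypotheses of Section \ref{subsec:irred_VD-modules} are satisfied with $V = M_\0$ and $V_D = M_D$. By Theorem \ref{thm:induced-module} \textup{(1)}, the space $M_D\boxtimes_{M_\0} X$ carries a $D$-graded $\chi_X$-twisted $M_D$-module structure. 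Since $D_X = 0$ by assumption, Proposition \ref{prop:stable-module} applies and yields that $M_D\boxtimes_{M_\0} X$ is an irreducible $\chi_X$-twisted $M_D$-module.
\end{proof}
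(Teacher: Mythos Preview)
Your proof is correct and matches the paper's own treatment: the paper simply remarks that this theorem is a restatement of Proposition \ref{prop:stable-module} and gives no further proof, and your argument does exactly that after checking the standing hypotheses hold for $V = M_\0$ and $V_D = M_D$.
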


Now, suppose $D_X \ne 0$. 
Then $k$ is even and $D_X \subset \{0, k/2\}^\ell$ 
by Lemma \ref{lem:exceptional_id_M0}. 
In order to apply the results in Section \ref{subsec:irred_VD-modules}, 
we recall the previous arguments for a special case where 
the $\Z_k$-code is of length one consisting of two codewords $(0)$ and $(k/2)$. 
Let $C = \{ (0), (k/2) \}$ be such a $\Z_k$-code. 
Then $\Gamma_C = N \cup N^{(k/2)}$ with $N^{(k/2)} = N + k \lambda_k$, 
and $M_C = M^0 \oplus M^{k/2}$ is a $\Z_2$-graded simple current extension 
of $M^0$ by the self-dual simple current $M^0$-module $M^{k/2}$ 
with $h(M^{k/2}) = k/4$. 

If $k \equiv 0 \pmod{4}$, 
then $(k/2)^2 \equiv 0 \pmod{k}$. 
Hence the $\Z_k$-code $C$ is in Case A in Section \ref{sec:gamma_D-M_D},  
and $M_C$ is a simple vertex operator algebra with 
$h(M^{k/2}) \in \Z$. 
If $k \equiv 2 \pmod{4}$, 
then $(k/2)^2 \equiv k/2 \pmod{k}$. 
Hence $C$ is in Case B in Section \ref{sec:gamma_D-M_D}, 
and $M_C$ is a simple vertex operator superalgebra 
with $h(M^{k/2}) \in \Z + 1/2$. 
In both cases, there exists a unique structure of a $\Z_2$-graded 
either untwisted or $\Z_2$-twisted $M_C$-module on the space 
$P^0 \oplus P^1$ with $P^0 = P$ and $P^1 = M^{k/2} \boxtimes_{M^0} P$ 
for any irreducible $M^0$-module $P$. 

Under the correspondence $0 \mapsto 0$ and $k/2 \mapsto 1$, 
we can regard any additive subgroup of $\{ 0, k/2\}^\ell \subset (\Z_k)^\ell$ 
as an additive subgroup of $(\Z_2)^\ell$. 
In the case where $k \equiv 2 \pmod{4}$, this correspondence is 
the reduction modulo $2$, and it in fact gives an isometry from 
$(\{ 0, k/2\}^\ell, (\, \cdot\, | \, \cdot \,))$ to 
$((\Z_2)^\ell, (\, \cdot\, | \, \cdot \,))$, where $(\, \cdot\, | \, \cdot \,)$ is 
the standard inner product on either $(\Z_k)^\ell$ or $(\Z_2)^\ell$. 
Hence $D_X \cap D_X^\perp$ in $(\Z_k)^\ell$ corresponds to 
$D_X \cap D_X^\perp$ in $(\Z_2)^\ell$. 
Therefore, we obtain the following theorem by  
Propositions \ref{prop:decomp1} and \ref{prop:decomp2}.

\begin{theorem}\label{thm:decomp_M_D_X}
Let $X \in \Irr(M_\0)$. 
Suppose $k$ is even and $D_X \ne 0$.

\textup{(1)} If $k \equiv 0 \pmod{4}$, then
the irreducible decomposition of the $\chi_X$-twisted $M_D$-module 
$M_D \boxtimes_{M_\0} X$ is given as 
\[
  M_D \boxtimes_{M_\0} X = \bigoplus_{j = 1}^{\abs{D_X}} U^j,
\]
where $U^j$, $1 \leq j \leq \abs{D_X}$, are inequivalent irreducible 
$\chi_X$-twisted $M_D$-modules. 
Furthermore, 
$U^j \cong \bigoplus_{W \in \orbit_i} W$ 
as $M_\0$-modules, where $\orbit_i$ is the $D$-orbit in $\Irr(M_\0)$ 
containing $X$.

\textup{(2)} If $k \equiv 2 \pmod{4}$, then  
the irreducible decomposition of the $\chi_X$-twisted $M_D$-module 
$M_D \boxtimes_{M_\0} X$ is given as 
\[
  M_D \boxtimes_{M_\0} X 
  = \bigoplus_{j=1}^{\abs{D_X \cap D_X^\perp}} (U^j)^{\oplus m}, 
\]
where $m = [D_X : D_X \cap D_X^\perp]^{1/2}$, and 
$U^j$, $1 \leq j \leq \abs{D_X \cap D_X^\perp}$, are inequivalent irreducible 
$\chi_X$-twisted $M_D$-modules. 
Furthermore, 
$U^j \cong \bigoplus_{W \in \orbit_i} W^{\oplus m}$ 
as $M_\0$-modules, where $\orbit_i$ is the $D$-orbit in $\Irr(M_\0)$ 
containing $X$.
\end{theorem}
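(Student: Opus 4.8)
The plan is to reduce Theorem~\ref{thm:decomp_M_D_X} directly to the general machinery of Section~\ref{subsec:irred_M_D-modules}, specifically to Propositions~\ref{prop:decomp1} and~\ref{prop:decomp2}, by verifying that the relevant hypotheses hold in the present situation. First I would note that by Lemma~\ref{lem:exceptional_id_M0} the assumption $D_X \ne 0$ forces $k$ to be even and $D_X \subseteq \{0,k/2\}^\ell$; under the correspondence $0 \mapsto 0$, $k/2 \mapsto 1$ this realizes $D_X$ as a binary code of length $\ell$. So I would set $M^0 = K(\mfsl_2,k)$ and $M^1 = M^{k/2}$, and check that the pair $(M^0, M^1)$ satisfies Hypothesis~\ref{hypo:binary}: parts (1) and (2) come from the listed properties of $K(\mfsl_2,k)$ in Section~\ref{sec:Ksl2k} (simplicity, self-duality, rationality, $C_2$-cofiniteness, CFT-type, and $h(M^{k/2}) = k/4$, which lies in $\Z$ when $k \equiv 0 \pmod 4$ and in $\Z+\tfrac12$ when $k \equiv 2 \pmod 4$); part (3) is exactly the existence/uniqueness statement for the $\Z_2$-graded (twisted or untwisted) $M_C$-module structure on $P^0 \oplus P^1$ recalled just before the theorem, applied with $C = \{(0),(k/2)\}$; part (4) holds since $V = M_\0 = (M^0)^{\otimes \ell}$; and part (5) holds because $V^\xi = M_\xi = M^{\xi_1} \otimes \cdots \otimes M^{\xi_\ell}$ with $\xi \in D_X \subseteq \{0,k/2\}^\ell$, so each tensor factor is $M^0$ or $M^1$.

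Once Hypothesis~\ref{hypo:binary} is in place, the two cases split along the parity of $k/2$. If $k \equiv 0 \pmod 4$, then $h(M^{k/2}) = k/4 \in \Z$, so $M_C = M^0 \oplus M^{k/2}$ is a simple vertex operator algebra with $h(M^1) \in \Z$; this is precisely the hypothesis of Proposition~\ref{prop:decomp1}, which applied to $X \in \Irr(M_\0)$, with $V_D = M_D$ and $V = M_\0$, yields the decomposition $M_D \boxtimes_{M_\0} X = \bigoplus_{j=1}^{\abs{D_X}} U^j$ into $\abs{D_X}$ inequivalent irreducible $\chi_X$-twisted $M_D$-modules, each isomorphic as an $M_\0$-module to $\bigoplus_{W \in \orbit_i} W$. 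If $k \equiv 2 \pmod 4$, then $h(M^{k/2}) = k/4 \in \Z + \tfrac12$, so $M_C$ is a simple vertex operator superalgebra with $h(M^1) \in \Z + \tfrac12$, which is the hypothesis of Proposition~\ref{prop:decomp2}; applying it gives $M_D \boxtimes_{M_\0} X = \bigoplus_{j=1}^{\abs{D_X \cap D_X^\perp}} (U^j)^{\oplus m}$ with $m = [D_X : D_X \cap D_X^\perp]^{1/2}$ and $U^j \cong \bigoplus_{W \in \orbit_i} W^{\oplus m}$ as $M_\0$-modules.

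The one genuinely substantive point — and what I expect to be the main obstacle — is that Proposition~\ref{prop:decomp2} is formulated with $D_X$ a binary code inside $(\Z_2)^\ell$ equipped with the standard inner product, whereas here $D_X$ naturally sits inside $(\Z_k)^\ell$ with \emph{its} standard inner product, and the description of the $2$-cocycle $\epsilon$ via \eqref{eq:2-cocycle} involves $(\alpha|\beta)$ and $\wt(\alpha)$. So I would verify explicitly that the correspondence $0 \mapsto 0$, $k/2 \mapsto 1$ is an isometry from $(\{0,k/2\}^\ell, (\cdot|\cdot))$ to $((\Z_2)^\ell,(\cdot|\cdot))$ when $k \equiv 2 \pmod 4$: indeed, $(k/2)(k/2) = k^2/4 \equiv k/2 \pmod k$ in this case, so a coordinate-wise product of two copies of $k/2$ reduces to $1$ modulo $2$, matching the $\Z_2$ inner product; since the map is the reduction modulo $2$, it carries $D_X \cap D_X^\perp$ computed in $(\Z_k)^\ell$ to $D_X \cap D_X^\perp$ computed in $(\Z_2)^\ell$, and likewise preserves Hamming weights. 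This is exactly the isometry statement made in the paragraph preceding the theorem, so it only needs to be invoked. With that identification, the quantities $\abs{D_X \cap D_X^\perp}$ and $[D_X : D_X \cap D_X^\perp]^{1/2}$ appearing in Proposition~\ref{prop:decomp2} match those in the statement, and the proof is complete. A final sanity check: in case (1), the argument of Proposition~\ref{prop:decomp1} only uses that $\epsilon$ is a coboundary, which holds because $M_C$ is an honest vertex operator algebra (not a superalgebra), so no inner-product bookkeeping is needed there.

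\begin{proof}
By Lemma~\ref{lem:exceptional_id_M0}, the hypothesis $D_X \ne 0$ implies that $k$ is even and $D_X \subseteq \{0,k/2\}^\ell$. Under the correspondence $0 \mapsto 0$, $k/2 \mapsto 1$ we regard $D_X$ as a binary code of length $\ell$. Put $M^0 = K(\mfsl_2,k)$ and $M^1 = M^{k/2}$. We check Hypothesis~\ref{hypo:binary} for $V_{D_X}$. Parts (1) and (2) follow from the properties of $K(\mfsl_2,k)$ listed in Section~\ref{sec:Ksl2k} together with $h(M^{k/2}) = k/4$, which lies in $\Z$ if $k \equiv 0 \pmod 4$ and in $\Z + 1/2$ if $k \equiv 2 \pmod 4$; in the respective cases $M_C = M^0 \oplus M^{k/2}$ is a simple vertex operator algebra or a simple vertex operator superalgebra, as recalled before the theorem. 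Part (3) is precisely the existence and uniqueness of the $\Z_2$-graded untwisted or $\Z_2$-twisted $M_C$-module structure on $P^0 \oplus P^1$ for any irreducible $M^0$-module $P$. Part (4) holds since $V = M_\0 = (M^0)^{\otimes \ell}$, and part (5) holds because $V^\xi = M_\xi = M^{\xi_1} \otimes \cdots \otimes M^{\xi_\ell}$ with each $\xi_r \in \{0, k/2\}$, so each tensor factor is $M^0$ or $M^1$.

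Assume first that $k \equiv 0 \pmod 4$. Then $M_C = M^0 \oplus M^{k/2}$ is a simple vertex operator algebra with $h(M^1) = k/4 \in \Z$, so Proposition~\ref{prop:decomp1}, applied with $V = M_\0$ and $V_D = M_D$, yields
\[
  M_D \boxtimes_{M_\0} X = \bigoplus_{j=1}^{\abs{D_X}} U^j,
\]
where the $U^j$ are inequivalent irreducible $\chi_X$-twisted $M_D$-modules with $U^j \cong \bigoplus_{W \in \orbit_i} W$ as $M_\0$-modules, $\orbit_i$ being the $D$-orbit in $\Irr(M_\0)$ containing $X$. This proves (1).

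Now assume $k \equiv 2 \pmod 4$. Then $h(M^1) = k/4 \in \Z + 1/2$, so $M_C$ is a simple vertex operator superalgebra with $h(M^1) \in \Z + 1/2$, and Proposition~\ref{prop:decomp2} applies. Moreover, in this case $(k/2)(k/2) = k^2/4 \equiv k/2 \pmod k$, so reduction modulo $2$ is an isometry from $(\{0,k/2\}^\ell, (\,\cdot\,|\,\cdot\,))$ onto $((\Z_2)^\ell, (\,\cdot\,|\,\cdot\,))$, and hence $D_X \cap D_X^\perp$ computed in $(\Z_k)^\ell$ corresponds to $D_X \cap D_X^\perp$ computed in $(\Z_2)^\ell$, with Hamming weights preserved. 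Therefore Proposition~\ref{prop:decomp2} gives
\[
  M_D \boxtimes_{M_\0} X = \bigoplus_{j=1}^{\abs{D_X \cap D_X^\perp}} (U^j)^{\oplus m},
\]
where $m = [D_X : D_X \cap D_X^\perp]^{1/2}$ and the $U^j$ are inequivalent irreducible $\chi_X$-twisted $M_D$-modules with $U^j \cong \bigoplus_{W \in \orbit_i} W^{\oplus m}$ as $M_\0$-modules. This proves (2).
\end{proof}
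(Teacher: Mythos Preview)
Your proposal is correct and follows essentially the same route as the paper: invoke Lemma~\ref{lem:exceptional_id_M0} to place $D_X$ inside $\{0,k/2\}^\ell$, verify Hypothesis~\ref{hypo:binary} using the length-one code $C=\{(0),(k/2)\}$ and the properties of $M^{k/2}$ (in particular $h(M^{k/2})=k/4$), then apply Proposition~\ref{prop:decomp1} when $k\equiv 0\pmod 4$ and Proposition~\ref{prop:decomp2} when $k\equiv 2\pmod 4$, after checking in the latter case that reduction modulo $2$ gives an isometry $(\{0,k/2\}^\ell,(\cdot|\cdot))\to((\Z_2)^\ell,(\cdot|\cdot))$ so that $D_X\cap D_X^\perp$ is computed consistently. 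Your write-up is slightly more explicit than the paper's in spelling out the verification of each clause of Hypothesis~\ref{hypo:binary}, but the argument is the same.
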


Since any irreducible $\chi$-twisted $M_D$-module for $\chi \in D^*$  
is isomorphic to a direct summand 
of the $\chi_X$-twisted $M_D$-module 
$M_D\boxtimes_{M_{\0}} X$ with $\chi = \chi_X$ for some $X \in \Irr(M_\0)$, 
we obtain a classification of all the irreducible $\chi$-twisted $M_D$-modules 
for any $\chi \in D^*$ by 
Theorems \ref{thm:stable-module_M0} and \ref{thm:decomp_M_D_X}. 

As mentioned in Section \ref{subsec:irred_VD-modules}, 
we can write $\chi_i$ for $\chi_X$, and $D_i$ for $D_X$ 
if $X$ belongs to a $D$-orbit $\orbit_i$ in $\Irr(M_\0)$.
Let $I(\chi) = \{ i \in I \mid \chi_i = \chi \}$. 
Then $I = \bigcup_{\chi \in D^*} I(\chi)$. 
The next lemma follows from (2) of Lemma \ref{lem:linear_char_of_D}. 

\begin{lemma}
  $I(\chi) \ne \varnothing$ for any $\chi \in D^*$.
\end{lemma}

Theorems \ref{thm:stable-module_M0} and \ref{thm:decomp_M_D_X} imply 
the next theorem.

\begin{theorem}\label{thm:count_irred_twisted_mod}
The number of inequivalent irreducible 
$\chi$-twisted $M_D$-modules for $\chi \in D^*$ is given as follows.
\begin{alignat*}{2}
& \abs{I(\chi)} & \quad 
& \text{if $k$ is odd},\\
& \abs{I(\chi)_0} + \sum_{i \in I(\chi)_1} \abs{D_i} & \quad 
& \text{if } k \equiv 0 \pmod{4},\\
& \abs{I(\chi)_0} + \sum_{i \in I(\chi)_1} \abs{D_i \cap D_i^\perp} & \quad 
& \text{if } k \equiv 2 \pmod{4}, 
\end{alignat*}
where $I(\chi)_0 = \{ i \in I(\chi) \mid D_i = 0 \}$ and 
$I(\chi)_1 = I(\chi) \setminus I(\chi)_0$. 
\end{theorem}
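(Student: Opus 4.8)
The plan is to fix $\chi\in D^{*}$ and organize the irreducible $\chi$-twisted $M_{D}$-modules according to which $D$-orbit in $\Irr(M_{\0})$ they meet upon restriction. First I would invoke Theorem~\ref{thm:induced-module}: any irreducible $\chi$-twisted $M_{D}$-module $W$ contains an irreducible $M_{\0}$-submodule $X$, and then $W$ is isomorphic to a direct summand of $M_{D}\boxtimes_{M_{\0}}X$ with $\chi=\chi_{X}$. Since $\chi_{X}$ and the stabilizer $D_{X}$ depend only on the $D$-orbit $\orbit_{i}\ni X$, this forces $i\in I(\chi)$. Conversely, for each $i\in I(\chi)$ and any fixed representative $X_{i}\in\orbit_{i}$, every irreducible summand of $M_{D}\boxtimes_{M_{\0}}X_{i}$ is a $\chi$-twisted $M_{D}$-module by Theorem~\ref{thm:induced-module}\,(1), and the isomorphism class of $M_{D}\boxtimes_{M_{\0}}X_{i}$ depends only on $\orbit_{i}$, not on the choice of $X_{i}$, as noted after that theorem. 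Hence the set of irreducible $\chi$-twisted $M_{D}$-modules is exactly the union, over $i\in I(\chi)$, of the sets of irreducible summands of $M_{D}\boxtimes_{M_{\0}}X_{i}$.

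Next I would count these summands orbit by orbit. If $D_{i}=0$, then $M_{D}\boxtimes_{M_{\0}}X_{i}$ is already irreducible by Theorem~\ref{thm:stable-module_M0}, contributing one module. If $D_{i}\neq 0$, then $k$ is even by Lemma~\ref{lem:exceptional_id_M0}, and Theorem~\ref{thm:decomp_M_D_X} gives $\abs{D_{i}}$ inequivalent irreducible summands when $k\equiv 0\pmod 4$, and $\abs{D_{i}\cap D_{i}^{\perp}}$ inequivalent irreducible summands when $k\equiv 2\pmod 4$. When $k$ is odd, Lemma~\ref{lem:exceptional_id_M0} forces $D_{i}=0$ for every $i$, so each orbit contributes exactly one module; this accounts for the first line of the claimed table.

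The one point that requires care — and the main obstacle — is to rule out overcounting across orbits: I must check that an irreducible summand coming from $M_{D}\boxtimes_{M_{\0}}X_{i}$ and one coming from $M_{D}\boxtimes_{M_{\0}}X_{i'}$ are inequivalent whenever $i\neq i'$ with $i,i'\in I(\chi)$. For this I would use the explicit $M_{\0}$-module structure recorded in Theorems~\ref{thm:stable-module_M0} and~\ref{thm:decomp_M_D_X}: every irreducible summand of $M_{D}\boxtimes_{M_{\0}}X_{i}$ restricts, as an $M_{\0}$-module, to a direct sum of copies of modules lying in $\orbit_{i}$ only. Since distinct $D$-orbits in $\Irr(M_{\0})$ are disjoint, no irreducible $M_{\0}$-module in $\orbit_{i}$ is isomorphic to one in $\orbit_{i'}$; hence the two summands cannot even be isomorphic as $M_{\0}$-modules, \emph{a fortiori} not as $M_{D}$-modules. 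Inequivalence within a single orbit is already asserted in those theorems. Summing the orbit contributions over $i\in I(\chi)$, with $I(\chi)=I(\chi)_{0}\cup I(\chi)_{1}$ split according to whether $D_{i}$ vanishes, then yields $\abs{I(\chi)}$ for $k$ odd, $\abs{I(\chi)_{0}}+\sum_{i\in I(\chi)_{1}}\abs{D_{i}}$ for $k\equiv 0\pmod 4$, and $\abs{I(\chi)_{0}}+\sum_{i\in I(\chi)_{1}}\abs{D_{i}\cap D_{i}^{\perp}}$ for $k\equiv 2\pmod 4$, which is the assertion.
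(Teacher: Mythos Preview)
Your proposal is correct and follows exactly the route the paper intends: the paper's proof is the single sentence ``Theorems~\ref{thm:stable-module_M0} and~\ref{thm:decomp_M_D_X} imply the next theorem,'' and you have simply unpacked this, including the across-orbit inequivalence check via the $M_{\0}$-restriction described in the ``Furthermore'' clauses of those theorems.
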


\section{Irreducible $M_D$-modules: Case B}\label{sec:irred_M_D-modules_B}

Let $k \ge 2$, and let $D$ be a $\Z_k$-code of length $\ell$ 
satisfying the conditions of Case B in Section \ref{sec:gamma_D-M_D}, 
that is, 
$k$ is even, 
$(\xi | \eta) \in \{0, k/2\}$ for all $\xi, \eta \in D$, and 
$(\xi | \xi) = k/2$ for some $\xi \in D$. 
Let $D^0$ and $D^1$ be as in Section \ref{sec:gamma_D-M_D}. 
In this section, 
we construct all irreducible $M_D$-modules 
inside $V_{(\Gamma_{D^0})^\circ}$. 

Since $D^0$ is a $\Z_k$-code of length $\ell$ in Case A, we can apply the results 
in Section \ref{sec:irred_M_D-modules_A} to the vertex operator algebra $M_{D^0}$. 
Let $P \in \Irr(M_{D^0})$. 
Then $P$ is isomorphic to a direct summand of $M_{D^0} \boxtimes_{M_\0} M_{\mu,\nu}$ 
for some $M_{\mu,\nu} \in \Irr(M_\0)$. 
Moreover, there are $\eta \in (\Z_k)^\ell$ and $\delta \in \{0,1\}^\ell$ such that 
$N(\eta, \delta) \subset (\Gamma_{D^0})^\circ$ 
and $V_{N(\eta, \delta)}$ contains $M_{\mu,\nu}$ as an $M_{\0}$-submodule. 

For simplicity of notation, we identify $P$ with an irreducible direct summand of 
$M_{D^0} \boxtimes_{M_\0} M_{\mu,\nu}$ isomorphic to $P$. 
Then $P$ is a submodule of the $M_{D^0}$-module $M_{D^0} \boxtimes_{M_\0} M_{\mu,\nu}$, 
and the $M_D$-module $M_D \cdot P$ generated by $P$ is isomorphic to 
$M_D \boxtimes_{M_{D^0}} P$. 
Thus $M_D \cdot P = P \oplus Q$ as $M_{D^0}$-modules, 
where $Q$ is an irreducible $M_{D^0}$-module isomorphic to $M_{D^1} \boxtimes_{M_{D^0}} P$. 
Since $\Gamma_D \subset (\Gamma_D)^\circ \subset (\Gamma_{D^0})^\circ$, 
and since $M_{\mu,\nu} \subset V_{(\Gamma_{D^0})^\circ}$, 
we have $M_D \cdot P \subset V_{(\Gamma_{D^0})^\circ}$. 

If $P$ and $Q$ are inequivalent as $M_{D^0}$-modules, 
then there is a unique $M_D$-module structure on $P \oplus Q$ which extends the 
$M_{D^0}$-module structure. 
If $P$ and $Q$ are equivalent as $M_{D^0}$-modules, then $P \oplus Q$ is the direct sum of 
two inequivalent irreducible $M_D$-modules, both of which are isomorphic to $P$ as 
$M_{D^0}$-modules, see \cite[Proposition 5.2]{Li1997}. 
Any irreducible $M_D$-module is obtained in this way. 
Therefore, the following theorem holds.

\begin{theorem}\label{thm:contain_irred_B}
$V_{(\Gamma_{D^0})^\circ}$ contains any irreducible $M_D$-module.
\end{theorem}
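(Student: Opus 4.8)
The plan is to reduce the statement about $M_D$-modules to the already-established Case A results for $M_{D^0}$, using the standard two-step strategy for simple current extensions. First I would invoke the classification from Section~\ref{sec:irred_M_D-modules_A}: since $D^0$ is a $\Z_k$-code in Case A, every irreducible $M_{D^0}$-module $P$ arises as a direct summand of $M_{D^0}\boxtimes_{M_\0}M_{\mu,\nu}$ for a suitable $M_{\mu,\nu}\in\Irr(M_\0)$, and by Theorem~\ref{thm:contain_irred} such a module can be realized inside $V_{(\Gamma_{D^0})^\circ}$. Concretely, Proposition~\ref{prop:irred_M0-mod_in_coset} gives $\eta\in(\Z_k)^\ell$, $\delta\in\{0,1\}^\ell$ with $V_{N(\eta,\delta)}$ containing $M_{\mu,\nu}$, and Proposition~\ref{prop:untwisted_coset} (applied to the code $D^0$, noting $\chi_{M_{\mu,\nu}}|_{D^0}=1$ when $P$ is an untwisted $M_{D^0}$-module) ensures $N(\eta,\delta)\subset(\Gamma_{D^0})^\circ$.

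Next I would use $M_D$ as a $\Z_2$-graded simple current extension $M_D=M_{D^0}\oplus M_{D^1}$ of $M_{D^0}$, with $h(M_{D^1})\in\Z+1/2$ by Theorem~\ref{thm:MD}(2). Given $P\in\Irr(M_D)$, restrict to $M_{D^0}$ and pick an irreducible $M_{D^0}$-submodule; realize that submodule inside $V_{(\Gamma_{D^0})^\circ}$ as above, identifying $P$ with the $M_D$-submodule $M_D\cdot P$ it generates. By Theorem~\ref{thm:induced-module} (the case $D=\Z_2$, $V=M_{D^0}$), $M_D\cdot P\cong M_D\boxtimes_{M_{D^0}}P=P\oplus(M_{D^1}\boxtimes_{M_{D^0}}P)$ as $M_{D^0}$-modules. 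Because $\Gamma_D\subset(\Gamma_D)^\circ\subset(\Gamma_{D^0})^\circ$ and $M_{D^1}\subset V_{\Gamma_{D^1}}\subset V_{(\Gamma_{D^0})^\circ}$, the action of $M_{D^1}\subset V_{\Gamma_D}$ on a vector already sitting in $V_{(\Gamma_{D^0})^\circ}$ produces vectors in $V_{(\Gamma_{D^0})^\circ}$; hence $M_D\cdot P\subset V_{(\Gamma_{D^0})^\circ}$.

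Finally I would account for the two possible shapes of $M_D\cdot P$. If $P$ and $Q:=M_{D^1}\boxtimes_{M_{D^0}}P$ are inequivalent $M_{D^0}$-modules, then $P\oplus Q$ carries a unique $M_D$-module structure extending the $M_{D^0}$-action, and this is the irreducible $M_D$-module we started with. If $P\cong Q$, then $P\oplus Q$ splits as a direct sum of two inequivalent irreducible $M_D$-modules, each isomorphic to $P$ as an $M_{D^0}$-module; this is the situation handled in \cite[Proposition~5.2]{Li1997}, and both summands lie in $V_{(\Gamma_{D^0})^\circ}$ since the whole of $M_D\cdot P$ does. Since every irreducible $M_D$-module restricts to contain some irreducible $M_{D^0}$-module and is thereby recovered as (a summand of) $M_D\cdot P$, we conclude that $V_{(\Gamma_{D^0})^\circ}$ contains every irreducible $M_D$-module. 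The only delicate point is the containment $M_D\cdot P\subset V_{(\Gamma_{D^0})^\circ}$: one must check that $M_{D^1}$, though living in $V_{\Gamma_{D^1}}$ rather than $V_{\Gamma_{D^0}}$, still maps $V_{(\Gamma_{D^0})^\circ}$ into itself—which holds precisely because $\Gamma_{D^1}\subset(\Gamma_{D^0})^\circ$ (a consequence of $(\xi|\eta)\in\{0,k/2\}$ for $\xi\in D^1$, $\eta\in D^0$, via Lemma~\ref{inner_product_cosets} / \eqref{eq:inner_prod_in_GammaD}), so that the lattice $(\Gamma_{D^0})^\circ$ is a module for the whole vertex operator superalgebra $V_{\Gamma_D}$.
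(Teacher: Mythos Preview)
Your proposal is correct and follows essentially the same route as the paper: reduce to the Case~A results for $M_{D^0}$, realize an irreducible $M_{D^0}$-submodule inside $V_{(\Gamma_{D^0})^\circ}$ via Propositions~\ref{prop:irred_M0-mod_in_coset} and~\ref{prop:untwisted_coset}, then observe that $M_D\cdot P\cong M_D\boxtimes_{M_{D^0}}P=P\oplus Q$ stays in $V_{(\Gamma_{D^0})^\circ}$ because $\Gamma_D\subset(\Gamma_D)^\circ\subset(\Gamma_{D^0})^\circ$, and finally split into the two cases $P\not\cong Q$ and $P\cong Q$ citing \cite[Proposition~5.2]{Li1997}. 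Your closing remark on the ``delicate point'' spells out explicitly what the paper compresses into the chain $\Gamma_D\subset(\Gamma_D)^\circ\subset(\Gamma_{D^0})^\circ$; it is a correct and helpful elaboration, though note that the containment $\Gamma_{D^1}\subset(\Gamma_{D^0})^\circ$ already follows from that chain without appealing separately to Lemma~\ref{inner_product_cosets}.
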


\section{Examples}\label{sec:examples}

The vertex operator algebra $M_D$ was previously studied for some small $k$. 
The first one is the case $k = 2$, where $M^0$ is the Virasoro vertex operator 
algebra $L(1/2,0)$ of central charge $1/2$, and its simple currents are 
$M^0$ and $M^1 = L(1/2,1/2)$.  
The next one is the case $k = 3$, where $M^0$ is $L(4/5,0) \oplus L(4/5,3)$,  
and there are three simple currents. 
These cases were discussed in \cite{Miyamoto1996} and \cite{KMY2000}, respectively.  
 
In the case $k = 4$, we have $M^0 = V_{\sqrt{6}\Z}^+$ and $M^2 = V_{\sqrt{6}\Z}^-$. 
So $M_D = V_{\sqrt{6}\Z}$ for $\ell = 1$ and $D = \{ (0), (2) \}$. 
The case $k = 5$ with $\ell = 2$ and $D = \{ (00), (12), (24), (31), (43) \}$, and 
the case $k = 9$ with $\ell = 1$ and $D = \{(0), (3), (6) \}$ were considered  
in Sections 3.5 and 3.9 of \cite{LYY2005}, respectively.

Let $k=6$ with $\ell = 1$ and $D = \{ (0), (3) \}$. 
Then 
\begin{equation*}
M_D = M^0 \oplus M^3 \cong L_{\mathrm{NS}}(5/4,0) \oplus L_{\mathrm{NS}}(5/4,3),
\end{equation*}
where $L_{\mathrm{NS}}(5/4,0)$ is the simple Neveu-Schwarz algebra of central charge $5/4$,  
and $L_{\mathrm{NS}}(5/4,3)$ is its irreducible highest weight module with highest weight $3$, see \cite[Section 4]{AYY2016}, \cite{ZF1985}. 
In fact, let $v$ be an weight $3/2$ element of $M^3$ such that $v_{(2)} v = (5/6)\1$. 
Then $L_n = \omega_{(n+1)}$ and $G_{n - 1/2} = v_{(n)}$,  $n \in \Z$, 
satisfy the relations for the Neveu-Schwarz algebra of central charge $5/4$. 
Thus the subalgebra generated by $\omega$ and $v$ in $V_{\Gamma_D}$ 
is isomorphic to $L_{\mathrm{NS}}(5/4,0)$. 
Moreover, the weight $3$ primary vector $W^3$ of $M^0$ 
is a highest weight vector for $L_{\mathrm{NS}}(5/4,0)$.

\appendix

\section{minimal norm of elements in $N(j,\bsa)$}

In this appendix, we calculate the minimal norm of elements in  
the coset $N(j,\bsa)$ of $N$ in $N^\circ$ defined in \eqref{def:N_j_a}. 
Let $\Omega = \{1,2, \ldots, k\}$, 
and let $\al_S = \sum_{p \in S} \al_p$ for a subset $S$ of $\Omega$.

\begin{theorem}\label{thm:minimal_norm}
Let $\bsa \in \{ 0,1 \}^{k}$ and $0 \le j \le k-1$. 
Set $I = \supp(\bsa)$ and $i = \wt(\bsa)$.

\textup{(1)} If $j < i$, then

\quad \textup{(i)} 
$\min \{ \la \mu,\mu \ra \mid \mu \in N(j,\bsa)\} = ( k i - (i-2j)^2)/2k$,

\quad \textup{(ii)} For $\mu \in N(j,\bsa)$, 
the norm $\la \mu,\mu \ra$ is minimal if and only if 
\begin{equation*}%\label{eq:mu_minimal_1}
  \mu = \frac{1}{2} \al_I - \al_J + \frac{2j-i}{2k} \gamma
\end{equation*}

\qquad for some $J \subset I$ with $\abs{J} = j$. 
There are $\binom{i}{j}$ such $\mu$'s. 

\textup{(2)} If $j \ge i$, then

\quad \textup{(i)} 
$\min \{ \la \mu,\mu \ra \mid \mu \in N(j,\bsa)\} = ( k(k-i) - (k+i-2j)^2)/2k$,

\quad \textup{(ii)} For $\mu \in N(j,\bsa)$, 
the norm $\la \mu,\mu \ra$ is minimal if and only if 
\begin{equation*}%\label{eq:mu_minimal_2}
  \mu = \frac{1}{2} \al_I - \al_J + \frac{2j-i}{2k} \gamma
\end{equation*}

\qquad for some $I \subset J \subset \Omega$ with $\abs{J} = j$.  
There are $\binom{k-i}{j-i}$ such $\mu$'s. 
\end{theorem}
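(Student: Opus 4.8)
The plan is to reduce everything to an explicit minimization over the finitely many ``obvious'' representatives of the coset $N(j,\bsa)$ and then identify which of them have minimal norm. By \eqref{eq:N_j_a-bis}, every element of $N(j,\bsa)$ can be written as $\mu = \frac12\sum_{p\in I}\al_p - j\al_k + \frac{2j-i}{2k}\gamma + n$ with $n \in N$; but it is cleaner to work inside $R^\circ = N^\circ \oplus (\Z\gamma)^\circ$ and note that $N(j,\bsa) + (\Z\gamma + \tfrac{i-2j}{2k}\gamma) = R + \delta_{\bsa} + 2j\lm_k - \tfrac{j}{k}\gamma$ by the displayed identity preceding \eqref{eq:dec_L_coset_in_dual_R}. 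Concretely, a general element of $N(j,\bsa)$ has the form
\[
  \mu \;=\; \tfrac12\al_I \;-\; \al_J \;+\; \tfrac{2j-i}{2k}\,\gamma
\]
for \emph{some} subset $J \subset \Omega$ (this is because $-\al_p = 2\lm_p - \tfrac1k\gamma$, the $\lm_p$ span $N^\circ$ modulo the $\gamma$-direction, and adjusting $J$ by elements of $N$ does not change the coset; one must check that the constraint ``$\mu \in N(j,\bsa)$'' is exactly $|J|\equiv j \pmod k$, using $\gamma = \al_1+\cdots+\al_k$ and $2k\lm_k\in N$). First I would establish this parametrization carefully, since the whole argument rests on it.

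Second, with $\mu$ in that form, compute the norm directly. Writing $J = (J\cap I) \sqcup (J\setminus I)$ and using $\la\al_p,\al_q\ra = 2\dl_{p,q}$ and $\la\gamma,\gamma\ra = 2k$, $\la\al_p,\gamma\ra = 2$, one gets a quadratic expression in the integers $a := |J\cap I|$ and $b := |J\setminus I|$, subject to $0\le a\le i$, $0\le b\le k-i$. Specifically $\la\mu,\mu\ra$ will come out to something of the shape $\tfrac12\big(i - 2a - 2b + \text{(stuff)}\big) + \dots$; I expect after simplification it reduces to a function that, for fixed $|J| = a+b$, is \emph{minimized by taking $a$ as large as possible}, i.e. $a = \min(i,|J|)$, because the ``$\tfrac12\al_I$'' part interacts with $\al_{J\cap I}$ more favorably than with $\al_{J\setminus I}$. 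That monotonicity step is the crux: it forces $J\subset I$ (hence $|J| = j$, giving case (1)) when $j\le i$, and $I\subset J$ (hence $J\setminus I$ has size $j-i$, giving case (2)) when $j\ge i$; the residual freedom is then a choice of a $j$-subset of $I$ (resp. a $(j-i)$-subset of $\Omega\setminus I$), explaining the binomial counts $\binom{i}{j}$ and $\binom{k-i}{j-i}$.

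Third, once $J$ is pinned down to the optimal shape, substitute back to read off the minimal norm: for case (1), with $J\subset I$ and $|J| = j$, a short computation should give $\la\mu,\mu\ra = (ki - (i-2j)^2)/2k$, and symmetrically for case (2) with the substitution $i \mapsto k-i$, $j\mapsto j$ reflecting $\al_I \mapsto \al_\Omega - \al_{\Omega\setminus I}$ together with $\al_\Omega = \gamma$. In fact, I would handle case (2) by the change of variables $\bsa \mapsto \bsa' = (1,\dots,1) - \bsa$, which by Lemma \ref{lem:dual_N_3}(1)(ii) sends $N(j,\bsa)$ to $N(j-i,\bsa')$ with $\wt(\bsa') = k - i$; since $j\ge i$ corresponds to $j - i \ge 0$ and one checks $j - i < k - i$, this is exactly case (1) for $\bsa'$, and the formula $(k(k-i) - (k+i-2j)^2)/2k$ is obtained by substituting $i \rightsquigarrow k-i$, $j\rightsquigarrow j-i$ into $(ki-(i-2j)^2)/2k$. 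This also transports the description of the minimizers and the count $\binom{k-i}{j-i}$.

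The main obstacle I anticipate is the first step — proving that \emph{every} minimal-norm vector is of the form $\tfrac12\al_I - \al_J + \tfrac{2j-i}{2k}\gamma$ with $J$ a genuine subset (as opposed to allowing arbitrary integer coefficients $\al$-wise). One has to argue that any competing representative can be replaced, without increasing the norm, by one of this special shape: decompose an arbitrary $n\in N$ against the $\al_p$-coordinates, observe each coordinate of $\mu$ is of the form $\tfrac12 a_p - (\text{integer})$, and note that rounding each coordinate to the nearest allowed value in $\{-1,0\}$ (for $p\in I$) or $\{0\}$-ish (for $p\notin I$) — while preserving the coset, which constrains $\sum$ of the integer parts mod $k$ — cannot increase $\sum(\text{coord})^2$ by a standard nearest-lattice-point/convexity argument. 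Making this reduction precise, and checking the mod-$k$ bookkeeping survives the rounding, is where the real work lies; the rest is bookkeeping with binomial coefficients and the quadratic form.
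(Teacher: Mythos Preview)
Your plan is essentially the paper's proof. The paper parametrizes a general element of $N(j,\bsa)$ as
\[
  \mu \;=\; \tfrac12\al_I \;-\; \sum_{p=1}^{k} c_p\al_p \;+\; d\gamma,
  \qquad c_p\in\Z,\ \ \sum_p c_p = j,\ \ d = \tfrac{2j-i}{2k},
\]
so that $\la\mu,\mu\ra/2 = \sum_{p\in I}(d+\tfrac12-c_p)^2 + \sum_{q\notin I}(d-c_q)^2$, and then runs a short case split on whether $d\in(-\tfrac12,0)$, $d=0$, $d\in(0,\tfrac12)$, or $d=-\tfrac12$ to see, term by term, that the minimizing integers satisfy $c_p\in\{0,1\}$ with exactly $j$ of them (all chosen from $I$) equal to $1$. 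This case analysis \emph{is} your ``rounding'' step, done in one pass rather than as a preliminary reduction to a subset parametrization; your anticipated obstacle is handled there directly. For case~(2) the paper does exactly what you propose: apply Lemma~\ref{lem:dual_N_3}(1)(ii) with $\bsa' = (1,\dots,1)-\bsa$, $j'=j-i$, and read off the formula and the count from case~(1).

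Two small corrections to your sketch. First, the constraint on the integer coefficients is $\sum_p c_p = j$ as an honest integer equation, not merely $|J|\equiv j\pmod k$; so your ``mod-$k$ bookkeeping surviving the rounding'' is not the issue---rather, the issue is that coordinatewise rounding to the nearest integer gives $\sum c_p = 0$, not $j$, and one must argue that the cheapest way to raise the sum to $j$ is to flip $j$ of the $c_p$ with $p\in I$ from $0$ to $1$ (cost $-2d$ each) rather than touching any $c_q$ with $q\notin I$ (cost $1-2d$ each). The paper's interval analysis encodes exactly this comparison. Second, your initial claim that ``a general element of $N(j,\bsa)$ has the form $\tfrac12\al_I - \al_J + \tfrac{2j-i}{2k}\gamma$ for some subset $J$'' is false as stated (and you correctly retract it in your final paragraph); only the minimizers have this form.
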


\begin{proof}
Any permutation on $\{\al_1, \ldots, \al_{k}\}$ 
induces an isometry on $\Q \otimes_{\Z} L$. 
The isometry fixes $\gamma$ and leaves $L$ invariant. 
Since $\lambda_p = \gamma/2k - \alpha_p/2$ 
and $2\lambda_p \equiv 2\lambda_k \pmod{N}$, 
$1 \le p \le k$,  we may assume that $I = \{1, \ldots, i \}$, that is, 
$a_p = 1$ for $p \le i$, and $a_p = 0$ for $p \ge i+1$ in \eqref{def:N_j_a}.

Let $d = (2j-i)/2k$. Since $\al_p \equiv \al_q \pmod{N}$, $1\le p,q \le k$,  
and since any element of $N$ is of the form $c_1\al_1 + \cdots + c_{k}\al_{k}$ 
for some $c_1, \ldots, c_{k} \in \Z$ with $c_1 + \cdots + c_{k} = 0$, 
we see from \eqref{eq:N_j_a-bis} that any element $\mu \in N(j, \bsa)$ is of the form 
\begin{equation*}%\label{eq:mu}
\begin{split}
\mu 
&= \frac{1}{2}(\al_1 + \cdots + \al_i) - c_1\al_1 - \cdots - c_{k}\al_{k} + d\gamma\\
&= \sum_{p=1}^i  (d + 1/2 - c_p) \al_p + \sum_{q = i+1}^{k} (d - c_q) \al_q
\end{split}
\end{equation*}
for some $c_1, \ldots, c_{k} \in \Z$ with $c_1 + \cdots + c_{k} = j$.
Our concern is the minimum of 
\begin{equation}\label{eq:mu_square_half}
\la \mu, \mu \ra /2
= \sum_{p=1}^i  (d + 1/2 - c_p)^2 + \sum_{q = i+1}^{k} (d - c_q)^2
\end{equation}
for $c_1, \ldots, c_{k} \in \Z$ with $c_1 + \cdots + c_{k} = j$.

We first show the assertion (1). 
Assume that $0 \le j < i \le k$. Then $-1/2 \le d < 1/2$. 
If $d = -1/2$, then $i = k$ and $j = 0$. 
In this case, we have $N(j, \bsa) = N$. 
Clearly, $\min \{ \la \mu,\mu \ra \mid \mu \in N\} = 0$, and 
$\la \mu,\mu \ra = 0$ only if $\mu = 0$. 
Hence the assertion (1) holds in the case $d = -1/2$. 

If $d = 0$, then $i = 2j$, and \eqref{eq:mu_square_half} reduces to 
$\la \mu, \mu \ra /2
= \sum_{p=1}^i  (1/2 - c_p)^2 + \sum_{q = i+1}^{k} {c_q}^2$.
We see that $(1/2 - c_p)^2$ is $1/4$ if $c_p = 0,1$, and $9/4$ if $c_p = -1, 2$. 
Moreover, ${c_q}^2$ is $0$ if $c_q = 0$, and $1$ if $c_q = \pm 1$. 
Hence the minimum of $\la \mu, \mu \ra /2$ 
for $c_1, \ldots, c_{k} \in \Z$ with $c_1 + \cdots + c_{k} = j$ 
is attained only when $j$ of $c_1, \ldots, c_i$ are $1$, the remaining $i-j$ of 
$c_1, \ldots, c_i$ are $0$, and $c_q = 0$ for $i+1 \le q \le k$.
The minimum of $\la \mu, \mu \ra /2$ is $i/4$. 
Thus the assertion (1) holds in the case $d = 0$.

If $-1/2 < d < 0$, then $0 < d + 1/2 < 1/2$. 
In this case, $(d + 1/2 - c_p)^2$ 
belongs to one of the four open intervals $(0, 1/4)$, $(1/4, 1)$, $(1, 9/4)$, or $(9/4, 4)$ 
according as $c_p = 0$, $1$, $-1$, or $2$, respectively. 
Moreover, $(d - c_q)^2$ 
belongs to one of the four open intervals  $(0, 1/4)$, $(1/4, 1)$, $(1, 9/4)$, or $(9/4, 4)$ 
according as $c_q = 0$, $-1$, $1$, or $-2$, respectively. 
Hence the minimum of \eqref{eq:mu_square_half} for 
$c_1, \ldots, c_{k} \in \Z$ with $c_1 + \cdots + c_{j} = j$ 
is attained only when $j$ of $c_1, \ldots, c_i$ are $1$, the remaining $i-j$ of 
$c_1, \ldots, c_i$ are $0$, and $c_q = 0$ for $i+1 \le q \le k$.
The minimum of \eqref{eq:mu_square_half} is 
\[
  (d - 1/2)^2 j + (d+1/2)^2 (i-j) + d^2 (k - i) = i/4 - (i-2j)^2/4k.
\]
Thus the assertion (1) holds in the case $-1/2 < d < 0$. 

If $0 < d < 1/2$, then $1/2 < d + 1/2 < 1$. 
In this case, $(d + 1/2 - c_p)^2$  
belongs to one of the four open intervals $(0, 1/4)$, $(1/4, 1)$, $(1, 9/4)$, or $(9/4, 4)$ 
according as $c_p = 1$, $0$, $2$, or $-1$, respectively. 
Moreover, $(d - c_q)^2$  
belongs to one of the four open intervals $(0, 1/4)$, $(1/4, 1)$, $(1, 9/4)$, or $(9/4, 4)$ 
according as $c_q = 0$, $1$, $-1$, or $2$, respectively. 
Hence the minimum of \eqref{eq:mu_square_half} for 
$c_1, \ldots, c_{k} \in \Z$ with $c_1 + \cdots + c_{k} = j$ 
is attained only when $j$ of $c_1, \ldots, c_i$ are $1$, the remaining $i-j$ of 
$c_1, \ldots, c_i$ are $0$, and $c_q = 0$ for $i+1 \le q \le k$.
Thus the assertion (1) holds in the case $0 < d < 1/2$. 
We have shown that (1) holds for all $0 \le j < i \le k$.

Next, we show the assertion (2). 
Assume that $j \ge i$. We use Lemma \ref{lem:dual_N_3}. 
Let $a_p' = 1 - a_p$, $1 \le p \le k$, $\bsa' = (a'_1, \ldots, a'_{k})$, and $I' = \supp(\bsa')$. 
Then $I \cup I' = \Omega$ and $I \cap I' = \varnothing$. 
Let $i' = \wt(\bsa')$ and $j' = j - i$. 
Then $i' = k - i$ and $0 \le j' < i' \le k$. 
The assertion (1) for $N(j', \bsa')$ implies that

(i)$'$ $\min \{ \la \mu,\mu \ra \mid \mu \in N(j',\bsa')\} 
= (k i' - (i'-2j')^2)/2k$,

(ii)$'$ For $\mu \in N(j',\bsa')$, the norm $\la \mu,\mu \ra$ is minimal if and only if 
\begin{equation}\label{eq:mu_minimal_1}
\mu = \frac{1}{2} \al_{I'} - \al_{J'} + \frac{2j'-i'}{2k} \gamma
\end{equation}

\quad for some $J' \subset I'$ with $|J'| = j'$. 
There are $\binom{i'}{j'}$ such $\mu$'s. 

Since $\al_{I'} = \gamma - \al_I$, and since 
$2j' - i' = 2j - i - k$, the element $\mu$ of \eqref{eq:mu_minimal_1} 
is equal to
\begin{equation*}
\mu = - \frac{1}{2} \al_I - \al_{J'} + \frac{2j-i}{2k} \gamma.
\end{equation*}

The set $\{ J \subset \Omega \mid I \subset J, |J| = j \}$ 
is in one-to-one correspondence with the set 
$\{ J' \subset \Omega - I \mid |J'| = j - i \}$ 
by $J \mapsto J - I$ and $J' \mapsto J' \cup I$. 
Let $J = J' \cup I$. 
Then $\al_J = \al_{J'} + \al_I$, as $J' \cap I = \varnothing$. 
Thus the assertion (2) holds.
\end{proof}

\end{document}